\documentclass[11pt,reqno]{amsart}
\usepackage[a4paper,top=3cm, bottom=3cm,left=3cm, right=3cm, twoside]{geometry}

\usepackage{amsthm}
 \usepackage{amsmath}
 \usepackage{amsfonts}
 \usepackage{amssymb}
 \usepackage{amscd}
 \usepackage[T1]{fontenc}
 \usepackage[utf8x]{inputenc}
 \usepackage[english]{babel}
 \usepackage[mathscr]{eucal}
 \usepackage{lmodern}
 \usepackage{bera}
 \usepackage{float} 
 \usepackage{bold-extra}
 \usepackage{textcomp}
 \usepackage{graphicx}
 \usepackage{caption}
\usepackage{subcaption} 
 \usepackage[dvipsnames,table]{xcolor}
 \usepackage{mathtools} 
 \usepackage{enumitem}
 \usepackage{tikz} 
 \usepackage{tikz-cd}
 \usepackage{wasysym}
 \usepackage{todonotes}
  \usepackage{fancybox}
 \usepackage{fancyhdr}
\usepackage{hyperref}


 \theoremstyle{definition}  
  \newtheorem{definition}{Definition}[section]
  \newtheorem{example}[definition]{Example}

   \newtheorem{note}[definition]{Note}
   \newtheorem{remark}[definition]{Remark}

  \theoremstyle{plain}  
  \newtheorem{theorem}[definition]{Theorem}
  \newtheorem{lemma}[definition]{Lemma}
  \newtheorem{proposition}[definition]{Proposition}
  \newtheorem{corollary}[definition]{Corollary}




 \renewcommand{\it}[1]{\textit{#1}}
 
 \renewcommand{\sf}[1]{\textsf{#1}}

 \newcommand{\mbb}[1]{\mathbb{#1}}
 \newcommand{\mcl}[1]{\mathcal{#1}}

 \newcommand{\msc}[1]{\mathscr{#1}}

 \newcommand{\ol}[1]{\overline{#1}}
 \newcommand{\ul}[1]{\underline{#1}}
 \newcommand{\wtilde}[1]{\widetilde{#1}}

 \newcommand{\norm}[1]{\left\lVert#1\right\rVert}

 \newcommand{\M}[1]{\mbb{M}_{#1}}

 \newcommand{\B}[1]{\msc{B}({#1})}

 \newcommand{\ip}[1]{\langle#1\rangle}

 \newcommand{\ran}[1]{\sf{range}(#1)}
 \newcommand{\clran}[1]{\ol{\sf{range}}(#1)}
 \renewcommand{\ker}[1]{\sf{ker}(#1)}

 \newcommand{\mscriptsize}[1]{{\setlength{\arraycolsep}{.3ex}\text{\scriptsize$#1$}}}

 \newcommand{\Matrix}[1]{\begin{bmatrix}#1\end{bmatrix}}
 \newcommand{\sMatrix}[1]{\mscriptsize{\Matrix{#1}}}

 \DeclareMathOperator{\T}{\sf{T}}

 \DeclareMathOperator{\id}{\sf{id}}

 \DeclareMathOperator{\lspan}{\sf{span}}
 \DeclareMathOperator{\cspan}{\ol{\lspan}}

 \numberwithin{equation}{section}
 \allowdisplaybreaks[4] 
 \setlist[enumerate]{font=\upshape,noitemsep, topsep=0pt} 
 \setlist[itemize]{noitemsep, topsep=0pt}

\makeatletter
\@namedef{subjclassname@2020}{\textup{2020} Mathematics Subject Classification}
\makeatother

\linespread{1.25}

\begin{document}

\title[$C^*$-extreme points]{$C^*$-extreme contractive completely positive maps}

\author{Anand O. R}
\address{Indian Institute of Technology Madras, Department of Mathematics, Chennai, Tamilnadu 600036, India}
\email{anandpunartham@gmail.com, ma20d006@smail.iitm.ac.in}

\author{K. Sumesh}
\address{Indian Institute of Technology Madras, Department of Mathematics, Chennai, Tamilnadu 600036, India}
\email{sumeshkpl@gmail.com, sumeshkpl@iitm.ac.in}

\date{\today}

\begin{abstract} 
  In this paper we generalize a specific quantized convexity structure of the generalized state space of a $C^*$-algebra and examine the associated extreme points. We introduce the notion of $P$-$C^*$-convex subsets, where $P$ is any positive operator on a Hilbert space $\mathcal{H}$. These subsets are defined with in the set of all completely positive (CP) maps from a unital $C^*$-algebra $\mathcal{A}$ into the algebra $\mathscr{B}(\mathcal{H})$ of bounded linear maps on $\mathcal{H}$. In particular, we focus on  certain $P$-$C^*$-convex sets, denoted by $\mathrm{CP}^{(P)}(\mathcal{A},\mathscr{B}(\mathcal{H}))$, and analyze their extreme points with respect to this new convexity structure. This generalizes the existing notions of $C^*$-convex subsets and $C^*$-extreme points of unital completely positive maps. We significantly extend many of the known results regarding the $C^*$-extreme points of  unital completely positive maps into the context of $P$-$C^*$-convex sets we are considering. This includes abstract characterization and structure of $P$-$C^*$-extreme points. Further, we discuss the connection between $P$-$C^*$-extreme points and linear extreme points of these convex sets, as well as Krein-Milman type theorems. Additionally, using these studies, we completely characterize the $C^*$-extreme points of the $C^*$-convex set of all contractive completely positive maps from $\mathcal{A}$ into $\mathscr{B}(\mathcal{H})$, where $\mathcal{H}$ is finite-dimensional.  
\end{abstract}

\keywords{$C^*$-algebra, completely positive map, $C^*$-convexity, $C^*$-extreme point, Krein-Milman theorem}

\subjclass[2020]{46L05, 46L07, 46L30}

\maketitle


\section{Introduction}

 The classical notion of convexity has been generalized to the non-commutative (or quantum) framework over time. Some of these quantum notions include  $C^*$-convexity (\cite{LoPa81,FaMo97}), matrix convexity (\cite{Ost95,EfWi97}), nc-convexity (\cite{DaKe22}) and CP-convexity (\cite{Fuj93}). We focus on $C^*$-convexity,  where the idea is to substitute scalar coefficients in a convex combination with $C^*$-algebra valued coefficients. 
 
 Loebl and Paulsen (\cite{LoPa81}) first defined $C^*$-convexity structure and the $C^*$-extreme points for subsets of $C^*$-algebras. Later, Farenick and Morenz (\cite{FaMo97}) introduced and studied the $C^*$-convexity structure and the $C^*$-extreme points of the convex set $\mathrm{UCP}(\mcl{A},\B{\mcl{H}})$ of all unital completely positive (UCP) maps from a unital $C^*$-algebra $\mcl{A}$ into the $C^*$-algebra $\B{\mcl{H}}$ of all bounded linear operators on a Hilbert space $\mcl{H}$. (In literature, the set $\mathrm{UCP}(\mcl{A},\B{\mcl{H}})$  is referred to as the generalized state space of $\mcl{A}$, and is also denoted as $S_\mcl{H}(\mcl{A})$.) In \cite{FaMo97}, the authors completely described the $C^*$-extreme points of $\mathrm{UCP}(\mcl{A},\B{\mcl{H}})$ when $\mcl{A}$ is either a commutative $C^*$-algebra or a finite dimensional matrix algebra, and $\mcl{H}$ is a finite-dimensional Hilbert space. Subsequently, Farenick and Zhou (\cite{FaZh98}) extended  this work and provided the structure of the $C^*$-extreme points of $\mathrm{UCP}(\mcl{A},\B{\mcl{H}})$, where $\mcl{A}$ is any unital $C^*$-algebra and  $\mcl{H}$ is any finite-dimensional Hilbert space. They demonstrated that in these cases, the $C^*$-extreme points can be expressed as direct sums of pure UCP maps that satisfy certain "nested" properties. In \cite{FaZh98, Zho98}, abstract characterization of $C^*$-extreme points of $\mathrm{UCP}(\mcl{A},\B{\mcl{H}})$ is provided in the general case. 

 When $\mcl{A}$ is a commutative unital $C^*$-algebra and $\mcl{H}$ is any arbitrary Hilbert space, Gregg (\cite{Gre09}) presented necessary conditions for $C^*$-extreme points of $\mathrm{UCP}(\mcl{A},\B{\mcl{H}})$ in relation to positive operator valued measures. Following this approach, Banerjee et. al. (\cite{BBK21}) exploited the connection between UCP maps on commutative unital $C^*$-algebra and positive operator valued measures, and in particular studied $C^*$-extreme points of UCP maps on commutative unital $C^*$-algebra. Recently, Bhat and Kumar (\cite{BhKu22}) extensively studied the structure of $C^*$-extreme points of $\mathrm{UCP}(\mcl{A},\B{\mcl{H}})$ in a general context. They significantly extended a result of \cite{FaZh98} from finite-dimensional to infinite-dimensional Hilbert space set up. Furthermore, they established a connection between $C^*$-extreme points and the factorization property of an associated nest algebra, provided examples of $C^*$-extreme UCP maps and discussed their applications. 

 Across all the studies mentioned above, the fundamental aim has been to understand the structure of the $C^* $-extreme points within the $C^*$-convex set being analyzed, and to find an analogue of the Krein-Milman theorem for $C^*$-convexity under an appropriate topology. The quantum analogue of the Krein-Milman theorem is known to hold for the $C^*$-convex set $\mathrm{UCP}(\mcl{A},\B{\mcl{H}})$ in the following scenarios:
 (i) when $\mcl{A}$ is an arbitrary unital $C^*$-algebra and $\mcl{H}$ is a finite-dimensional Hilbert space (\cite{FaMo97}) 
 (ii) when $\mcl{A}$ is a commutative unital $C^*$-algebra and $\mcl{H}$ is an arbitrary Hilbert space (\cite{BBK21}), and 
 (iii) when $\mcl{A}$ is a separable unital $C^*$-algebra or a type $I$ factor and $\mcl{H}$ is a separable infinite-dimensional Hilbert space (\cite{BhKu22}). The general case has yet to be resolved.

 Several studies have examined $C^*$-convexity in various contexts; for example see \cite{BBK21,BDMS23,BaHo24} and the references listed therein.
 In this article we extend the concepts of $C^*$-convexity and $C^*$-extreme points to what we refer to as $P$-$C^*$-convexity and $P$-$C^*$-extreme points, respectively, where $P\in\B{\mcl{H}}$ is a positive operator. Our main focus is on the $P$-$C^*$-convex set defined by
 \begin{align}\label{eq-CP-P-defn}
        \mathrm{CP}^{(P)}(\mcl{A},\B{\mcl{H}}):=\{\mbox{all CP maps $\Phi:\mcl{A}\to\B{\mcl{H}}$ with $\Phi(1)=P$}\}.
 \end{align}
 Note that $\mathrm{CP}^{(P)}(\mcl{A},\B{\mcl{H}})$ is a convex set, but in general, it is not a $C^*$-convex set. Linear extreme points of $\mathrm{CP}^{(P)}(\mcl{A},\B{\mcl{H}})$ has been studied in \cite{Arv69,Cho75}.  We also consider the the $C^*$-convex set
 \begin{align*}
     \mathrm{CCP}(\mcl{A},\B{\mcl{H}}): =\{\mbox{all contractive CP maps from $\mcl{A}$ into $\B{\mcl{H}}$}\} 
                                        =\bigcup_P\mathrm{CP^{(P)}}(\mcl{A},\B{\mcl{H}}),
 \end{align*}
 where the union is taken over all positive contractions $P\in\B{\mcl{H}}$. Now, by analyzing the $P$-$C^*$-extreme points of $\mathrm{CP}^{(P)}(\mcl{A},\B{\mcl{H}})$  we characterize $C^*$-extreme points of contractive completely positive (CCP) maps in the case where $\mcl{H}$ is finite-dimensional Hilbert space.  
    
    This paper is organized as follows. We start Section \ref{sec-intro} by fixing some notations, defining basic terminologies and discussing some basic results that we require later. In Section \ref{sec-PC-convexity} we introduce the notion of $P$-$C^*$-convex sets and $P$-$C^*$-extreme points of subsets of $\mathrm{CP}(\mcl{A},\B{\mcl{H}})$, the set of all completely positive (CP) maps from $\mcl{A}$ into $\B{\mcl{H}}$. In particular, we concentrate on the sets $\mathrm{CP^{(P)}}(\mcl{A},\B{\mcl{H}})$. The first main theorem of this section is the abstract characterization (Theorem \ref{thm-RNT-CP-P}) of $P$-$C^*$-extreme point of $\mathrm{CP^{(P)}}(\mcl{A},\B{\mcl{H}})$. This theorem is a significant generalization of \cite[theorem 3.1.5]{Zho98}. We prove (Proposition \ref{prop-PC-ext-non-empty}) that $\mathrm{CP^{(P)}}(\mcl{A},\B{\mcl{H}})$ has sufficiently enough linear and $P$-$C^*$-extreme points. We are particularly interested in the case where $\mcl{H}$ is finite-dimensional; in such cases $P$-$C^*$-extreme points of $\mathrm{CP^{(P)}}(\mcl{A},\B{\mcl{H}})$ are linear extreme points as well (Proposition \ref{prop-Cstar-linear-ext-finite}). To understand the structure of $P$-$C^*$-extreme points of $\mathrm{CP^{(P)}}(\mcl{A},\B{\mcl{H}})$ for finite-dimensional $\mcl{H}$, we note that, as stated in Proposition \ref{prop-Phi-Phi0}, it suffices to consider the case where $P\in\B{\mcl{H}}$ is invertible. Furthermore, if $P$ is invertible operator on any Hilbert space $\mcl{H}$, then in Theorem \ref{thm-P-Cext-char}, we establish that $P$-$C^*$-extreme points of $\mathrm{CP}^{(P)}(\mcl{A},\B{\mcl{H}})$ are precisely the invertible conjugates of $C^*$-extreme points of $\mathrm{UCP}(\mcl{A},\B{\mcl{H}})$. Along with the result of Farenick and Zhou, this will completely characterize the structure of $P$-$C^*$-extreme points of $\mathrm{CP}^{(P)}(\mcl{A},\B{\mcl{H}})$ where $\mcl{H}$ is of finite-dimension (see Theorem \ref{thm-P-Cstar-structure}). We also prove a Krein-Milman type theorem for $P$-$C^*$-convexity of $\mathrm{CP^{(P)}}(\mcl{A},\B{\mcl{H}})$. See Theorem \ref{thm-KM-CP-P} and Corollary \ref{cor-KM-CP-P-fd}.  Using the results obtained, we investigate the structure of $C^*$-extreme points of $\mathrm{CCP}(\mcl{A},\B{\mcl{H}})$ in Section \ref{sec-CCP-maps}. Theorem \ref{thm-CCP-C-star-ext} of this section asserts that if $\mcl{H}$ is finite-dimensional, then the $C^*$-extreme points of $\mathrm{CCP}(\mcl{A},\B{\mcl{H}})$ are the union of all $P$-$C^*$-extreme points of $\mathrm{CP^{(P)}}(\mcl{A},\B{\mcl{H}})$, where the union is taken over all projections $P\in\B{\mcl{H}}$. This theorem provides insight into the structure of $C^*$-extreme points of CCP maps (Remark \ref{rmk-CCP-Cstar-structure}).  As an application of this theorem, we prove a Krein-Milman type theorem (Theorem \ref{thm-KMT-CCP}) for $C^*$-convexity of $\mathrm{CCP}(\mcl{A},\B{\mcl{H}})$. Finally, we examine the relation between $C^*$-extreme points and linear extreme points, and in particular prove (Corollary \ref{rmk-CCP-Cstar-linear-ext}) that $C^*$-extreme points of $\mathrm{CCP}(\mcl{A},\B{\mcl{H}})$ are linear extreme when $\mcl{H}$ is finite-dimensional.

\section{Preliminaries and basic results}\label{sec-intro}
    Unless otherwise stated, in this article we assume that $\mcl{A}$ is a unital $C^*$-algebra, $\mcl{H}$ is a complex Hilbert space and $\B{\mcl{H}}$ denote the $C^*$-algebra of all bounded linear operators on $\mcl{H}$. We let $\mcl{A}_+:=\{a^*a: a\in\mcl{A}\}$, the set of \it{positive elements} of $\mcl{A}$; similarly we define $\B{\mcl{H}}_+$. If $\mcl{A}=\B{\mcl{H}}$, then $T\in\B{\mcl{H}}_+$ if and only if $\ip{x,Tx}\geq 0$ for all $x\in\mcl{H}$. (We follow the physicists convention that inner-product is linear in the second variable and anti-linear in the first variable.) A linear map $\Phi:\mcl{A}\to\B{\mcl{H}}$ is said to be positive if $\Phi(\mcl{A}_+)\subseteq\B{\mcl{H}}_+$, and is said to be unital if $\Phi(1)=I$, where $1=1_\mcl{A}\in\mcl{A}$ is the multiplicative identity of $\mcl{A}$, and $I=I_\mcl{H}$ is the identity map on $\mcl{H}$. We say $\Phi$ is a \it{completely positive} (CP) map if
        $\sum_{i,j=1}^nT_i^*\Phi(a_i^*a_j)T_j\in\B{\mcl{H}}_+$
    for any finite subsets $\{a_j\}_{j=1}^n\subseteq\mcl{A}$ and $\{T_j\}_{j=1}^n\subseteq\B{\mcl{H}}$; equivalently, the map $\id_k\otimes\Phi:\M{k}\otimes \mcl{A}\to\M{k}\otimes\B{\mcl{H}}$ is a positive map for all $k\in\mbb{N}$, where $\id_k$ is the identity map on the matrix algebra $\M{k}$ of all $k\times k$ complex matrices. We let $\mathrm{CP}(\mcl{A},\B{\mcl{H}})$ denotes the set of all completely positive maps from $\mcl{A}$ into $\B{\mcl{H}}$, and $\mathrm{UCP}(\mcl{A},\B{\mcl{H}})$ denotes the set of all unital completely positive (UCP) maps from $\mcl{A}$ into $\B{\mcl{H}}$. For the basic theory of $C^*$-algebras we refer to \cite{Mur90}, and for the basic theory of completely positive maps, we refer to \cite{Arv69, Pau02, Bha07}.
    
    The celebrated Stinespring dilation theorem (\cite{Sti55}) states that a linear map $\Phi:\mcl{A}\to\B{\mcl{H}}$ is CP if and only if there exists a triple $(\mcl{K},\pi,V)$, called \it{Stinespring dilation}, consisting of a complex Hilbert space $\mcl{K}$, a representation (i.e., unital $\ast$-homomorphism) $\pi:\mcl{A}\to\B{\mcl{K}}$ and a bounded linear operator $V:\mcl{H}\to\mcl{K}$ satisfying $\Phi(a)=V^*\pi(a)V$ for all $a\in\mcl{A}$. Note that $\Phi$ is unital if and only if $V$ is an isometry. A dilation $(\mcl{K},\pi,V)$ is said to be \it{minimal} if $\mcl{K}=\cspan\{\pi(\mcl{A})V(\mcl{H})\}$, which is unique up to unitary equivalence. If $\mcl{H}=\mbb{C}$, the operator $V$ can be identified with the element $z:=V(1)\in\mcl{K}$, so that the Stinespring dilation reduces to the \it{GNS representation} of the positive linear functional $\Phi:\mcl{A}\to\mbb{C}$. 

    Suppose $\Phi,\Psi\in\mathrm{CP}(\mcl{A},\B{\mcl{H}})$. If $\Phi-\Psi$ is also a CP map, then we say $\Psi$ is \it{dominated by} $\Phi$ and write $\Psi\leq_{cp}\Phi$. The following result can be thought of as a Radon-Nikodym type theorem in the context of CP maps:

\begin{theorem}[{\cite[Theorem 1.4.2]{Arv69}}]\label{thm-Arv-RNT}
    Let $\Phi,\Psi\in\mathrm{CP}(\mcl{A},\B{\mcl{H}})$ and $(\mcl{K},\pi,V)$ be the minimal Stinespring dilation of $\Phi$. Then $\Psi\leq_{cp}\Phi$ if and only if there exists a unique positive contraction $D\in\pi(\mcl{A})'$, the commutant of $\pi(\mcl{A})$ inside $\B{\mcl{K}}$, such that $\Psi(a)=V^*D\pi(a)V$ for all $a\in\mcl{A}$. 
\end{theorem}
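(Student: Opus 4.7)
The plan is to handle the two implications separately. For the \emph{sufficiency}, suppose $D\in\pi(\mcl{A})'$ is a positive contraction with $\Psi(a)=V^*D\pi(a)V$. Then $(\Phi-\Psi)(a)=V^*(I-D)\pi(a)V=W^*\pi(a)W$, where $W:=(I-D)^{1/2}V$, since $(I-D)^{1/2}$ lies in the commutant $\pi(\mcl{A})'$. This exhibits $\Phi-\Psi$ as a CP map through a Stinespring-type factorization, hence $\Psi\leq_{cp}\Phi$.

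The \emph{necessity} is the substantive part. Assuming $\Psi\leq_{cp}\Phi$, I would define a sesquilinear form on the algebraic span of $\pi(\mcl{A})V\mcl{H}$ by
\[
B\!\left(\sum_i\pi(a_i)Vx_i,\,\sum_j\pi(b_j)Vy_j\right):=\sum_{i,j}\ip{x_i,\Psi(a_i^*b_j)y_j}.
\]
The complete positivity of $\Phi-\Psi$ translates directly into the inequality $B(\xi,\xi)\leq\ip{\xi,\xi}_{\mcl{K}}$ on this span, since the inner product on $\mcl{K}$ is by construction the analogous form built from $\Phi$. This single estimate simultaneously delivers the well-definedness of $B$ (any vector that vanishes in $\mcl{K}$ satisfies $B(\xi,\xi)=0$, whence $B(\xi,\cdot)=0$ by Cauchy-Schwarz) and its boundedness by $\|\cdot\|_{\mcl{K}}^2$. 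Minimality of the dilation makes $\pi(\mcl{A})V\mcl{H}$ dense in $\mcl{K}$, so $B$ extends to a bounded positive sesquilinear form on $\mcl{K}$, and Riesz representation produces a positive contraction $D\in\B{\mcl{K}}$ with $\ip{\xi,D\eta}_{\mcl{K}}=B(\xi,\eta)$.

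It remains to verify that $D\in\pi(\mcl{A})'$ and that it implements $\Psi$. For any $c\in\mcl{A}$ and generators $\xi=\pi(a)Vx$, $\eta=\pi(b)Vy$, a direct computation shows that both $\ip{\xi,D\pi(c)\eta}$ and $\ip{\pi(c^*)\xi,D\eta}$ evaluate to $\ip{x,\Psi(a^*cb)y}$, yielding $\pi(c)D=D\pi(c)$. Specializing to $\xi=Vx$, $\eta=\pi(a)Vy$ gives $\Psi(a)=V^*D\pi(a)V$. For uniqueness, if $D'\in\pi(\mcl{A})'$ satisfies the same identity, then on generators $\ip{\pi(a)Vx,D'\pi(b)Vy}=\ip{x,V^*D'\pi(a^*b)Vy}=\ip{x,\Psi(a^*b)y}$ is determined purely by $\Psi$, which forces $D'=D$ on a dense set.

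The principal subtlety is the construction of $B$: converting the inequality of CP maps into an operator inequality on the Stinespring space rests on the observation that $\Psi\leq_{cp}\Phi$ is precisely the domination of the sesquilinear forms on $\mcl{A}\odot\mcl{H}$ induced by $\Psi$ and $\Phi$, which is exactly what makes $B$ both well-defined modulo the kernel of the $\Phi$-form and automatically bounded by the $\Phi$-norm.
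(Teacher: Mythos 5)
Your proof is correct: the paper does not prove this statement but cites it directly from Arveson (\cite[Theorem~1.4.2]{Arv69}), and your argument is essentially the classical one given there — defining the $\Psi$-induced sesquilinear form on the dense span $\pi(\mcl{A})V\mcl{H}$, using $\Psi\leq_{cp}\Phi$ (and complete positivity of $\Psi$ itself, which is what makes the form positive and the Cauchy--Schwarz step legitimate) to obtain well-definedness and the bound by $\norm{\cdot}_{\mcl{K}}^2$, and then extracting the positive contraction $D\in\pi(\mcl{A})'$ with uniqueness via minimality. No gaps; nothing further is needed.
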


 The above theorem holds even when the Stinespring dilation $(\mcl{K},\pi,V)$ is not minimal. However, in such cases, the operator $D\in\pi(\mcl{A})'\subseteq\B{\mcl{K}}$ need not be unique. A map $\Phi\in\mathrm{CP}(\mcl{A},\B{\mcl{H}})$ is said to be \it{pure} if the only CP maps dominated by $\Phi$ are of the form $t\Phi$, where $t\in [0,1]\subseteq\mbb{R}$. Suppose $(\mcl{K},\pi,V)$ is the minimal Stinespring dilation of $\Phi$. Then, by \cite[Corollary 1.4.3]{Arv69}, $\Phi$ is pure if and only if $\pi:\mcl{A}\to\B{\mcl{K}}$ is irreducible (i.e., $\pi(\mcl{A})'=\mbb{C}I$). 
 Given an operator $T\in\B{\mcl{H}}$, consider the map $\mathrm{Ad}_T:\B{\mcl{H}}\to\B{\mcl{H}}$ defined by 
    \begin{align*}
        \mathrm{Ad}_T(X):=T^*XT,\qquad\forall~X\in\B{\mcl{H}}.
    \end{align*}
    Then $\mathrm{Ad}_T$ is a pure CP map.

    Now, let $\msc{C}\subseteq\mathrm{CP}(\mcl{A},\B{\mcl{H}})$ be any non-empty subset. By a \it{$C^*$-convex combination} of  $\Phi_j\in\msc{C},1\leq j\leq n$, we always mean a sum of the form 
       $\sum_{j=1}^n\mathrm{Ad}_{T_j}\circ\Phi_j$,
    where $T_j\in\B{\mcl{H}}$ are such that $\sum_{j=1}^nT_j^*T_j=I$.  Further, if $T_j$'s are invertible  such a sum is called a \it{proper} $C^*$-convex combination. We say $\msc{C}$ is a \it{$C^*$-convex set} if it is closed under $C^*$-convex combinations. Note that 
    $\mathrm{UCP}(\mcl{A},\B{\mcl{H}})$ and $\mathrm{CCP}(\mcl{A},\B{\mcl{H}})$ are non-empty $C^*$-convex sets that are compact in the bounded-weak (BW) topology (see \cite{Pau02, Arv69}). 
    Recall that a CP-map $\Phi:\mcl{A}\to\B{\mcl{H}}$ is contractive (i.e., $\norm{\Phi}\leq 1$) if and only if $\Phi(1)\leq I$.

    Note that a $C^*$-convex set $\msc{C}\subseteq \mathrm{CP}(\mcl{A},\B{\mcl{H}})$ is necessarily a convex set (i.e., $\sum_{j=1}^nt_j\Phi_j\in\msc{C}$, whenever $\Phi_j\in\msc{C}$ and $t_j\in[0,1]\subseteq\mbb{R}, 1\leq j\leq n$ with $\sum_{j=1}^nt_j=1$.). If $\msc{C}\subseteq \mathrm{CP}(\mcl{A},\B{\mcl{H}})$ is a non-empty convex set, then $\Phi\in\msc{C}$ is said to be a \it{(linear) extreme point} of $\msc{C}$ if whenever $\Phi$ is written as a proper convex combination, say $\Phi=\sum_{j=1}^nt_j\Phi_j$ with $\Phi_j\in\msc{C}$ and $t_j\in(0,1)$, then  $\Phi_j=\Phi$ for all $j=1,2,\cdots,n$. 

\begin{definition}\label{defn-C-extr}
   Let $\msc{C}\subseteq \mathrm{CP}(\mcl{A},\B{\mcl{H}})$ be a non-empty $C^*$-convex set. An element $\Phi\in\msc{C}$ is said to be a \it{$C^*$-extreme point} of $\msc{C}$ if whenever $\Phi$ is written as a proper $C^*$-convex combination, say 
        $$\Phi=\sum_{j=1}^n\mathrm{Ad}_{T_j}\circ\Phi_j$$
    with $\Phi_j\in\msc{C}$ and $T_j\in\B{\mcl{H}}$, then each $\Phi_j$ is unitarily equivalent to $\Phi$, i.e.,  there exist unitaries $U_j\in\B{\mcl{H}}$ such that $\Phi_j=\mathrm{Ad}_{U_j}\circ\Phi$ for all $j=1,2,\cdots,n$.
\end{definition}

  Note that any map unitarily equivalent to a $C^*$-extreme point of $\msc{C}$ is also a $C^*$-extreme point of $\msc{C}$. We denote the set of all $C^*$-extreme (respectively, linear-extreme) points of  a $C^*$-convex (respectively, convex) set $\msc{C}$ by $\msc{C}_{C^*-ext}$ (respectively, $\msc{C}_{ext}$). The following result was first observed for the $C^*$-convex set $\mathrm{UCP}(\mcl{A},\B{\mcl{H}})$ by Zhou in \cite{Zho98}. The same proof applies here as well. 
 
\begin{proposition}
    Let $\msc{C}\subseteq\mathrm{CP}(\mcl{A},\B{\mcl{H}})$ be a $C^*$-convex set and $\Phi\in\msc{C}$. Then  $\Phi \in  \msc{C}_{C^*-ext}$ if and only if, whenever $\Phi$ decomposes as
    \begin{align*}
        \Phi=\sum_{j=1}^2\mathrm{Ad}_{T_j}\circ\Phi_j,
    \end{align*}
    for some $\Phi_j\in\msc{C}$ and $T_j\in\B{\mcl{H}}$ invertible with $\sum_{j=1}^2T_j^*T_j=I$, then there exist unitaries $U_j\in\B{\mcl{H}}$ such that $\Phi_j=\mathrm{Ad}_{U_j}\circ\Phi$ for $j=1,2$.
\end{proposition}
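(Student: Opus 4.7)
The forward implication is immediate from the definition. For the converse, the plan is to argue by induction on the number $n$ of summands in a proper $C^*$-convex decomposition of $\Phi$, with the base case $n=2$ being exactly the hypothesis.

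Before the inductive step I would record a small observation: the hypothesized two-term condition is preserved under unitary conjugation. Indeed, if $U\in\B{\mcl{H}}$ is unitary and $\mathrm{Ad}_U\circ\Phi = \mathrm{Ad}_{S_1}\circ\Theta_1 + \mathrm{Ad}_{S_2}\circ\Theta_2$ is a proper two-term decomposition (so $S_j$ invertible and $S_1^*S_1+S_2^*S_2=I$), then conjugating by $U^*$ yields $\Phi = \sum_j \mathrm{Ad}_{S_jU^*}\circ\Theta_j$ with $(S_1U^*)^*(S_1U^*)+(S_2U^*)^*(S_2U^*)=I$, which is still a proper two-term decomposition of $\Phi$. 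Applying the hypothesis to $\Phi$ and composing the resulting unitaries with $U^*$ gives the claim for $\mathrm{Ad}_U\circ\Phi$.

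For the inductive step, suppose $n\geq 3$ and the result holds for $n-1$. Given a proper $C^*$-convex combination $\Phi=\sum_{j=1}^n\mathrm{Ad}_{T_j}\circ\Phi_j$, set $S:=\sum_{j=1}^{n-1}T_j^*T_j$. Since each $T_j$ is invertible, $S\geq T_1^*T_1>0$, so $S$ is positive and invertible, and $S+T_n^*T_n=I$. Define $R_j:=T_jS^{-1/2}$ for $1\leq j\leq n-1$ and $\Psi:=\sum_{j=1}^{n-1}\mathrm{Ad}_{R_j}\circ\Phi_j$. Then the $R_j$'s are invertible, $\sum_{j=1}^{n-1}R_j^*R_j=I$, so $\Psi\in\msc{C}$ is a proper $(n-1)$-term $C^*$-convex combination, and a direct computation gives $\mathrm{Ad}_{S^{1/2}}\circ\Psi=\sum_{j=1}^{n-1}\mathrm{Ad}_{T_j}\circ\Phi_j$. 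Hence
\[
\Phi \;=\; \mathrm{Ad}_{S^{1/2}}\circ\Psi + \mathrm{Ad}_{T_n}\circ\Phi_n
\]
is a proper two-term $C^*$-convex decomposition. The hypothesis produces unitaries $U,U_n\in\B{\mcl{H}}$ with $\Psi=\mathrm{Ad}_U\circ\Phi$ and $\Phi_n=\mathrm{Ad}_{U_n}\circ\Phi$. By the preservation observation, $\Psi$ itself satisfies the two-term condition, so the inductive hypothesis applied to its $(n-1)$-term decomposition yields unitaries $V_j$ with $\Phi_j=\mathrm{Ad}_{V_j}\circ\Psi=\mathrm{Ad}_{V_jU}\circ\Phi$ for $j\leq n-1$, completing the induction.

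The only delicate point I anticipate is the preservation of the two-term condition under unitary conjugation: the inductive hypothesis must be applied to $\Psi$, which is merely unitarily equivalent to $\Phi$, not to $\Phi$ itself. Once this observation is in place, the reduction via the grouping $S=\sum_{j=1}^{n-1}T_j^*T_j$ proceeds cleanly, as invertibility of the $T_j$'s guarantees invertibility of $S$ and hence of $S^{1/2}$, which is what makes the resulting two-term decomposition \emph{proper}.
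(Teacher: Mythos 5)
Your argument is correct and is essentially the same grouping-by-induction strategy the paper itself uses for the analogous $P$-$C^*$ statement (there the tail terms are absorbed into a single summand via an invertible operator; here $S=\sum_{j=1}^{n-1}T_j^*T_j$ is automatically invertible, so no appeal to Douglas' lemma is needed), and your explicit observation that the two-term condition is preserved under unitary conjugation correctly handles the point that the inductive hypothesis is applied to $\Psi$ rather than to $\Phi$. The only blemish is notational: with the convention $\mathrm{Ad}_T(X)=T^*XT$ one has $\mathrm{Ad}_{V_j}\circ\mathrm{Ad}_U=\mathrm{Ad}_{UV_j}$, so the final unitary should be $UV_j$ rather than $V_jU$, which does not affect the conclusion.
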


   
 It is known that $\mathrm{UCP}_{C^*-ext}(\mcl{A,\B{\mcl{H}}})$ is always non-empty; in fact pure UCP maps and unital $\ast$-homomorphisms are both $C^*$-extreme points and linear extreme points (\cite{FaMo97, Sto63}). 
 
\begin{proposition}[\cite{FaMo97}]\label{prop-FaMo}
    If $\dim(\mcl{H})<\infty$, then the following hold:
    \begin{enumerate}[label=(\roman*)]
        \item $\mathrm{UCP}_{C^*-ext}(\mcl{A,\B{\mcl{H}}})\subseteq\mathrm{UCP}_{ext}(\mcl{A,\B{\mcl{H}}})$; 
        \item If $\Phi\in\mathrm{UCP}_{C^*-ext}(\mcl{A,\B{\mcl{H}}})$, then $\Phi$ is unitarily equivalent to a  direct sum of pure CP-maps; 
        \item If $\mcl{A}$ is a commutative  unital  $C^*$-algebra, then $\Phi\in\mathrm{UCP}_{C^*-ext}(\mcl{A,\B{\mcl{H}}})$ if and only if $\Phi$ is a unital $\ast$-homomorphism. 
    \end{enumerate}
\end{proposition}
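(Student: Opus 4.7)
I would prove (ii) first and then derive (iii) and (i) as consequences, since (ii) is the structural heart of the proposition.

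For (ii), the plan is to read the internal structure of $\Phi$ off its minimal Stinespring dilation $(\mcl{K},\pi,V)$, where $V:\mcl{H}\to\mcl{K}$ is an isometry by unitality. If $\pi$ is irreducible, i.e.\ $\pi(\mcl{A})'=\mbb{C} I$, then $\Phi$ is already pure by the criterion stated after Theorem \ref{thm-Arv-RNT} and there is nothing left to prove. Otherwise, select a positive contraction $D\in\pi(\mcl{A})'$ for which both compressions $V^*DV$ and $V^*(I-D)V$ are invertible operators on $\mcl{H}$; finite-dimensionality of $\mcl{H}$ is precisely what allows such a $D$ to be manufactured (one blurs any non-trivial projection in $\pi(\mcl{A})'$ slightly toward the scalar $\frac{1}{2}I$ so that both compressions become strictly positive). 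Setting $D_1=D$, $D_2=I-D$, $T_j=(V^*D_jV)^{1/2}$, and $\Phi_j(a)=T_j^{-1}V^*D_j\pi(a)VT_j^{-1}$ produces UCP maps $\Phi_j$ for which $\Phi=\sum_{j=1}^{2}\mathrm{Ad}_{T_j}\circ\Phi_j$ is a proper $C^*$-convex combination. The $C^*$-extremity of $\Phi$ then yields unitaries $U_j$ with $\Phi_j=\mathrm{Ad}_{U_j}\circ\Phi$, which when translated back through Arveson's Radon-Nikodym correspondence (Theorem \ref{thm-Arv-RNT}) imposes a rigidity condition on the allowed $D$ inside $\pi(\mcl{A})'$. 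Iterating over a maximal orthogonal family of projections in $\pi(\mcl{A})'$ splits the Stinespring data as $\mcl{K}=\bigoplus_i\mcl{K}_i$, $\pi=\bigoplus_i\pi_i$, $V=\bigoplus_iV_i$ with each $\pi_i$ irreducible, so $\Phi$ is unitarily equivalent to $\bigoplus_i V_i^*\pi_i(\cdot)V_i$, a direct sum of pure UCP maps.

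For (iii), I specialize (ii) to commutative $\mcl{A}$: since irreducible representations of a commutative unital $C^*$-algebra are one-dimensional, each $\mcl{K}_i$ above is one-dimensional, and the isometry $V_i:\mcl{H}_i\to\mcl{K}_i$ then forces $\dim\mcl{H}_i=1$; hence each $\Phi_i$ is a character and $\Phi$ is a unital $\ast$-homomorphism. The converse direction is Størmer's classical observation \cite{Sto63} that unital $\ast$-homomorphisms are always $C^*$-extreme. For (i), I rewrite any proper convex decomposition $\Phi=t\Phi_1+(1-t)\Phi_2$ with $t\in(0,1)$ as the proper $C^*$-convex combination $\Phi=\mathrm{Ad}_{\sqrt{t}\,I}\circ\Phi_1+\mathrm{Ad}_{\sqrt{1-t}\,I}\circ\Phi_2$, since the scalar operators $\sqrt{t}\,I$ and $\sqrt{1-t}\,I$ are invertible on the finite-dimensional $\mcl{H}$. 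Applying $C^*$-extremity gives unitaries $U_j$ with $\Phi_j=\mathrm{Ad}_{U_j}\circ\Phi$; combining this with the purity of the summands from (ii) and with the uniqueness in Theorem \ref{thm-Arv-RNT} applied to the dominations $t\Phi_1,(1-t)\Phi_2\leq_{cp}\Phi$ then lets one deduce $U_j^*\Phi U_j=\Phi$, whence $\Phi_1=\Phi_2=\Phi$.

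The main obstacle is the structural step (ii): converting the mere unitary-equivalence output of $C^*$-extremity into a concrete direct-sum decomposition of the Stinespring data is delicate and requires careful bookkeeping through the uniqueness clause of Theorem \ref{thm-Arv-RNT}. Finite-dimensionality of $\mcl{H}$ enters essentially — both to guarantee invertibility of the normalization operators $T_j$ and to keep the commutant $\pi(\mcl{A})'$ tractable enough to be split by orthogonal projections — and its removal is precisely what makes the infinite-dimensional analogues treated in \cite{FaZh98,Zho98,BhKu22} considerably more subtle.
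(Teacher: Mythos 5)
The paper never proves this proposition: it is imported wholesale from \cite{FaMo97} (with the refined structure restated as Theorem \ref{thm-MAIN-FaZh} from \cite{FaZh98}), so there is no internal argument to compare against. Judged on its own terms, your sketch begins correctly — minimal Stinespring dilation, Arveson's Radon--Nikodym theorem, and the perturbation trick for turning a non-scalar $D\in\pi(\mcl{A})'$ into a proper $C^*$-convex decomposition are exactly the tools used both in \cite{FaMo97} and in this paper's own Theorem \ref{thm-RNT-CP-P}; your deduction of (iii) from (ii) via one-dimensionality of irreducible representations, and the converse of (iii) via St\o rmer, are sound. But the two decisive steps are asserted rather than proved, and one of them does not follow as written.

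The gap in (ii): a projection $E\in\pi(\mcl{A})'$ decomposes $\mcl{K}$, not $\mcl{H}$. Writing $V=\sMatrix{V_1\\V_2}$ relative to $\mcl{K}=E\mcl{K}\oplus(I-E)\mcl{K}$ yields $\Phi=V_1^*\pi_1(\cdot)V_1+V_2^*\pi_2(\cdot)V_2$, a \emph{sum} of CP maps, not a direct sum. The assertion ``$V=\bigoplus_iV_i$'' presupposes an orthogonal decomposition of $\mcl{H}$ that $V$ intertwines with the decomposition of $\mcl{K}$, and manufacturing that decomposition of $\mcl{H}$ out of the rigidity condition is the entire content of the theorem --- it is precisely where the nested-compression structure of Theorem \ref{thm-MAIN-FaZh} originates and what occupies most of \cite{FaMo97,FaZh98}. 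Note also that when $V^*EV$ fails to be invertible (the case responsible for repeated summands built on the same $\pi_i$), $C^*$-extremity cannot be applied to $E$ directly, only to its blurred version $\tfrac12 I+\epsilon(E-\tfrac12 I)$, and passing from rigidity of the blurred operator back to structural information about $E$ requires a separate argument you have not supplied. The same criticism applies to (i): once $C^*$-extremity hands you \emph{some} unitaries $U_j$ with $\Phi_j=\mathrm{Ad}_{U_j}\circ\Phi$, the statement $U_j^*\Phi U_j=\Phi$ is exactly what must be proved, and ``purity of the summands plus uniqueness of the Radon--Nikodym derivative lets one deduce it'' is a declaration of intent rather than an argument; this is a genuinely nontrivial step in \cite{FaMo97} (and it is unknown in infinite dimensions, which should be a warning that it cannot be waved through).
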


It is unknown whether statement (i) holds true when $\mcl{H}$ is an arbitrary Hilbert space. In \cite[page 1470]{FaZh98}, an example of a linear extreme point of $\mathrm{UCP}(\M{3},\B{\mbb{C}^4})$ is provided that is not a $C^*$-extreme point. In general, direct sum of pure UCP maps need not be a $C^*$-extreme point of $\mathrm{UCP}(\mcl{A,\B{\mcl{H}}})$ (see \cite[Example 1]{FaMo97}). However, in \cite{FaMo97}, sufficient conditions are provided under which the direct sum of CP maps is a $C^*$-extreme point.  
This direction of investigation was further explored in \cite{BhKu22}. In \cite[Theorem 7.7]{BBK21}, Banerjee et. al. proved that if $\mcl{A}$ is a commutative unital $C^*$-algebra with countable spectrum and $\mcl{H}$ is a separable complex Hilbert space, then $C^*$-extreme points of $\mathrm{UCP}(\mcl{A},\B{\mcl{H}})$ are precisely unital $\ast$-homomorphisms.

    

An abstract characterization of $C^*$-extreme points of $\mathrm{UCP}(\mcl{A,\B{\mcl{H}}})$ is provided in \cite[Theorem 3.1.5]{Zho98}. However, \cite[Corollary 2.5]{BhKu22} points out a minor error in the statement, and provides the following corrected version:
 
 \begin{theorem}\label{thm-Zhou} 
    Given $\Phi\in\mathrm{UCP}(\mcl{A,\B{\mcl{H}}})$ the following are equivalent: 
    \begin{enumerate}[label=(\roman*)]
        \item $\Phi\in\mathrm{UCP}_{C^*-ext}(\mcl{A,\B{\mcl{H}}})$;
        \item For any $\Psi\in\mathrm{CP}(\mcl{A},\B{\mcl{H}})$ with  $\Psi\leq_{cp}\Phi$ and $\Psi(1)$ invertible, there exists invertible operator $Z\in\B{\mcl{H}}$ such that $\Psi=\mathrm{Ad}_Z\circ\Phi$.
    \end{enumerate}
\end{theorem}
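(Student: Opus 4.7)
The plan is to prove both implications by reducing $C^*$-extremality to the two-term proper $C^*$-convex decomposition criterion stated in the proposition immediately preceding the theorem. The main tools will be the Radon--Nikodym type Theorem \ref{thm-Arv-RNT} (used implicitly to handle CP maps dominated by $\Phi$) together with the polar decomposition of invertible operators on $\mcl{H}$.

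For (i) $\Rightarrow$ (ii), I would start with $\Psi \leq_{cp} \Phi$ having $\Psi(1)$ invertible, and seek a proper two-term $C^*$-convex decomposition of $\Phi$ whose first summand recovers $\Psi$. The natural attempt is to set $T_1 = \Psi(1)^{1/2}$, $T_2 = (I-\Psi(1))^{1/2}$, $\Phi_1(a) = T_1^{-1}\Psi(a)T_1^{-1}$, and $\Phi_2(a) = T_2^{-1}(\Phi-\Psi)(a)T_2^{-1}$, which would give $\Phi = \mathrm{Ad}_{T_1}\circ\Phi_1 + \mathrm{Ad}_{T_2}\circ\Phi_2$ with $\Phi_j$ unital and completely positive. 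However, this fails when $\Psi(1)$ has $1$ in its spectrum, since then $T_2$ is not invertible. To bypass this subtlety (precisely the one flagged by \cite{BhKu22}), I would first rescale and replace $\Psi$ by $\Psi' := (1-\epsilon)\Psi$ for some $\epsilon \in (0,1)$. Since $\Psi(1) \leq \Phi(1) = I$, we get $I - \Psi'(1) \geq \epsilon I > 0$, so both square roots are now invertible. Applying $C^*$-extremality via the two-term criterion produces a unitary $U$ with the corresponding $\Phi_1 = \mathrm{Ad}_U\circ\Phi$, hence $\Psi' = \mathrm{Ad}_{UT_1}\circ\Phi$, and finally $\Psi = \mathrm{Ad}_Z\circ\Phi$ for $Z := (1-\epsilon)^{-1/2}UT_1$, which is invertible.

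For (ii) $\Rightarrow$ (i), I would take any proper two-term $C^*$-convex decomposition $\Phi = \mathrm{Ad}_{T_1}\circ\Phi_1 + \mathrm{Ad}_{T_2}\circ\Phi_2$ with each $T_j$ invertible and $\Phi_j \in \mathrm{UCP}(\mcl{A},\B{\mcl{H}})$. Each summand $\Psi_j := \mathrm{Ad}_{T_j}\circ\Phi_j$ lies under $\Phi$ in the CP order and satisfies $\Psi_j(1) = T_j^*T_j$, which is invertible. Applying (ii) yields invertible $Z_j$ with $\Psi_j = \mathrm{Ad}_{Z_j}\circ\Phi$. Evaluating at $1$ forces $Z_j^*Z_j = T_j^*T_j$, and polar decomposition (both $T_j$ and $Z_j$ being invertible) produces a unitary $W_j$ with $Z_j = W_jT_j$. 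Substituting back into $T_j^*\Phi_j(a)T_j = Z_j^*\Phi(a)Z_j$ and canceling the invertible factors $T_j, T_j^*$ on both sides gives $\Phi_j = \mathrm{Ad}_{W_j}\circ\Phi$, which is exactly the $C^*$-extremality condition.

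The main obstacle is controlling the invertibility of $T_2 = (I-\Psi(1))^{1/2}$ in the forward direction; a naive direct decomposition breaks down when the spectrum of $\Psi(1)$ touches $1$, and this is the exact point that required the correction in the original formulation of the theorem. The rescaling trick $\Psi \mapsto (1-\epsilon)\Psi$ circumvents this cleanly, and then all remaining steps reduce to routine manipulations with CP maps and polar decomposition.
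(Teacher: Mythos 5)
Your argument is correct and is essentially the specialization to $P=I$ of the paper's proof of the generalization (Theorem \ref{thm-RNT-CP-P}): the decisive move in both is the same rescaling of $\Psi$ by a factor in $(0,1)$ so that $I-\Psi'(1)\geq\epsilon I$ is invertible (the paper's parameter $t$ and its Lemma \ref{lem-Douglas-cor} play exactly the role of your $1-\epsilon$), followed in the reverse direction by the observation that $Z_j^*Z_j=T_j^*T_j$ forces $Z_jT_j^{-1}$ to be unitary. The only (cosmetic) difference is that you build $\Phi_2$ directly from the CP map $\Phi-\Psi'$ rather than routing through the minimal Stinespring dilation and the Radon--Nikodym operator $D\in\pi(\mcl{A})'$ as the paper does, which slightly shortens the forward implication.
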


 Let $\Phi\in\mathrm{UCP}(\mcl{A},\B{\mcl{H}})$. A unital CP map $\Psi:\mcl{A}\to\B{\mcl{G}}$, where $\mcl{G}$ is a Hilbert space, is said to be a \it{compression} of $\Phi$ if there exists an isometry $W: \mcl{G} \to \mcl{H}$ such that $\Psi= \mathrm{Ad}_W\circ\Phi$. A \it{nested sequence of compressions} of a representation $\pi:\mcl{A}\to\B{\mcl{H}}$  is a sequence $\{\Phi_j\}_j$ of UCP maps $\Phi_j:\mcl{A}\to\B{\mcl{H}_j}$ such that $\Phi_{j+1}$ is a compression of $\Phi_j$, for each $j\geq 1$, and $\Phi_1$ is a compression of $\pi$.

\begin{theorem}[{\cite[Theorem 2.1]{FaZh98}}]\label{thm-MAIN-FaZh}
    Let $\mcl{H}$ be a finite-dimensional Hilbert space, $\mcl{A}$ be a unital $C^*$-algebra and $\Phi \in \mathrm{UCP}(\mcl{A},\B{\mcl{H}})$. Then $\Phi \in \mathrm{UCP}_{C^*-ext}(\mcl{A},\B{\mcl{H}})$ if and only if there exist finitely many pairwise non-equivalent irreducible representations $\pi_1,\pi_2,\dots,\pi_k$ of $\mcl{A}$ and nested sequences of compressions $\Phi_j^{\pi_i}$ $(1\leq j \leq n_i)$ of each representation $\pi_i$ such that $\Phi$ is unitarily equivalent to the direct sum 
    \begin{align*}
        \bigoplus_{i=1}^k \Big(\bigoplus_{j=1}^{n_i} \Phi_j^{\pi_i}\Big)
    \end{align*}
    of pure UCP maps $\Phi_j^{\pi_i}:\mcl{A}\to\B{\mcl{H}_j^i}$. (Here the above direct sum is with respect to the decomposition $\mcl{H}=\oplus_{i=1}^k\oplus_{j=1}^{n_i}\mcl{H}_j^i$.)
\end{theorem}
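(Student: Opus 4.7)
The plan is to prove both directions using Theorem \ref{thm-Zhou} as the essential tool, with Proposition \ref{prop-FaMo}(ii) and Theorem \ref{thm-Arv-RNT} as the other main ingredients. For the forward direction, let $\Phi \in \mathrm{UCP}_{C^*-ext}(\mcl{A},\B{\mcl{H}})$. By Proposition \ref{prop-FaMo}(ii), $\Phi$ is unitarily equivalent to a direct sum of pure UCP maps, each corresponding via its minimal Stinespring dilation to an isometry into the representation space of an irreducible representation of $\mcl{A}$. Grouping the summands by unitary equivalence class of the associated irreducibles, one obtains the decomposition $\Phi \cong \oplus_{i=1}^k \oplus_{j=1}^{n_i} \Phi_j^{\pi_i}$ with the $\pi_i$ pairwise non-equivalent. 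The remaining task, which is the main obstacle, is to show that within each $i$-block the compressions can be arranged to be nested.

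To establish nestedness, fix $i$ and write $\Phi_j^{\pi_i}(a) = (U_j^i)^* \pi_i(a) U_j^i$ for isometries $U_j^i : \mcl{H}_j^i \to \mcl{K}^{\pi_i}$, where $\mcl{K}^{\pi_i}$ is the space of $\pi_i$. Because the commutant $\pi_i(\mcl{A})' = \mbb{C}I$, nestedness $\Phi_{j+1}^{\pi_i} = \mathrm{Ad}_{W} \circ \Phi_j^{\pi_i}$ is equivalent to the range $\mcl{M}_{j+1}^i := U_{j+1}^i(\mcl{H}_{j+1}^i)$ being contained in $\mcl{M}_j^i := U_j^i(\mcl{H}_j^i)$. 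I would order the summands by non-increasing $\dim \mcl{H}_j^i$ and argue by contradiction: if two consecutive ranges fail to be comparable, the orthogonal decomposition of $\mcl{K}^{\pi_i}$ induced by $\mcl{M}_j^i \cap \mcl{M}_{j+1}^i$ and its orthogonal complements in each $\mcl{M}_{\cdot}^i$ lets one build an explicit CP map $\Psi \leq_{cp} \Phi$ with $\Psi(1)$ invertible that cannot be written as $\mathrm{Ad}_Z \circ \Phi$ for any invertible $Z \in \B{\mcl{H}}$, contradicting Theorem \ref{thm-Zhou}. The bookkeeping to turn this geometric picture into an explicit dominated CP map is the delicate step; finite-dimensionality of $\mcl{H}$ is used here to ensure that $\Psi(1)$ can be arranged to be invertible.

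For the reverse direction, assume $\Phi$ has the stated direct sum structure. Because the compressions in each $i$-block are nested, the minimal Stinespring dilation of $\Phi$ is $(\oplus_i \mcl{K}^{\pi_i}, \oplus_i \pi_i, V)$, where $V|_{\mcl{H}_j^i}$ is the composition of the nesting isometries from $\mcl{H}_j^i$ into $\mcl{K}^{\pi_i}$; minimality on each $i$-block follows from irreducibility of $\pi_i$ together with non-triviality of the image of $U_1^i$. Since the $\pi_i$ are pairwise non-equivalent irreducibles, Schur's lemma gives $(\oplus_i \pi_i)(\mcl{A})' = \oplus_i \mbb{C} I_{\mcl{K}^{\pi_i}}$. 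By Theorem \ref{thm-Arv-RNT}, any $\Psi \leq_{cp} \Phi$ therefore has the form $\Psi = \oplus_i \lambda_i \bigl(\oplus_j \Phi_j^{\pi_i}\bigr)$ for scalars $\lambda_i \in [0,1]$. If $\Psi(1)$ is invertible then $\lambda_i > 0$ for all $i$, and $Z := \oplus_i \oplus_j \sqrt{\lambda_i}\, I_{\mcl{H}_j^i}$ is an invertible operator on $\mcl{H}$ satisfying $\Psi = \mathrm{Ad}_Z \circ \Phi$. Theorem \ref{thm-Zhou} then yields that $\Phi$ is $C^*$-extreme, completing the proof.
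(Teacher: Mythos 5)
This statement is quoted in the paper as an external result (Theorem 2.1 of Farenick--Zhou), so there is no internal proof to compare against; I can only assess your argument on its own terms, and it has two genuine gaps.

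In the forward direction, the reduction via Proposition \ref{prop-FaMo}(ii) to a direct sum of pure UCP maps grouped by equivalence class of the associated irreducibles is fine, and your observation that nestedness is equivalent to containment of the ranges $\mcl{M}_j^i=U_j^i(\mcl{H}_j^i)$ is correct. But the entire content of the theorem is the claim that these ranges \emph{can} be arranged into a chain, and you only assert that non-comparable ranges would "let one build" a dominated CP map $\Psi$ with $\Psi(1)$ invertible violating Theorem \ref{thm-Zhou}, explicitly deferring the construction. That construction is the heart of the Farenick--Zhou argument and cannot be waved away as bookkeeping; as written, the forward direction is a plan, not a proof.

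The reverse direction contains an actual error. The map $V$ you describe, sending each $\mcl{H}_j^i$ into a \emph{single} copy of $\mcl{K}^{\pi_i}$, is not an isometry (for $\mcl{A}=\M{2}$, $\pi=\id$, $\Phi=\pi\oplus\Phi_2$ with $\Phi_2(a)=a_{11}$, the vector $V(x,\lambda)=x+\lambda e_1$ does not have norm $\sqrt{\norm{x}^2+|\lambda|^2}$), and $V^*\pi_i(\cdot)V$ acquires nonzero off-diagonal blocks $(U_j^i)^*\pi_i(\cdot)U_{j'}^i$, so $(\oplus_i\mcl{K}^{\pi_i},\oplus_i\pi_i,V)$ is not a Stinespring dilation of $\Phi$ at all. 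The correct minimal dilation of the $i$-th block is $\pi_i^{\oplus n_i}$ on $(\mcl{K}^{\pi_i})^{\oplus n_i}$, whose commutant is $\M{n_i}\otimes I_{\mcl{K}^{\pi_i}}$ rather than $\mbb{C}I$. Consequently, by Theorem \ref{thm-Arv-RNT} the maps $\Psi\leq_{cp}\Phi$ are parametrized by positive contractions $D_i\in\M{n_i}$, not by scalars $\lambda_i$, and the real work of the converse is to show that the nested range structure forces $V^*(\oplus_i D_i\otimes I)\pi(\cdot)V=\mathrm{Ad}_Z\circ\Phi$ for some invertible $Z$ whenever $\Psi(1)$ is invertible. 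In your argument the nesting hypothesis does no work in this direction, which is a sign that the argument proves too much: without it one would conclude that \emph{every} direct sum of pure UCP maps is $C^*$-extreme, contradicting \cite[Example 1]{FaMo97} as cited in the paper.
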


The above theorem was first proved in \cite[Theorem 4.1]{FaMo97} in the special case where $\mcl{A}=\M{n}$. Example 2 of \cite{FaMo97} illustrates that the conditions of the above theorem are no longer necessary for an element of $\mathrm{UCP}_{C^*-ext}(\mcl{A,\B{\mcl{H}}})$ when $\mcl{H}$ is of infinite-dimension. 


\section{\texorpdfstring{$P$-$C^*$}{P-C*}-extreme points}\label{sec-PC-convexity} 

In this section we introduce the concepts of $P$-$C^*$-convex sets and $P$-$C^*$-extreme points, where $P\in\B{\mcl{H}}$ is a positive operator. These definitions remain valid even if we replace $\B{\mcl{H}}$ with any arbitrary $C^*$-algebra.  

\begin{definition}
    Let $P\in\B{\mcl{H}}_+$ and $T_j\in\B{\mcl{H}},1\leq j\leq n$ be such that  $\sum_{j=1}^nT_j^*PT_j=P$. If $\Phi_j\in\mathrm{CP}(\mcl{A},\B{\mcl{H}}),1\leq j\leq n$, then a  sum of the form
    \begin{align*}
        \sum_{j=1}^n\mathrm{Ad}_{T_j}\circ\Phi_j
    \end{align*}
    is called a \it{$P$-$C^*$-convex combination} of $\Phi_j$'s. Such a sum is said to be \it{proper} if $T_j$'s are invertible.
\end{definition}    

      Note that if $P\in\B{\mcl{H}}_+$ is invertible and $S_j\in\B{\mcl{H}},1\leq j\leq n$, are such that $\sum_{j=1}^nS_j^*S_j=I$, then $T_j:=P^{-\frac{1}{2}}S_jP^{\frac{1}{2}}\in\B{\mcl{H}}$ satisfies the identity $\sum_{j=1}^nT_j^*PT_j=P$. Now, if $P\in\B{\mcl{H}}_+$ is arbitrary, the following lemma implies that that given any scalar $t\in(0,1)$ and invertible $X\in\B{\mcl{H}}$ with $X^*PX\leq P$, there always exists an invertible $Y\in\B{\mcl{H}}$ such that $P=t X^*PX+Y^*PY$.
    
 \begin{lemma}\label{lem-Douglas-cor}
    Let $P,Q\in\B{\mcl{H}}$ be such that $0\leq Q \leq P$. Then for any scalar $t\in(0,1)$ there exists invertible $Y\in \B{\mcl{H}}$ such that $P- t Q = Y^*PY$.
\end{lemma}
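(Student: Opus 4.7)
Set $R := P - tQ$. From $0 \leq Q \leq P$ and $t \in (0,1)$ the two-sided bound $(1-t)P \leq R \leq P$ is immediate. The plan is to produce $Y$ via a double application of Douglas' factorization lemma, after first reducing to the case where $P$ is injective. When $P$ is invertible the lemma admits a one-line proof by functional calculus---take $Y := P^{-1/2}(I - tP^{-1/2}QP^{-1/2})^{1/2}P^{1/2}$, noting that $I - tP^{-1/2}QP^{-1/2} \geq (1-t)I > 0$---but this breaks down when $P^{-1/2}$ is only densely defined, and coping with that is the main point of the argument.

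The reduction rests on the kernel identity $\ker{P} = \ker{R}$. The inclusion $\ker{P} \subseteq \ker{R}$ follows because $0 \leq Q \leq P$ forces $\ker{P} \subseteq \ker{Q}$ (from $\|Q^{1/2}x\|^2 = \ip{x, Qx} \leq \ip{x, Px} = \|P^{1/2}x\|^2$), so $Rx = Px - tQx$ vanishes whenever $Px$ does; conversely, $Rx = 0$ together with $(1-t)\ip{x, Px} \leq \ip{x, Rx}$ forces $P^{1/2}x = 0$, hence $Px = 0$. With $\mcl{N} := \ker{P} = \ker{R}$ and $\mcl{M} := \mcl{N}^\perp$, both $P$ and $R$ reduce to block-diagonal form $P_1 \oplus 0$ and $R_1 \oplus 0$, with $P_1, R_1 \in \B{\mcl{M}}$ positive and injective and still satisfying $(1-t)P_1 \leq R_1 \leq P_1$. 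If an invertible $Y_1 \in \B{\mcl{M}}$ with $Y_1^*P_1Y_1 = R_1$ is produced on $\mcl{M}$, then $Y := Y_1 \oplus I_\mcl{N}$ is invertible on $\mcl{H}$ and satisfies $Y^*PY = R$.

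In the injective case, Douglas' lemma applied to $R \leq P$ yields $X \in \B{\mcl{M}}$ with $\|X\| \leq 1$ and $R^{1/2} = P^{1/2}X$; since $R^{1/2}$ is self-adjoint this also gives $R^{1/2} = X^*P^{1/2}$, and multiplying the two yields $R = X^*PX$. Applied to $P \leq (1-t)^{-1}R$, Douglas produces $W \in \B{\mcl{M}}$ with $P^{1/2} = R^{1/2}W$. Substituting one identity into the other gives $P^{1/2}(I_\mcl{M} - XW) = 0$ and $R^{1/2}(I_\mcl{M} - WX) = 0$, and the injectivity of $P^{1/2}$ and $R^{1/2}$ on $\mcl{M}$ (both inherited from injectivity of $P_1$ and $R_1$) forces $XW = WX = I_\mcl{M}$. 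Thus $X$ is bounded invertible with $X^{-1} = W$, and setting $Y_1 := X$ finishes the proof.
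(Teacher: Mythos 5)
Your proof is correct, but it takes a genuinely different route from the paper's. The paper argues in two steps: from $(1-t)P \leq P-tQ \leq P$ and Douglas' lemma it deduces the range equality $\ran{(P-tQ)^{\frac{1}{2}}}=\ran{P^{\frac{1}{2}}}$, and then invokes the Fillmore--Williams/Dixmier theorem (Theorem \ref{thm-Fill-Dix}: two positive operators with the same range differ by an invertible factor) to obtain an invertible $Y$ with $(P-tQ)^{\frac{1}{2}}=P^{\frac{1}{2}}Y$, whence $Y^*PY=P-tQ$. You instead bypass Theorem \ref{thm-Fill-Dix} entirely: after reducing to the injective case via the common kernel $\ker{P}=\ker{P-tQ}$, you apply Douglas' lemma twice --- once to $P-tQ\leq P$ and once to $P\leq (1-t)^{-1}(P-tQ)$ --- and use injectivity of the square roots to show the two Douglas factors are mutual inverses. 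In effect you re-prove, in the special case needed, the "in particular" clause of Theorem \ref{thm-Fill-Dix}. The paper's version is shorter because that theorem is already on record; yours is self-contained, makes the invertibility of the conjugating operator transparent (its inverse is exhibited as the reverse-direction Douglas factor), and would be the natural argument in a setting where one does not wish to cite the operator-range literature. Both proofs ultimately rest on the same two-sided bound $(1-t)P\leq P-tQ\leq P$.
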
 

\begin{proof}
    Since $0\leq (1-t)P \leq P-t Q \leq P$, from Douglas' Lemma (\cite{Dou66}), it follows that $\ran{(P-t Q)^{\frac{1}{2}}}=\ran{P^{\frac{1}{2}}}$. Hence, by Theorem \ref{thm-Fill-Dix}, there exists invertible $Y\in \B{\mcl{H}}$ such that $(P-t Q)^{\frac{1}{2}}= P^{\frac{1}{2}} Y$. Then $Y^*PY = (P^{\frac{1}{2}} Y)^*(P^{\frac{1}{2}} Y)= P-t Q$.
\end{proof}   
    
\begin{definition}
    Let $P\in\B{\mcl{H}}_+$. A non-empty set  $\msc{C}\subseteq\mathrm{CP}(\mcl{A},\B{\mcl{H}})$ that  is closed under any $P$-$C^*$-convex combination is called a \it{$P$-$C^*$-convex set}.
\end{definition}    
    
 Every $P$-$C^*$-convex set is a convex set. (For, let $\msc{C}$ be a $P$-$C^*$-convex set, $\Phi_j\in\msc{C}, t_j\in [0,1],1\leq j\leq n$ be such that $\sum_jt_j=1$. Then letting $T_j=\sqrt{t_j}I$ we have $\sum_jT_j^*PT_j=P$ and hence $\sum_jt_j\Phi_j=\sum_j\mathrm{Ad}_{T_j}\circ\Phi_j\in\msc{C}$.)  If $P=I$, then  a $P$-$C^*$-convex combination and a $P$-$C^*$-convex set reduce to a $C^*$-convex combination and a $C^*$-convex set, respectively.  
    
    Let $P\in\B{\mcl{H}}_+$. We observe that the set $\mathrm{CP}^{(P)}(\mcl{A},\B{\mcl{H}})$, defined as in \eqref{eq-CP-P-defn}, is a non-empty $P$-$C^*$-convex set  that is compact in the BW topology (\cite[Theorem 7.4]{Pau02}).  Clearly,  $\mathrm{CP}^{(I)}(\mcl{A},\B{\mcl{H}})=\mathrm{UCP}(\mcl{A},\B{\mcl{H}})$. 

\begin{definition}\label{defn-P-C-star-ext} 
    Let $P\in\B{\mcl{H}}_+$ and $\Phi\in\mathrm{CP}^{(P)}(\mcl{A},\B{\mcl{H}})$. Then $\Phi$ is said to be a \it{$P$-$C^*$-extreme point} of the $P$-$C^*$-convex set $\mathrm{CP}^{(P)}(\mcl{A},\B{\mcl{H}})$ if whenever
    \begin{align*}
        \Phi=\sum_{j=1}^n\mathrm{Ad}_{T_j}\circ\Phi_j
    \end{align*}
    is a proper $P$-$C^*$-convex combination of $\Phi$ with $\Phi_j\in\mathrm{CP}^{(P)}(\mcl{A},\B{\mcl{H}})$ and $T_j\in\B{\mcl{H}}$, then there exist invertible elements $S_j\in\B{\mcl{H}}$ such that $\Phi_j=\mathrm{Ad}_{S_j}\circ\Phi$ for all $1\leq j\leq n$.   
\end{definition}

Note that since invertible isometries are unitary, the definition of $C^*$-extreme points of UCP maps in Definition \ref{defn-P-C-star-ext}  coincides with the definition provided in \cite{FaMo97}. We let $\mathrm{CP}^{(P)}_{C^*-ext}(\mcl{A},\B{\mcl{H}})$ and $\mathrm{CP}^{(P)}_{ext}(\mcl{A},\B{\mcl{H}})$ denote the set of all $P$-$C^*$-extreme points and linear extreme points, respectively, of $\mathrm{CP}^{(P)}(\mcl{A},\B{\mcl{H}})$. 

 Let $T\in\B{\mcl{H}}$ be invertible such that $T^*PT = P$. Then any $\Phi \in \mathrm{CP}^{(P)}(\mcl{A},\B{\mcl{H}})$ can be written as a proper $P$-$C^*$-convex combination $\Phi=\mathrm{Ad}_{T_1}\circ\Phi_1+\mathrm{Ad}_{T_2}\circ\Phi_2$, where $T_j:= \frac{1}{\sqrt{2}} T^{-1}$ and $\Phi_j:=\mathrm{Ad}_{T_j}\circ\Phi\in\mathrm{CP}^{(P)}(\mcl{A},\B{\mcl{H}})$. This is one of the reason why we take invertible equivalence in the definition of $P$-$C^*$-extreme points. Furthermore, we have the following:

\begin{remark}\label{rmk-PC-ext}
    Let $\Phi \in \mathrm{CP}^{(P)}(\mcl{A},\B{\mcl{H}})$. Then, from the definition of $P$-$C^*$-extreme point, it follows that  $\Phi \in \mathrm{CP}^{(P)}_{C^*-ext}(\mcl{A},\B{\mcl{H}})$ if and only if $\mathrm{Ad}_T\circ\Phi \in \mathrm{CP}^{(P)}_{C^*-ext}(\mcl{A},\B{\mcl{H}})$ for all $T\in\B{\mcl{H}}$ invertible with $T^*PT = P$. 
\end{remark}

\begin{remark}
    Let $P\in\B{\mcl{H}}_+$. Then $\mathrm{CP}^{(P)}(\mcl{A},\B{\mcl{H}})$ is a $C^*$-convex set if and only if $P\in\mbb{C}I$. For, assume that $\mathrm{CP}^{(P)}(\mcl{A},\B{\mcl{H}})$ is a $C^*$-convex set. Choose and fix a map $\Phi\in\mathrm{CP}^{(P)}(\mcl{A},\B{\mcl{H}})$. Then for all unitary $U\in\B{\mcl{H}}$ we must have $\mathrm{Ad}_U\circ\Phi\in\mathrm{CP}^{(P)}(\mcl{A},\B{\mcl{H}})$ so that $PU=UP$. Since unitary elements span the set $\B{\mcl{H}}$ it follows that $P=\lambda I$ for some $\lambda\in\mbb{C}$. Conversely, if $P=\lambda I$ for some $\lambda\in\mbb{C}$ and let $\Phi_j\in\mathrm{CP}^{(P)}(\mcl{A},\B{\mcl{H}}), T_j\in\B{\mcl{H}}$ with $\sum_{j=1}^nT_j^*T_j=I$. Then $\sum_{j=1}^nT_j^*PT_j=P$ so that $\sum_{j=1}^n\mathrm{Ad}_{T_j}\circ\Phi_j\in\mathrm{CP}^{(P)}(\mcl{A},\B{\mcl{H}})$ concluding that $\mathrm{CP}^{(P)}(\mcl{A},\B{\mcl{H}})$ is a $C^*$-convex set.
\end{remark} 

 Suppose $P=\lambda I$ for some $\lambda\in\mbb{C}$. Then, by the definition, the $P$-$C^*$-extreme points and the $C^*$-extreme points of $\mathrm{CP}^{(P)}(\mcl{A},\B{\mcl{H}})$ coincide.  In fact, as we will see later in Lemma \ref{lem-proj-unitary}, when $P$ is a projection, the definition of $P$-$C^*$-extreme points can be reformulated in terms of unitary equivalence. This means that, in such a case, we can choose the $S_j$'s in Definition \ref{defn-P-C-star-ext}   to be unitary operators.

\begin{theorem}[{\cite[Corollary 1]{FiWi71},\cite[Theorem 2.2]{Dix49}}]\label{thm-Fill-Dix} 
    Let $A,B\in\B{\mcl{H}}$. Then there exists an invertible operator $C\in\B{\mcl{H}}$ such that $A=BC$ if and only $\ran{A}=\ran{B}$ and $\ker{A}=\ker{B}$. In particular, two positive operators differ by an invertible operator if and only if they have the same range. 
\end{theorem}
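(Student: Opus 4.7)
The plan is to apply Douglas's factorization lemma in both directions, handling the common kernel separately in order to upgrade a bounded factorization to an invertible one.

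For sufficiency, assume $\ran{A}=\ran{B}$ and $\ker{A}=\ker{B}=:N$. Douglas's lemma supplies bounded $C_0,D_0\in\B{\mcl{H}}$ with $A=BC_0$, $B=AD_0$, $\ker{C_0}=\ker{A}$, $\ker{D_0}=\ker{B}$, and both $\ran{C_0}$ and $\ran{D_0}$ contained in $N^\perp$ (using $\ol{\ran{B^*}}=(\ker{B})^\perp=N^\perp$, and similarly for $A$). Composing yields $A(D_0 C_0-I)=0$, so $(D_0 C_0-I)\mcl{H}\subseteq N$; combined with $\ran{D_0 C_0}\subseteq N^\perp$, this forces $D_0 C_0|_{N^\perp}=I|_{N^\perp}$, and symmetrically $C_0 D_0|_{N^\perp}=I|_{N^\perp}$. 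Hence, in the decomposition $\mcl{H}=N^\perp\oplus N$, the operator $C_0$ takes the block form $\mathrm{diag}(C_0|_{N^\perp},0)$ with $C_0|_{N^\perp}$ a bounded bijection. Setting $C:=C_0+P_N$, where $P_N$ is the orthogonal projection onto $N$, produces an invertible operator in $\B{\mcl{H}}$, and $BC=BC_0+BP_N=A$ since $B$ annihilates $N$.

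For necessity, $A=BC$ with $C$ invertible immediately gives $\ran{A}=B(C\mcl{H})=\ran{B}$, and one extracts the kernel identity by comparing the two factorizations $A=BC$ and $B=AC^{-1}$ and invoking the Douglas uniqueness clause for each. The positive-operator refinement is then immediate: for $P\in\B{\mcl{H}}_+$ one has $\ker{P}=(\ol{\ran{P}})^\perp$, so for positive $A,B$ the range condition automatically implies the kernel condition, and the characterization collapses to a statement about ranges alone.

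I expect the main technical obstacle to be verifying the block-diagonal structure and the bounded invertibility of $C_0|_{N^\perp}$; while morally straightforward, this step requires combining Douglas's uniqueness clause with a closed-graph argument to certify that the putative inverse $D_0|_{N^\perp}$ is indeed bounded and two-sided.
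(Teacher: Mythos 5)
This theorem is quoted from Fillmore--Williams and Dixmier; the paper supplies no proof of its own, so your proposal can only be judged on its own terms. The sufficiency direction --- the only direction the paper ever actually invokes --- is correct and is essentially the standard argument: the two reduced Douglas solutions $C_0$ (for $A=BC_0$) and $D_0$ (for $B=AD_0$) are mutually inverse on $N^\perp$ and vanish on $N=\ker{A}=\ker{B}$, so patching in the identity on $N$ yields an invertible $C$ with $BC=A$. One small simplification: no closed-graph argument is needed to certify that $D_0|_{N^\perp}$ is a bounded two-sided inverse of $C_0|_{N^\perp}$, since $D_0$ is already bounded by Douglas's lemma and the two identities $D_0C_0|_{N^\perp}=I$ and $C_0D_0|_{N^\perp}=I$ that you derive are exactly two-sidedness. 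The positive-operator addendum is also handled correctly, because $\ker{P}=(\ol{\ran{P}})^\perp$ makes the kernel hypothesis automatic in that case.

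The necessity direction is where the proposal breaks down, and the gap is not fixable as stated: the implication ``$A=BC$ with $C$ invertible $\Rightarrow$ $\ker{A}=\ker{B}$'' is false. Indeed $\ker{A}=C^{-1}(\ker{B})$, which need not equal $\ker{B}$: on $\mbb{C}^2$ take $B$ the orthogonal projection onto $\mbb{C}e_1$ and $C$ the unitary interchanging $e_1$ and $e_2$; then $A=BC$ satisfies $\ran{A}=\ran{B}=\mbb{C}e_1$ but $\ker{A}=\mbb{C}e_1\neq\mbb{C}e_2=\ker{B}$. Your plan to ``extract the kernel identity by invoking the Douglas uniqueness clause'' cannot succeed, because that clause governs only the reduced solution (the one with range contained in $\ol{\ran{B^*}}$), and an arbitrary invertible $C$ with $A=BC$ need not be reduced. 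The correct necessary and sufficient pairing with $\ran{A}=\ran{B}$ is $\dim\ker{A}=\dim\ker{B}$, and with that replacement both directions follow from exactly your construction, now patching in an arbitrary bounded invertible map from $\ker{A}$ onto $\ker{B}$. None of this affects the paper: every application of the theorem uses only the sufficiency direction, or concerns positive operators, for which equality of ranges already forces equality of kernels and the statement as given is correct.
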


\begin{lemma}\label{lem-invertible-conjugate-positive-inequality}
    Let $T \in \B{\mcl{H}}$ be invertible and $P \in \B{\mcl{H}}_+$ be such that $T^*PT \leq\beta P$ for some scalar $\beta>0$. Then the following are equivalent:
    \begin{enumerate}[label=(\roman*)]
        \item $\alpha P \leq T^*PT$ for some scalar $\alpha >0$;
        \item $\ran{T^*P^{\frac{1}{2}}} = \ran{P^{\frac{1}{2}}}$; 
        \item $T^*P^{\frac{1}{2}} = P^{\frac{1}{2}} Y$ for some invertible $Y \in \B{\mcl{H}}$.
    \end{enumerate}
\end{lemma}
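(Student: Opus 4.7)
The plan is to reduce all three conditions to range/kernel statements about the operators $A := T^*P^{\frac{1}{2}}$ and $B := P^{\frac{1}{2}}$, and then apply Douglas' lemma together with Theorem \ref{thm-Fill-Dix}. Observe first the two key identities
\[
    AA^* = T^*P^{\frac{1}{2}}P^{\frac{1}{2}}T = T^*PT, \qquad BB^* = P,
\]
so that the standing hypothesis $T^*PT \leq \beta P$ reads $AA^* \leq \beta\, BB^*$, while condition (i) reads $\alpha\, BB^* \leq AA^*$.

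For the equivalence of (i) and (ii), I would invoke Douglas' lemma in its range-inclusion form: for operators $X, Y \in \B{\mcl{H}}$, one has $XX^* \leq \lambda YY^*$ for some $\lambda > 0$ if and only if $\ran{X} \subseteq \ran{Y}$. Applied to $A$ and $B$, the hypothesis $AA^* \leq \beta BB^*$ already gives $\ran{A} \subseteq \ran{B}$. Condition (i), i.e.\ $\alpha BB^* \leq AA^*$, is then equivalent to the reverse inclusion $\ran{B} \subseteq \ran{A}$, and combining the two inclusions yields (ii). The converse direction is identical.

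For the equivalence of (ii) and (iii), I would apply Theorem \ref{thm-Fill-Dix} to the same pair $A = T^*P^{\frac{1}{2}}$ and $B = P^{\frac{1}{2}}$. That theorem produces an invertible $Y\in\B{\mcl{H}}$ with $A = BY$ provided both $\ran{A}=\ran{B}$ and $\ker{A}=\ker{B}$. The range condition is exactly (ii); the kernel condition is \emph{automatic} in our situation because $T$ is invertible, hence so is $T^*$, whence
\[
    \ker(T^*P^{\frac{1}{2}}) = \{x\in\mcl{H} : P^{\frac{1}{2}}x \in \ker T^*\} = \ker(P^{\frac{1}{2}}).
\]
Thus (ii) $\Rightarrow$ (iii). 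The reverse implication (iii) $\Rightarrow$ (ii) is immediate: if $T^*P^{\frac{1}{2}} = P^{\frac{1}{2}}Y$ with $Y$ invertible, then $\ran{T^*P^{\frac{1}{2}}} = P^{\frac{1}{2}}(Y(\mcl{H})) = P^{\frac{1}{2}}(\mcl{H}) = \ran{P^{\frac{1}{2}}}$.

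There is no serious obstacle here; the only point that requires a moment's attention is verifying that the kernel hypothesis of Theorem \ref{thm-Fill-Dix} comes for free, and the invertibility of $T$ takes care of that. The whole argument is essentially a textbook combination of Douglas' lemma with the Fillmore--Williams/Dixmier factorization criterion.
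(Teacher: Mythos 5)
Your proof is correct and follows essentially the same route as the paper: Douglas' lemma handles the range comparisons, and Theorem \ref{thm-Fill-Dix} together with the observation that invertibility of $T^*$ forces $\ker(T^*P^{\frac{1}{2}})=\ker(P^{\frac{1}{2}})$ yields the factorization. The only cosmetic difference is that you close the loop by deducing (i) from (ii) via Douglas' lemma again, whereas the paper closes the cycle (iii) $\Rightarrow$ (i) by the direct computation $T^*PT=P^{\frac{1}{2}}YY^*P^{\frac{1}{2}}\geq\alpha P$ using $YY^*\geq\alpha I$; both are equally valid.
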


\begin{proof}
    $(i) \Rightarrow (ii)$ Since $\alpha P \leq T^*PT \leq \beta P$, by Douglas' Lemma,  $\ran{T^*P^{\frac{1}{2}}} = \ran{P^{\frac{1}{2}}}$. \\ 
     $(ii) \Rightarrow (iii)$
     Assume that $\ran{T^*P^{\frac{1}{2}}} = \ran{P^{\frac{1}{2}}}$. Since $T^*$ is invertible, we have $\ker{T^*P^{\frac{1}{2}}}=\ker{P^{\frac{1}{2}}}$. Then, by Theorem \ref{thm-Fill-Dix}, there exists an invertible $Y \in \B{\mcl{H}}$ such that $T^*P^{\frac{1}{2}} = P^{\frac{1}{2}} Y$.\\
      $(iii) \Rightarrow (i)$
     Suppose $Y \in \B{\mcl{H}}$ invertible is such that $T^*P^{\frac{1}{2}} = P^{\frac{1}{2}} Y$. Since $YY^*$ is positive and invertible,  there exists a scalar $\alpha>0$ such that $YY^* \geq \alpha I$. 
      Then 
    \begin{align*}
        T^*PT = (T^*P^{\frac{1}{2}})(T^*P^{\frac{1}{2}})^* 
              = (P^{\frac{1}{2}}Y)(P^{\frac{1}{2}}Y)^* 
              = P^{\frac{1}{2}}YY^*P^{\frac{1}{2}} 
            \geq P^{\frac{1}{2}}(\alpha I)P^{\frac{1}{2}} 
            = \alpha P.
    \end{align*} 
   This completes the proof.  
\end{proof}

\begin{remark}\label{rem-invertible-conjugate-positive-inequality}
    Let $T,P\in\B{\mcl{H}}$ be as in the above lemma. If $\dim(\mcl{H})<\infty$ or $P$ is invertible, then the assertions of the above lemma hold.  For, if $P$ is invertible then clearly (ii) holds. Now, suppose $\dim(\mcl{H})<\infty$. Since $T^*PT\leq\beta P$, by Douglas' lemma, we have $T^*(\ran{P^{\frac{1}{2}}}) =\ran{T^*P^{\frac{1}{2}}}\subseteq\ran{P^{\frac{1}{2}}}$. Since $T^*$ is injective and $\ran{P^{\frac{1}{2}}}$ is finite-dimensional we must have $T^*(\ran{P^{\frac{1}{2}}})=\ran{P^{\frac{1}{2}}}$.    
\end{remark}

\begin{example}
    Let $\mcl{H} = l^2(\mbb{Z})$ and $P,Q$ be the projections onto $\cspan\{e_n\}_{n\geq0}$ and $\cspan\{e_n\}_{n\geq1}$, respectively. Let $T\in\B{\mcl{H}}$ be the bilateral left shift operator which is invertible. Then $Q=T^*PT \leq P$. But there does not exist any $\beta > 0$ such that $\beta P \leq Q$.
\end{example}

\begin{proposition}
    Let $P\in\B{\mcl{H}}_+$. Suppose either $P$ is invertible or $\dim(\mcl{H})<\infty$. Then the following are equivalent:
    \begin{enumerate}[label=(\roman*)]
        \item $\Phi\in \mathrm{CP}^{(P)}_{C^*-ext}(\mcl{A},\B{\mcl{H}})$;
        \item For any proper $P$-$C^*$-convex combination of $\Phi$, say $\Phi=\sum_{j=1}^2\mathrm{Ad}_{T_j}\circ\Phi_j$, with $\Phi_j\in\mathrm{CP}^{(P)}(\mcl{A},\B{\mcl{H}})$ and $T_j\in\B{\mcl{H}}$,  there exist invertible elements $S_j\in\B{\mcl{H}}$ such that $\Phi_j=\mathrm{Ad}_{S_j}\circ\Phi$ for $j=1,2$.
    \end{enumerate}
\end{proposition}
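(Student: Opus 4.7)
The direction $(i)\Rightarrow(ii)$ is immediate, since any proper 2-term $P$-$C^*$-convex decomposition is already an instance of the general definition (with $n=2$). The substance of the proposition lies in $(ii)\Rightarrow(i)$, whose goal is to show that any proper $P$-$C^*$-convex combination $\Phi=\sum_{j=1}^n\mathrm{Ad}_{T_j}\circ\Phi_j$ forces every $\Phi_j$ to be an invertible conjugate of $\Phi$. The plan is to employ a standard \emph{splitting trick} to recast any such $n$-term decomposition as a proper 2-term decomposition to which (ii) can be applied.

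Fix an index $j_0\in\{1,\ldots,n\}$; I will show $\Phi_{j_0}=\mathrm{Ad}_{S_{j_0}}\circ\Phi$ for some invertible $S_{j_0}$. Choose any scalar $t\in(0,1)$ and, using the identity $\mathrm{Ad}_{T_{j_0}}=\mathrm{Ad}_{\sqrt{t}\,T_{j_0}}+\mathrm{Ad}_{\sqrt{1-t}\,T_{j_0}}$, rewrite
\begin{equation*}
    \Phi = \mathrm{Ad}_{\sqrt{t}\,T_{j_0}}\circ\Phi_{j_0}+\Psi,
    \qquad
    \Psi := \mathrm{Ad}_{\sqrt{1-t}\,T_{j_0}}\circ\Phi_{j_0}+\sum_{i\neq j_0}\mathrm{Ad}_{T_i}\circ\Phi_i.
\end{equation*}
A direct computation yields $\Psi(1) = P - t\,T_{j_0}^*PT_{j_0}$. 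Since $T_{j_0}^*PT_{j_0}\leq P$, Lemma \ref{lem-Douglas-cor} supplies an invertible $Y\in\B{\mcl{H}}$ with $Y^*PY = P - t\,T_{j_0}^*PT_{j_0}$. Setting $\Phi''(a):=(Y^*)^{-1}\Psi(a)Y^{-1}$ yields a CP map satisfying $\Phi''(1)=P$, so $\Phi''\in\mathrm{CP}^{(P)}(\mcl{A},\B{\mcl{H}})$ and $\Psi=\mathrm{Ad}_Y\circ\Phi''$.

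Consequently $\Phi=\mathrm{Ad}_{\sqrt{t}\,T_{j_0}}\circ\Phi_{j_0}+\mathrm{Ad}_Y\circ\Phi''$ is a proper 2-term $P$-$C^*$-convex combination, since both $\sqrt{t}\,T_{j_0}$ and $Y$ are invertible; hypothesis (ii) then produces an invertible $S_{j_0}\in\B{\mcl{H}}$ with $\Phi_{j_0}=\mathrm{Ad}_{S_{j_0}}\circ\Phi$. Because $j_0$ was arbitrary and every $T_j$ is invertible, repeating the argument for each index in turn yields the required invertible $S_j$ for every $j$, establishing $\Phi\in\mathrm{CP}^{(P)}_{C^*-ext}(\mcl{A},\B{\mcl{H}})$. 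The only nontrivial ingredient is the construction of the invertible $Y$, which is precisely what Lemma \ref{lem-Douglas-cor} delivers, so I do not anticipate any further obstacle; the blanket hypothesis on $P$ or $\dim(\mcl{H})$ is carried along from the surrounding section but does not play an active role in this particular reduction.
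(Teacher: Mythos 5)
Your proof is correct, but it takes a genuinely different route from the paper's, and in fact a stronger one. The paper proves $(ii)\Rightarrow(i)$ by induction on $n$: it consolidates the tail $\sum_{j\geq 2}\mathrm{Ad}_{T_j}\circ\Phi_j$ into a single term $\mathrm{Ad}_X\circ\Psi$ with $\Psi\in\mathrm{CP}^{(P)}(\mcl{A},\B{\mcl{H}})$, which requires knowing that $\sum_{j\geq 2}T_j^*PT_j$ and $P$ have the same range; this is exactly where Remark \ref{rem-invertible-conjugate-positive-inequality}, and hence the blanket hypothesis that $P$ is invertible or $\dim(\mcl{H})<\infty$, enters. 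Your splitting trick avoids this entirely: by retaining the $(1-t)$-fraction of the distinguished term inside the remainder $\Psi$, you guarantee $(1-t)P\leq\Psi(1)\leq P$, so Lemma \ref{lem-Douglas-cor} applies with no hypothesis on $P$ or on $\dim(\mcl{H})$, and no induction is needed since each index is handled independently. (This is essentially the same device the paper itself deploys in the proof of Theorem \ref{thm-RNT-CP-P}$(i)\Rightarrow(ii)$.) The upshot is that your argument establishes the equivalence for arbitrary positive $P$ and arbitrary $\mcl{H}$, so your closing remark that the stated hypothesis ``does not play an active role'' is accurate --- your proof shows the proposition holds without it. All the individual steps check out: $\Psi(1)=P-tT_{j_0}^*PT_{j_0}$, the operator $\Phi''=\mathrm{Ad}_{Y^{-1}}\circ\Psi$ lies in $\mathrm{CP}^{(P)}(\mcl{A},\B{\mcl{H}})$, and the resulting two-term decomposition is proper since both $\sqrt{t}\,T_{j_0}$ and $Y$ are invertible.
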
 

\begin{proof}
	To prove the nontrivial implication $(ii) \Rightarrow (i)$, consider a  proper $P$-$C^*$-convex decomposition of $\Phi$, say $\Phi=\sum_{j=1}^n\mathrm{Ad}_{T_j}\circ\Phi_j$, with $T_j\in\B{\mcl{H}}$ and $\Phi_j\in\mathrm{CP}^{(P)}(\mcl{A},\B{\mcl{H}})$. To show that $\Phi_j=\mathrm{Ad}_{S_j}\circ\Phi$ for some invertible $S_j\in\B{\mcl{H}}, 1\leq j\leq n$. We prove by induction on $n\in\mbb{N}$. By assumption, the result is true for $n=2$. Assume that the result is true for $n-1$.  Now to prove the result for  $n$. If $P$ is invertible or $\dim(\mcl{H}) < \infty$, then since $T_j^*PT_j\leq P$, by Remark \ref{rem-invertible-conjugate-positive-inequality}, there exists $\alpha_j>0$ such that $\alpha_j P\leq T_j^*PT_j$ for each $2\leq j\leq n$. Letting $\alpha=\sum_{j=2}^n\alpha_j>0$ we get $\alpha P \leq \sum_{j=2}^n T_j^*PT_j \leq P$. Hence, by Douglas' lemma, $\ran{(\sum_{j=2}^n T_j^*PT_j)^{\frac{1}{2}}} = \ran{P^{\frac{1}{2}}}$. So, by Theorem \ref{thm-Fill-Dix}, there exists an invertible $X \in \B{\mcl{H}}$ such that $(\sum_{j=2}^n T_j^*PT_j)^{\frac{1}{2}} = P^{\frac{1}{2}} X$. Then
    \begin{align*}
        T_1^*PT_1 + X^*PX = T_1^*PT_1 + (P^{\frac{1}{2}}X)^*(P^{\frac{1}{2}}X)
                          =T_1^*PT_1+\sum_{j=2}^n T_j^*PT_j
                         = P.
    \end{align*}
    Now, consider $\Psi := \sum_{j=2}^n\mathrm{Ad}_{T_jX^{-1}}\circ\Phi_j\in \mathrm{CP}^{(P)}(\mcl{A},\B{\mcl{H}})$. Note that $\Phi = \mathrm{Ad}_{T_1}\circ\Phi_1 + \mathrm{Ad}_{X}\circ\Psi$. So by hypothesis $\Phi_1 = \mathrm{Ad}_{S_1}\circ\Phi$ and $\Psi=\mathrm{Ad}_S\circ\Phi$ for some invertible $S,S_1 \in \B{\mcl{H}}$. Then
    \begin{align*}
        \Phi=\mathrm{Ad}_{S^{-1}}\circ\Psi=\sum_{j=2}^n\mathrm{Ad}_{T_jX^{-1}S^{-1}}\circ\Phi_j
    \end{align*}is a proper $P$-$C^*$-convex combination of $\Phi_j$'s and hence, by induction hypothesis, there exist invertible $S_j\in\B{\mcl{H}}$ such that $\Phi_j=\mathrm{Ad}_{S_j}\circ\Phi$ for $2\leq j\leq n$. This completes the proof.  
\end{proof}


 In the following theorem, we generalize Theorem \ref{thm-Zhou} to the context of $\mathrm{CP}^{(P)}(\mcl{A,\B{\mcl{H}}})$ using Zhou's techniques.
 
\begin{theorem}\label{thm-RNT-CP-P} 
    Let $P\in\B{\mcl{H}}_+$ and $\Phi\in\mathrm{CP}^{(P)}(\mcl{A,\B{\mcl{H}}})$. Then the following are equivalent:
    \begin{enumerate}[label=(\roman*)]
        \item $\Phi\in\mathrm{CP}^{(P)}_{C^*-ext}(\mcl{A,\B{\mcl{H}}})$;
        \item For any $\Psi\in\mathrm{CP}(\mcl{A},\B{\mcl{H}})$ with  $\Psi\leq_{cp}\Phi$ and $\Psi(1)=B^*PB$ for some invertible $B\in\B{\mcl{H}}$, there exists invertible operator $Z\in\B{\mcl{H}}$ such that $\Psi=\mathrm{Ad}_Z\circ\Phi$.
    \end{enumerate}
\end{theorem}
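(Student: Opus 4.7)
The plan is to mimic Zhou's strategy from Theorem \ref{thm-Zhou}, but to engineer the $P$-$C^*$-convex decompositions using the weighted Douglas-type Lemma \ref{lem-Douglas-cor}, which is precisely the tool that lets us trade plain scalar convex combinations for $P$-weighted ones.

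For the easy direction $(ii)\Rightarrow(i)$, suppose $\Phi=\sum_{j=1}^n\mathrm{Ad}_{T_j}\circ\Phi_j$ is a proper $P$-$C^*$-convex combination with $\Phi_j\in\mathrm{CP}^{(P)}(\mcl{A},\B{\mcl{H}})$. Each $\Psi_j:=\mathrm{Ad}_{T_j}\circ\Phi_j$ satisfies $\Psi_j\leq_{cp}\Phi$ and $\Psi_j(1)=T_j^*PT_j$ with $T_j$ invertible. Applying (ii) with $B=T_j$ produces invertible $Z_j\in\B{\mcl{H}}$ with $\mathrm{Ad}_{T_j}\circ\Phi_j=\mathrm{Ad}_{Z_j}\circ\Phi$, and then $S_j:=Z_jT_j^{-1}$ is invertible and satisfies $\Phi_j=\mathrm{Ad}_{S_j}\circ\Phi$, as required.

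The main content is in $(i)\Rightarrow(ii)$. Given $\Psi\leq_{cp}\Phi$ with $\Psi(1)=B^*PB$ and $B$ invertible, I will exhibit $\Phi$ as a proper $P$-$C^*$-convex combination of two maps, one of which is a conjugate of $\Psi$ by an invertible operator, and then apply $P$-$C^*$-extremality. Fix any $t\in(0,1)$ (e.g.\ $t=1/2$). Since $0\leq tB^*PB\leq P$, Lemma \ref{lem-Douglas-cor} supplies an invertible $Y\in\B{\mcl{H}}$ with $P-tB^*PB=Y^*PY$. Define
\[
   \Phi_1:=\mathrm{Ad}_{B^{-1}}\circ\Psi,\qquad \Phi_2:=\mathrm{Ad}_{Y^{-1}}\circ(\Phi-t\Psi).
\]
Note $\Phi-t\Psi$ is CP because $\Psi\leq_{cp}\Phi$, and direct computation gives $\Phi_1(1)=P=\Phi_2(1)$, so $\Phi_1,\Phi_2\in\mathrm{CP}^{(P)}(\mcl{A},\B{\mcl{H}})$. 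Setting $T_1:=\sqrt{t}\,B$ and $T_2:=Y$, both invertible, we have $T_1^*PT_1+T_2^*PT_2=tB^*PB+Y^*PY=P$ and
\[
   \mathrm{Ad}_{T_1}\circ\Phi_1+\mathrm{Ad}_{T_2}\circ\Phi_2=t\Psi+(\Phi-t\Psi)=\Phi.
\]
This is a proper $P$-$C^*$-convex combination, so by (i) there exist invertible $S_1,S_2\in\B{\mcl{H}}$ with $\Phi_j=\mathrm{Ad}_{S_j}\circ\Phi$. The first equality $\mathrm{Ad}_{B^{-1}}\circ\Psi=\mathrm{Ad}_{S_1}\circ\Phi$ rearranges to $\Psi=\mathrm{Ad}_{S_1B}\circ\Phi$, so $Z:=S_1B$ is the invertible operator required by (ii).

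The main obstacle I anticipate is (and the reason Zhou's original argument has to be adapted) producing the right two-term decomposition: one has to ensure both summands lie in $\mathrm{CP}^{(P)}(\mcl{A},\B{\mcl{H}})$ while each conjugating operator is invertible and the $P$-weighted identity $\sum T_j^*PT_j=P$ holds. This is exactly where Lemma \ref{lem-Douglas-cor} is essential; the scalar slack $t\in(0,1)$ is needed to guarantee that the residue $P-tB^*PB$ has the same range as $P$, which in turn produces an invertible $Y$ (not merely a contraction). Once this decomposition is in hand, the extraction of $Z$ is immediate.
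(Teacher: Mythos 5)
Your proof is correct and follows essentially the same strategy as the paper: both directions hinge on the same two-term decomposition $\Phi=\mathrm{Ad}_{\sqrt{t}B}\circ(\mathrm{Ad}_{B^{-1}}\circ\Psi)+\mathrm{Ad}_{Y}\circ(\mathrm{Ad}_{Y^{-1}}\circ(\Phi-t\Psi))$, with Lemma \ref{lem-Douglas-cor} supplying the invertible $Y$. The only (welcome) difference is that you verify complete positivity of $\Phi-t\Psi=(\Phi-\Psi)+(1-t)\Psi$ directly, whereas the paper routes the same construction through the minimal Stinespring dilation and the Radon--Nikodym theorem (Theorem \ref{thm-Arv-RNT}), machinery that your argument shows is not actually needed here.
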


\begin{proof}
    $(i)\Rightarrow (ii)$ Suppose $\Phi\in \mathrm{CP}^{(P)}_{C^*-ext}(\mcl{A},\B{\mcl{H}})$ and $(\mcl{K},\pi,V)$ is the minimal Stinespring representation of $\Phi$. Let $\Psi\in\mathrm{CP}(\mcl{A},\B{\mcl{H}})$ be such that $\Psi \leq_{cp} \Phi$ and $\Psi(1)=B^*PB$ for some invertible $B\in\B{\mcl{H}}$. Then, by  Theorem \ref{thm-Arv-RNT}, there exists a positive contraction $D\in\pi(\mcl{A})'\subseteq\B{\mcl{K}}$ such that $\Psi(\cdot)=V^*D\pi(\cdot)V$. Now, let $t\in(0,1)$. Then by Lemma \ref{lem-Douglas-cor}, there exists an invertible operator $C\in\B{\mcl{H}}$ such that  $P-tB^* PB= C^*PC$. Set $S_1=t D$ and $S_2=I-t D$. Then $S_1,S_2\in\pi(\mcl{A})'$ are positive contractions such that $S_1+S_2 = I$, and $V^*S_1V=tB^*PB$ and $V^*S_2V=C^*PC$. Now for $j=1,2$, define $\Phi_j\in\mathrm{CP}(\mcl{A},\B{\mcl{H}})$ by
    \begin{align*}
        \Phi_j(\cdot):=(T_j^{-1})^*V^*S_j\pi(\cdot)VT_j^{-1},
    \end{align*}
    where $T_1=\sqrt{t}B$ and $T_2=C$. Then, $\Phi_j \in \mathrm{CP}^{(P)}(\mcl{A},\B{\mcl{H}})$ and 
    \begin{align*}
        \Phi(\cdot)= V^*(S_1+S_2)\pi(\cdot)V
                        =V^*S_1\pi(\cdot)V+V^*S_2\pi(\cdot)V
                        =\mathrm{Ad}_{T_1}\circ\Phi_1+\mathrm{Ad}_{T_2}\circ\Phi_2
    \end{align*}
    is a proper $P$-$C^*$-convex combination of $\Phi_1$ and $\Phi_2$. Since $\Phi\in\mathrm{CP}^{(P)}_{C^*-ext}(\mcl{A},\B{\mcl{H}})$ there must exists an invertible operator $Y\in\B{\mcl{H}}$ such that 
    \begin{align*}
        \Phi(\cdot)&=Y^*\Phi_1(\cdot)Y 
                   =Y^*({T_1}^{-1})^*V^*S_1\pi(\cdot)V{T_1}^{-1}Y\\
                   &=Y^*(B^{-1})^*V^*D\pi(\cdot)VB^{-1}Y
                   =Y^*(B^{-1})^*\Psi(\cdot)B^{-1}Y.
    \end{align*} 
    Thus $\Psi(\cdot)= \mathrm{Ad}_Z\circ\Phi(\cdot)$, where $Z:=Y^{-1}B\in\B{\mcl{H}}$ is invertible.

    $(ii)\Rightarrow (i)$ Consider a $P$-$C^*$-convex decomposition of $\Phi$, say $\Phi=\sum_{j=1}^n\mathrm{Ad}_{T_j}\circ\Phi_j$ with $\Phi_j\in\mathrm{CP}^{(P)}(\mcl{A},\B{\mcl{H}})$ and $T_j\in\B{\mcl{H}}$ invertible. Then $\Psi_j := \mathrm{Ad}_{T_j}\circ\Phi_j \leq_{cp} \Phi$ and $\Psi_j(1)=T_j^*PT_j$ with $T_j$ invertible. By hypothesis, there exist invertible operators    $Z_j \in \B{\mcl{H}}$ such that $\mathrm{Ad}_{T_j}\circ\Phi_j = \mathrm{Ad}_{Z_j}\circ\Phi$, i.e., $\Phi = \mathrm{Ad}_{T_j{Z_j}^{-1}}\circ\Phi_j$ for all $1\leq j\leq n$. This concludes that $\Phi\in\mathrm{CP}^{(P)}_{C^*-ext}(\mcl{A,\B{\mcl{H}}})$.
\end{proof}

\begin{proposition}\label{prop-PC-ext-non-empty}
    Given $P\in\B{\mcl{H}}_+$ the sets $\mathrm{CP}^{(P)}_{C^*-ext}(\mcl{A},\B{\mcl{H}})$ and $\mathrm{CP}^{(P)}_{ext}(\mcl{A},\B{\mcl{H}})$ are non-empty. In fact, the following hold: 
    \begin{enumerate}[label=(\roman*)]
        \item Let $\psi$ be a pure state on $\mcl{A}$ and $\Phi(\cdot):=\psi(\cdot)P$. Then $\Phi\in\mathrm{CP}^{(P)}_{C^*-ext}(\mcl{A},\B{\mcl{H}})$ and $\Phi\in\mathrm{CP}^{(P)}_{ext}(\mcl{A},\B{\mcl{H}})$. 
        \item If $\Phi:\mcl{A}\to\B{\mcl{H}}$ is any pure CP-map with $\Phi(1)=P$, then $\Phi\in\mathrm{CP}^{(P)}_{C^*-ext}(\mcl{A},\B{\mcl{H}})$ and $\Phi\in\mathrm{CP}^{(P)}_{ext}(\mcl{A},\B{\mcl{H}})$.
    \end{enumerate}
\end{proposition}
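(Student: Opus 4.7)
The plan is to handle part (ii) directly from the definition of a pure CP map using Theorem~\ref{thm-RNT-CP-P}, and to treat part (i) by computing the minimal Stinespring dilation of $\Phi(\cdot)=\psi(\cdot)P$ and applying Theorem~\ref{thm-Arv-RNT} together with the irreducibility of the GNS representation of $\psi$. Throughout one may assume $P\neq 0$, since otherwise $\mathrm{CP}^{(P)}(\mcl{A},\B{\mcl{H}})=\{0\}$ and both claims are vacuous.

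For (ii), suppose $\Phi$ is pure with $\Phi(1)=P$. Any $\Psi\leq_{cp}\Phi$ has the form $\Psi=t\Phi$ with $t\in[0,1]$ by purity. If additionally $\Psi(1)=B^*PB$ for some invertible $B\in\B{\mcl{H}}$, then $tP=B^*PB$; invertibility of $B$ forces $t>0$, so $Z:=\sqrt{t}\,I$ is invertible and $\mathrm{Ad}_Z\circ\Phi=t\Phi=\Psi$. Theorem~\ref{thm-RNT-CP-P} now gives $\Phi\in\mathrm{CP}^{(P)}_{C^*-ext}(\mcl{A},\B{\mcl{H}})$. Linear extremality is equally short: a convex decomposition $\Phi=s\Phi_1+(1-s)\Phi_2$ with $s\in(0,1)$ and $\Phi_j\in\mathrm{CP}^{(P)}(\mcl{A},\B{\mcl{H}})$ gives $s\Phi_1\leq_{cp}\Phi$, hence $s\Phi_1=t\Phi$; evaluating at $1$ forces $t=s$, so $\Phi_1=\Phi$ and similarly $\Phi_2=\Phi$.

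For (i), let $(\mcl{K}_\psi,\pi_\psi,\xi_\psi)$ be the GNS triple of $\psi$, which is irreducible because $\psi$ is pure, and set $\mcl{H}_0:=\ol{\ran{P^{1/2}}}$. The first step is to verify that $(\mcl{K}_\psi\otimes\mcl{H}_0,\,\pi_\psi\otimes I,\,V)$ with $Vh:=\xi_\psi\otimes P^{1/2}h$ is a minimal Stinespring dilation of $\Phi$. The identity $V^*(\pi_\psi(a)\otimes I)V=\psi(a)P$ is a direct computation using $\psi(a)=\ip{\xi_\psi,\pi_\psi(a)\xi_\psi}$, and minimality follows from the cyclicity of $\xi_\psi$ combined with the density of $\ran{P^{1/2}}$ in $\mcl{H}_0$. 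Because $\pi_\psi$ is irreducible, the commutant equals $\mbb{C}I\otimes\B{\mcl{H}_0}$, so Theorem~\ref{thm-Arv-RNT} shows that every $\Psi\leq_{cp}\Phi$ takes the form $\Psi(a)=\psi(a)P^{1/2}D_0P^{1/2}$ for some positive contraction $D_0\in\B{\mcl{H}_0}$. If in addition $\Psi(1)=B^*PB$ with $B$ invertible, then $P^{1/2}D_0P^{1/2}=B^*PB$ and hence $\Psi(a)=\psi(a)B^*PB=\mathrm{Ad}_B\circ\Phi(a)$ for every $a$; Theorem~\ref{thm-RNT-CP-P} then yields the $C^*$-extreme assertion with $Z=B$. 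For linear extremality, the same Radon-Nikodym analysis applied to $s\Phi_1\leq_{cp}\Phi$ gives $s\Phi_1(a)=\psi(a)P^{1/2}D_1P^{1/2}$; evaluating at $1$ forces $P^{1/2}D_1P^{1/2}=sP$, so $\Phi_1(a)=\psi(a)P=\Phi(a)$, and symmetrically $\Phi_2=\Phi$.

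The main obstacle is identifying the minimal Stinespring dilation in (i) correctly and recognising that irreducibility of $\pi_\psi$ is what collapses the commutant to the scalar-tensor form $\mbb{C}I\otimes\B{\mcl{H}_0}$. Once this is in place, both the $C^*$-extreme and linear extreme assertions reduce to the essentially algebraic observation that the operator-valued factor $P^{1/2}D_0P^{1/2}$ inside $\Psi$ is completely determined by its value at the unit, matching the hypothesis $\Psi(1)=B^*PB$ in the $C^*$-extreme case and $\Phi_1(1)=P$ in the linear extreme case.
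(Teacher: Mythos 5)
Your proof is correct and, for the $C^*$-extremality claims, follows essentially the same route as the paper: build a Stinespring dilation of $\psi(\cdot)P$ from the GNS representation of $\psi$, use irreducibility to identify the commutant as $\B{\mcl{H}_0}$ acting on the second tensor factor, and conclude via the Radon--Nikodym theorem and Theorem~\ref{thm-RNT-CP-P} that every dominated map with unit value $B^*PB$ equals $\mathrm{Ad}_B\circ\Phi$. The differences are minor but worth noting. You work with the \emph{minimal} dilation $\mcl{K}_\psi\otimes\ol{\ran{P^{\frac{1}{2}}}}$, so Theorem~\ref{thm-Arv-RNT} applies verbatim, whereas the paper uses the (generally non-minimal) dilation $\mcl{H}\otimes\mcl{K}$ and must invoke the remark that the Radon--Nikodym theorem still holds without minimality; your choice is slightly cleaner. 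More substantively, for the linear extremality in (i) the paper gives a separate argument via the vector states $x\mapsto\ip{x,\Phi_j(\cdot)x}/\ip{x,Px}$ and the scalar purity of $\psi$ (with a case split on whether $\ip{x,Px}=0$), while you reuse the Radon--Nikodym computation: $t_j\Phi_j\leq_{cp}\Phi$ forces $t_j\Phi_j(\cdot)=\psi(\cdot)P^{\frac{1}{2}}D_jP^{\frac{1}{2}}$, and evaluation at $1$ pins down the coefficient. This is more uniform and avoids the case analysis. In (ii) you route through Theorem~\ref{thm-RNT-CP-P} where the paper verifies the definition directly; both are immediate from purity, and your observation that $t>0$ follows from invertibility of $B$ together with $P\neq 0$ correctly closes the only potential degenerate case.
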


\begin{proof}
    $(i)$ Let $(\mcl{K},\pi,z)$ be the minimal GNS representation of $\psi$, i.e., $\mcl{K}$ is a Hilbert space, $\pi:\mcl{A}\to\B{\mcl{K}}$ is a unital $\ast$-homomorphism and $z\in\mcl{K}$ is a unit vector such that 
    \begin{align*}
        \psi(a) = \ip{z,\pi(a)z}=V^*\pi(a)V,\qquad\forall~a\in\mcl{A},
    \end{align*}
    where $V(\lambda):=\lambda z$ for all $\lambda\in\mbb{C}$. Since $\psi$ is pure, $\pi$ must be irreducible.  Define $\widetilde{\pi}:\mcl{A}\to\B{\mcl{H}\otimes\mcl{K}}$ by $\widetilde{\pi}(\cdot):=I_{\mcl{H}}\otimes\pi(\cdot)$, and define $\widetilde{V}:\mcl{H}\otimes\mbb{C}\to\mcl{H}\otimes\mcl{K}$ by $\widetilde{V}:=P^{\frac{1}{2}}\otimes V$. We identify $\mcl{H}=\mcl{H}\otimes\mbb{C}$ via $x\mapsto x\otimes 1$ to see that  $\Phi(\cdot)=\widetilde{V}^*\widetilde{\pi}(\cdot)\widetilde{V}$. Thus  $(\mcl{H}\otimes\mcl{K},\widetilde{\pi},\widetilde{V})$ is a Stinespring dilation of $\Phi$. Now, let $\Psi\in\mathrm{CP}(\mcl{A},\B{\mcl{H}})$ be such that  $\Psi\leq_{cp}\Phi$ and $\Psi(1)=B^*PB$ for some invertible $B\in\B{\mcl{H}}$.  Then, by Theorem \ref{thm-Arv-RNT}, there exists a positive contraction $\widetilde{T}\in {\widetilde{\pi}}(\mcl{A})'\subseteq\B{\mcl{H}\otimes\mcl{K}}$ such that $\Psi(\cdot)=\widetilde{V}^*\widetilde{T}\widetilde{\pi}(\cdot)\widetilde{V}$. Since $\pi(\mcl{A})' = \mbb{C} I_{\mcl{K}}$, from \cite[Theorem IV.5.9]{Tak79}, it follows that  
    \begin{align*}
        \widetilde{\pi}(\mcl{A})'
           &= (I_{\mcl{H}}\otimes_{alg}\pi(\mcl{A}))' 
            = (I_{\mcl{H}}\otimes_{alg}\ol{\pi(\mcl{A})}^{sot})' 
            = (\mbb{C}I_{\mcl{H}}\otimes_{alg}\pi(\mcl{A})'')'  \\
           &= (\mbb{C}I_{\mcl{H}}\ol{\otimes}_{alg}^{sot}\pi(\mcl{A})'')'
            =\B{\mcl{H}}\ol{\otimes}_{alg}^{sot}\pi(\mcl{A})''' 
            = \B{\mcl{H}}\ol{\otimes}_{alg}^{sot}\pi(\mcl{A})'\\
            & = \B{\mcl{H}} \otimes_{alg} I_{\mcl{K}},
    \end{align*}
    where $\otimes_{alg}$ is the algebraic tensor product. So, there exists $T\in\B{\mcl{H}}$ such that $\widetilde{T}= T\otimes I_{\mcl{K}}$. Now, $0 \leq \widetilde{T} \leq I_{\mcl{H}\otimes\mcl{K}}$ implies $0\leq T \leq I_{\mcl{H}}$. Therefore,
    \begin{align*}
        \Psi(\cdot)
            &=(P^{\frac{1}{2}}\otimes V)^*(T\otimes I_\mcl{K})(I_\mcl{H}\otimes\pi(\cdot))(P^{\frac{1}{2}}\otimes V)\\
            &= (P^{\frac{1}{2}}TP^{\frac{1}{2}}) \otimes (V^*\pi(\cdot)V) \\
            &= (P^{\frac{1}{2}}TP^{\frac{1}{2}}) \otimes \psi(\cdot) \\
            &= \psi(\cdot)(P^{\frac{1}{2}}TP^{\frac{1}{2}})\qquad(\because~\mcl{H}=\mcl{H}\otimes\mbb{C}).
    \end{align*}
    Now, $B^*PB=\Psi(1)=P^{\frac{1}{2}}TP^{\frac{1}{2}}$ implies that $\Psi(\cdot)= \psi(\cdot) B^*PB =  \mathrm{Ad}_B\circ \Phi(\cdot)$.
    Hence, by Theorem \ref{thm-RNT-CP-P}, we get $\Phi \in \mathrm{CP}^{(P)}_{C^*-ext}(\mcl{A},\B{\mcl{H}})$.

    Next, we show that $\Phi\in\mathrm{CP}^{(P)}_{ext}(\mcl{A},\B{\mcl{H}})$. So let $\Phi=\sum_{j=1}^n t_j\Phi_j$ be a proper convex decomposition of $\Phi$ with $\Phi_j \in \mathrm{CP}^{(P)}(\mcl{A},\B{\mcl{H}})$ and $t_j\in (0,1)$. We show that $\Phi_j=\Phi$ for all $1\leq j \leq n$ so that $\Phi\in\mathrm{CP}^{(P)}_{ext}(\mcl{A},\B{\mcl{H}})$. So let $x\in\mcl{H}$ and let   $\alpha_x:\B{\mcl{H}}\to\mbb{C}$ be the positive linear functional defined by $\alpha_x(\cdot)=\ip{x,(\cdot)x}$.  \\
    \ul{\sf{Case (1):}} Suppose $\ip{x,Px}=0$, i.e., $\alpha_x(P)=0$. Then $\alpha_x\circ\Phi(1)=0=\alpha_x\circ\Phi_j(1)$, so that the CP-maps $\alpha_x\circ\Phi$ and $\alpha_x\circ\Phi_j$ are identically zero maps for all $1\leq j\leq n$.\\
    \ul{\sf{Case (2):}} Suppose $\ip{x,Px} > 0$. For $1\leq j \leq n$ define the states $\psi_j^x : \mcl{A}\to\mbb{C}$ by 
    \begin{align*}
      \psi_j^x(\cdot) := \frac{\ip{x,\Phi_j(\cdot)x}}{\ip{x,Px}}.
    \end{align*}
    Then $\psi(\cdot) = \sum_{j=1}^n t_j\psi_j^x(\cdot)$ is a proper convex linear combination of the states $\psi_j^x, 1\leq j \leq n$. Since $\psi$ is pure we must have $\psi=\psi_j^x$ for all $1 \leq j \leq n$. Thus, for all $a\in\mcl{A}$,
    \begin{align*}
        \alpha_x\circ\Phi_j(a)
             =\ip{x,\Phi_j(a)x}
             =\psi_j^x(a)\ip{x,Px}
              =\ip{x,\psi(a)Px}
              =\alpha_x\circ\Phi(a).
    \end{align*}
     Thus in both the cases we have
     \begin{align*}
         \ip{x,\Phi(a)x}=\alpha_x\circ\Phi(a)=\alpha_x\circ\Phi_j(a)=\ip{x,\Phi_j(a)x},\qquad\forall~a\in\mcl{A}.
    \end{align*}
     Since $x\in\mcl{H}$ is arbitrary, from the above, we conclude that $\Phi=\Phi_j$ for all $1\leq j \leq n$.  \\
    $(ii)$ If $P=0$, the result follows trivially. So assume $P\neq 0$, and let $\Phi=\sum_{j=1}^n\mathrm{Ad}_{T_j}\circ\Phi_j$ be a proper $P$-$C^*$-convex combination of $\Phi_j \in \mathrm{CP}^{(P)}(\mcl{A},\B{\mcl{H}})$ with $T_j\in\B{\mcl{H}}$. Since $\Phi$ is pure and $\mathrm{Ad}_{T_j}\circ\Phi_j\leq_{cp}\Phi$, there exists  some scalar $s_j\in (0,1]$ such that $\mathrm{Ad}_{T_j}\circ\Phi_j=s_j\Phi$ for all $1\leq j\leq n$. (If $s_j=0$, then $P=0$.) Hence $\Phi_j=\mathrm{Ad}_{\sqrt{s_j}{T_j}^{-1} }\circ\Phi$ for $1\leq j \leq n$. Thus $\Phi \in \mathrm{CP}^{(P)}_{C^*-ext}(\mcl{A},\B{\mcl{H}})$. Now, suppose $\Phi=t\Phi_1 + (1-t)\Phi_2$ is a proper convex combination of $\Phi_j \in \mathrm{CP}^{(P)}(\mcl{A},\B{\mcl{H}})$, where $t\in (0,1)$. Then $\Phi_1\neq 0$ and $t\Phi_1 \leq_{cp} \Phi$ implies that $t\Phi_1=s\Phi$ for some scalar $s\in (0,1]$. In particular, $t P=s P$ so that $t=s$, and consequently $\Phi_1=\Phi$. Similarly we can show $\Phi_2=\Phi$. Hence, $\Phi \in \mathrm{CP}^{(P)}_{ext}(\mcl{A},\B{\mcl{H}})$.
\end{proof}

Next we prove some technical results which are very crucial for later results. 

\begin{lemma}\label{lem-P-P0-Phi-Phi0}
    Let $0\neq P\in\B{\mcl{H}}_+$ with $\ker{P}\neq \{0\}$ and $\Phi \in \mathrm{CP}^{(P)}(\mcl{A},\B{\mcl{H}})$. Then with respect to the decomposition $\mcl{H}=\mcl{H}_0\oplus\mcl{H}_0^\perp$, where $\mcl{H}_0:=\clran{P}$, the maps $P$ and $\Phi$ has block matrix form
    \begin{align}\label{eq-P-P0-Phi-Phi0}
      P=\Matrix{P_0&0\\0&0},
      \qquad\mbox{and}\qquad
      \Phi=\Matrix{\Phi_0&0\\0&0},
    \end{align}
    where $P_0\in\B{\mcl{H}_0}_+$ with $\ker{P_0}=\{0\}$ and $\Phi_0 \in \mathrm{CP}^{(P_0)}(\mcl{A},\B{\mcl{H}_0})$. Moreover, if $\ran{P}$ is closed, then $P_0$ is invertible.  (In particular, if $P$ is projection, then $P_0=I_{\mcl{H}_0}$.)
\end{lemma}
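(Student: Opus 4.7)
The plan is to leverage the Stinespring dilation of $\Phi$ together with the orthogonal decomposition $\mcl{H} = \clran{P} \oplus \ker{P}$ (valid because $P$ is self-adjoint, so $(\clran{P})^\perp = \ker{P^*}=\ker{P}$). The block form of $P$ is then immediate: $P$ annihilates $\mcl{H}_0^\perp=\ker{P}$ and carries $\mcl{H}_0=\clran{P}$ into itself (since the range sits inside $\mcl{H}_0$). The resulting compression $P_0 := P|_{\mcl{H}_0}$ is positive on $\mcl{H}_0$, and injectivity of $P_0$ follows from $\ker{P_0}\subseteq\ker{P}\cap\mcl{H}_0=\{0\}$.

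For $\Phi$, the crucial step is to show that $\Phi(a)$ respects the decomposition $\mcl{H}=\mcl{H}_0\oplus\mcl{H}_0^\perp$ for every $a\in\mcl{A}$. I would invoke a Stinespring dilation $(\mcl{K},\pi,V)$ with $\Phi(a)=V^*\pi(a)V$, so that $V^*V=\Phi(1)=P$. For any $x\in\ker{P}=\mcl{H}_0^\perp$, this gives $\|Vx\|^2=\ip{x,V^*Vx}=\ip{x,Px}=0$, hence $Vx=0$. Therefore $\Phi(a)x=V^*\pi(a)Vx=0$ for all $a\in\mcl{A}$, and for $y\in\mcl{H}_0^\perp$ and any $x\in\mcl{H}$ we likewise get $\ip{y,\Phi(a)x}=\ip{Vy,\pi(a)Vx}=0$. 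This yields the stated $2\times 2$ block form of $\Phi$ with off-diagonal and bottom-right blocks vanishing.

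The map $\Phi_0:\mcl{A}\to\B{\mcl{H}_0}$ obtained as the $(1,1)$-corner is just $\Phi_0(a)=J^*\Phi(a)J$, where $J:\mcl{H}_0\hookrightarrow\mcl{H}$ is the inclusion; as a compression by an isometry of a CP map, $\Phi_0$ is completely positive, and $\Phi_0(1)=J^*PJ=P_0$, so $\Phi_0\in\mathrm{CP}^{(P_0)}(\mcl{A},\B{\mcl{H}_0})$.

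For the moreover clause, assume $\ran{P}$ is closed; then $\mcl{H}_0=\ran{P}$. Writing any $x\in\mcl{H}$ as $x=x_0+x_1$ with $x_0\in\mcl{H}_0$ and $x_1\in\ker{P}$ shows $\ran{P_0}=\ran{P}=\mcl{H}_0$, so $P_0$ is a positive bounded bijection of $\mcl{H}_0$ and hence invertible by the bounded inverse theorem. If $P$ is a projection, then $\ran{P}$ is already closed, and $P_0y=Py=P(Pz)=Pz=y$ for any $y=Pz\in\mcl{H}_0$, so $P_0=I_{\mcl{H}_0}$. There is no real obstacle here; the only subtle point is the passage from $V^*V=P$ to $Vx=0$ on $\ker{P}$, which is the linchpin that forces $\Phi$ into the claimed block form.
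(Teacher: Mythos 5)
Your proof is correct, but it takes a genuinely different route from the paper. The paper works entirely at the level of block matrices: it writes $\Phi=\sMatrix{\Phi_0&\Psi\\\Psi^*&\Phi_1}$ with respect to $\mcl{H}=\mcl{H}_0\oplus\mcl{H}_0^\perp$, observes that the $(2,2)$-corner $\Phi_1$ is a CP map with $\Phi_1(1)=0$ and hence vanishes, and then uses positivity of each $\Phi(a)$, $a\in\mcl{A}_+$ (a positive $2\times2$ operator matrix with a zero diagonal corner has zero off-diagonal entries) to kill $\Psi$. You instead pass to a Stinespring dilation $\Phi=V^*\pi(\cdot)V$ and note that $V^*V=P$ forces $V$ to vanish on $\ker{P}=\mcl{H}_0^\perp$, which annihilates the off-diagonal and corner blocks simultaneously. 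Your argument is slightly more conceptual -- it locates the ``support'' of $\Phi$ inside $\clran{P}$ at the level of the dilation, and the single computation $\norm{Vx}^2=\ip{x,Px}=0$ does all the work -- at the cost of invoking the dilation theorem; the paper's argument is more elementary, using only positivity of $\Phi$ on positive elements and linearity. The remaining steps (complete positivity of the compression $\Phi_0$, $\Phi_0(1)=P_0$, and the ``moreover'' clause via the bounded inverse theorem) agree with the paper in substance.
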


\begin{proof}
    Clearly, with respect to the decomposition $\mcl{H}= \mcl{H}_0 \oplus \mcl{H}_0^\perp$, the operator $P=\Phi(1)$ has the  block matrix form $P=\sMatrix{P_0&0\\0&0}$,
    where $P_0\in\B{\mcl{H}_0}_+$ and $\ker{P_0}=0$. Further, $\Phi$ has the form  $\Phi(\cdot)= \sMatrix{\Phi_{0}(\cdot)&\Psi(\cdot)\\ \Psi^{\ast}(\cdot)& \Phi_{1}(\cdot)}$, where $\Phi_0:\mcl{A}\to\B{\mcl{H}_0},\Phi_1:\mcl{A}\to\B{\mcl{H}_0^\perp}$ are CP-maps and $\Psi: \mcl{A}\to\B{\mcl{H}_0^\perp,\mcl{H}_0}$ and $\Psi^{\ast}:\mcl{A}\to\B{\mcl{H}_0,\mcl{H}_0^\perp}$ are bounded linear maps with $\Psi^{\ast}(a):= {\Psi(a^*)}^*$ for all $a \in \mcl{A}$.  Since $\Phi(1)=\sMatrix{P_0&0\\0&0}$ it follows that $\Phi_0\in \mathrm{CP}^{(P_0)}(\mcl{A},\B{\mcl{H}_0})$ and $\Phi_1\in\mathrm{CP}^{(0)}(\mcl{A},\B{\mcl{H}_0^\perp})=\{0\}$. Consequently, since $\Phi$ is positive, we get $\Psi(a)=0$ for all $a\in\mcl{A}_+$. Thus $\Psi=\Psi^*=0$, so that $\Phi$ has the block matrix form  
    \begin{align*}
     \Phi(a):=\Matrix{\Phi_0(a)&0\\0&0},\qquad\forall~a\in\mcl{A}. 
    \end{align*}
    Note that if $\ran{P}$ is closed, then $\ran{P_0}$ is closed so that $\ran{P_0}=\mcl{H}_0$. Thus $P_0$ is bijective and hence invertible.  This completes the proof. 
\end{proof}

\begin{lemma}\label{lem-P-S-decomp}
    Let $0\neq P\in\B{\mcl{H}}_+$ with $\ker{P}\neq \{0\}$ and let $\mcl{H}_0:=\clran{P}$. If $S\in\B{\mcl{H}}$ is an invertible operator such that $\alpha P\leq S^*PS\leq \beta P$ for some scalars $\alpha,\beta\in (0,\infty)$, then with respect to the decomposition $\mcl{H}=\mcl{H}_0\oplus\mcl{H}_0^\perp$ the operator $S$ has the block form $S=\sMatrix{S_1&0\\S_2&S_3}$ with $S_1\in\B{\mcl{H}_0}$ invertible. 
\end{lemma}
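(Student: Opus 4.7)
The plan is to prove this in two stages: first establish the claimed block-triangular form, then verify that the diagonal block $S_1$ is invertible. Observe that since $P$ is self-adjoint, $\mcl{H}_0^\perp=\ker{P}$, so the block form (zero in the $(1,2)$ entry) is equivalent to the $S$-invariance of $\mcl{H}_0^\perp$.

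For this invariance I would invoke only the upper bound $S^*PS\leq\beta P$: for $x\in\ker{P}$ one has $0\leq\ip{Sx,PSx}=\ip{x,S^*PSx}\leq\beta\ip{x,Px}=0$, which forces $P^{\frac{1}{2}}Sx=0$ and hence $Sx\in\ker{P^{\frac{1}{2}}}=\ker{P}$. This immediately yields $S=\sMatrix{S_1&0\\S_2&S_3}$. The key trick for invertibility of $S_1$ is to run the same argument on $S^{-1}$: substituting $x=S^{-1}y$ into $\alpha P\leq S^*PS$ gives the quadratic-form inequality $(S^{-1})^*PS^{-1}\leq\alpha^{-1}P$, and the reasoning above (now applied to $S^{-1}$) produces $S^{-1}(\mcl{H}_0^\perp)\subseteq\mcl{H}_0^\perp$. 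In other words, $S^{-1}$ inherits the same block-triangular form.

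To conclude, I would deduce bijectivity of $S_1\in\B{\mcl{H}_0}$ from these two invariance facts. For surjectivity, given $x_0\in\mcl{H}_0$, decompose $S^{-1}x_0=y_0+y_1\in\mcl{H}_0\oplus\mcl{H}_0^\perp$; then $x_0=Sy_0+Sy_1$, and since $Sy_1\in\mcl{H}_0^\perp$ by the block form, projecting onto $\mcl{H}_0$ gives $S_1 y_0=x_0$. For injectivity, if $S_1 x_0=0$ then $Sx_0=S_2 x_0\in\mcl{H}_0^\perp$, whence $x_0=S^{-1}(Sx_0)\in\mcl{H}_0^\perp\cap\mcl{H}_0=\{0\}$. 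A bijective bounded operator on a Banach space is invertible (open mapping theorem), completing the proof. The main subtlety is recognizing the distinct roles of the two inequalities: the upper bound alone yields the block form, while the lower bound is precisely what permits transporting the invariance to $S^{-1}$, and this second piece of information is essential to control $S_1$.
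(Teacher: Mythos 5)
Your proof is correct, but it takes a genuinely different and more elementary route than the paper. The paper first invokes its Lemma \ref{lem-invertible-conjugate-positive-inequality} (which rests on Douglas' lemma and the Fillmore--Williams/Dixmier factorization, Theorem \ref{thm-Fill-Dix}) to get $\ran{S^*P^{\frac{1}{2}}}=\ran{P^{\frac{1}{2}}}$, deduces $S^*(\mcl{H}_0)=\mcl{H}_0$, and reads off both the triangular form and the invertibility of $S_1$ from the resulting upper-triangular shape of $S^*$. You instead work on the kernel side, using $\mcl{H}_0^\perp=\ker{P}$ and the direct quadratic-form computation $0\le\ip{Sx,PSx}\le\beta\ip{x,Px}=0$ to get $S(\ker{P})\subseteq\ker{P}$ from the upper bound alone, and then transport the argument to $S^{-1}$ via $(S^{-1})^*PS^{-1}\le\alpha^{-1}P$ to control $S_1$. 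The two approaches are dual ($S$-invariance of $\ker{P}$ is equivalent to $S^*$-invariance of $\clran{P}$), but yours is entirely self-contained, avoiding operator-range theory, and it cleanly exposes which hypothesis does what: the upper bound gives the block form, the lower bound gives invertibility of the corner. The paper's route has the advantage of reusing machinery already set up for other results in the section. Both proofs correctly use the full two-sided inequality where it is needed.
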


\begin{proof}
    With respect to the decomposition $\mcl{H}=\mcl{H}_0\oplus\mcl{H}_0^\perp$ we have $P=\sMatrix{P_0&0\\0&0}$ for some $P_0\in\B{\mcl{H}_0}_+$ with $\ker{P_0}=\{0\}$, and write $S=\sMatrix{S_1&S_2\\S_3&S_4}$. 
    As $\alpha P\leq S^*PS\leq \beta P$, by Lemma \ref{lem-invertible-conjugate-positive-inequality}, we get 
    \begin{align*}
        \ran{P^{\frac{1}{2}}}=\ran{S^*P^{\frac{1}{2}}}=S^*(\ran{P^{\frac{1}{2}}}). 
    \end{align*} 
    Since $S^*$ is invertible and $\mcl{H}_0=\clran{P^{\frac{1}{2}}}$, from the above, we conclude that $S^*(\mcl{H}_0)=\mcl{H}_0$.  
    Hence $S^*$ must be of the form  $S^*=\sMatrix{S_1^*&S_3^*\\0&S_4^*}$, and this would implies that $S_1^*(\mcl{H}_0)=\mcl{H}_0$. Further, $S^*$ is invertible implies that $S_1^*$ is one-one. Thus $S_1^*$ is bijective and hence invertible.  
\end{proof}

\begin{corollary}\label{cor-invertible-conjugate-positive-inequality}
    Assume $\dim(\mcl{H})<\infty$ and let $0\neq P\in\B{\mcl{H}}_+$ with $\ker{P}\neq \{0\}$ and let $\mcl{H}_0:=\clran{P}$. If $T\in\B{\mcl{H}}$ is an invertible operator such that $T^*PT\leq \beta P$ for some scalar $\beta\in (0,\infty)$, then with respect to the decomposition $\mcl{H}=\mcl{H}_0\oplus\mcl{H}_0^\perp$ the operator $T$ has the block form $T=\sMatrix{T_1&0\\T_2&T_3}$ with $T_1\in\B{\mcl{H}_0}$ invertible.
\end{corollary}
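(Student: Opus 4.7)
The plan is to reduce the corollary to the previously established Lemma \ref{lem-P-S-decomp} by upgrading the one-sided inequality $T^*PT\leq\beta P$ into a two-sided inequality $\alpha P\leq T^*PT\leq\beta P$ using finite-dimensionality. Once this upgrade is done, Lemma \ref{lem-P-S-decomp} directly yields the claimed block form of $T$, with $T_1\in\B{\mcl{H}_0}$ invertible.

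First, I would invoke Remark \ref{rem-invertible-conjugate-positive-inequality}: since $\dim(\mcl{H})<\infty$, $T$ is invertible, and $T^*PT\leq\beta P$, the equivalent conditions of Lemma \ref{lem-invertible-conjugate-positive-inequality} automatically hold. In particular, condition $(i)$ of that lemma gives a scalar $\alpha>0$ such that $\alpha P\leq T^*PT$. Combined with the given upper bound, we obtain
\begin{align*}
    \alpha P \leq T^*PT \leq \beta P.
\end{align*}

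Next, I would apply Lemma \ref{lem-P-S-decomp} directly with $S=T$. All hypotheses of that lemma are now satisfied: $P$ is a nonzero positive operator with nontrivial kernel, $T$ is invertible on $\mcl{H}$, and the two-sided comparison $\alpha P\leq T^*PT\leq\beta P$ holds. The lemma then produces, with respect to the decomposition $\mcl{H}=\mcl{H}_0\oplus\mcl{H}_0^\perp$, the block form
\begin{align*}
    T=\Matrix{T_1 & 0\\ T_2 & T_3}
\end{align*}
with $T_1\in\B{\mcl{H}_0}$ invertible, which is exactly the conclusion required.

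There is no real obstacle here beyond verifying the hypotheses of Remark \ref{rem-invertible-conjugate-positive-inequality}, which in turn relies only on Douglas' lemma together with the fact that an injective linear map on the finite-dimensional subspace $\ran{P^{1/2}}\subseteq\mcl{H}_0$ is necessarily surjective onto that subspace. Thus the corollary is essentially a finite-dimensional specialization of Lemma \ref{lem-P-S-decomp}, where the lower bound on $T^*PT$ is extracted for free from dimensional considerations.
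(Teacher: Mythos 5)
Your proposal is correct and follows exactly the paper's own argument: the paper's proof reads ``Follows from Remark \ref{rem-invertible-conjugate-positive-inequality} and the above lemma,'' which is precisely your reduction of the one-sided bound to the two-sided bound via finite-dimensionality, followed by an application of Lemma \ref{lem-P-S-decomp}. You have simply written out the details that the paper leaves implicit.
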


\begin{proof}
     Follows from Remark \ref{rem-invertible-conjugate-positive-inequality} and the above lemma. 
\end{proof}

The result below shows that, in the finite-dimensional case, understanding the structure of $P$-$C^*$-extreme points can be reduced to the scenario where $\ker{P}=\{0\}$, i.e.,  $P$ is invertible.
 
\begin{proposition}\label{prop-Phi-Phi0}
    Suppose $0\neq P\in\B{\mcl{H}}_+$ with $\ker{P}\neq\{0\}$ and $\Phi\in\mathrm{CP}^{(P)}(\mcl{A},\B{\mcl{H}})$. Let $P_0$ and $\Phi_0:\mcl{A}\to\B{\mcl{H}_0}$ be as in \eqref{eq-P-P0-Phi-Phi0}, where $\mcl{H}_0:=\clran{P}$. Then the following hold:
    \begin{enumerate}[label=(\roman*)]
        \item If $\Phi\in\mathrm{CP}^{(P)}_{C^*-ext}(\mcl{A},\B{\mcl{H}})$, then $\Phi_0\in\mathrm{CP}^{(P_0)}_{C^*-ext}(\mcl{A},\B{\mcl{H}_0})$.
        \item Conversely, if $\dim(\mcl{H})<\infty$ and $\Phi_0\in\mathrm{CP}^{(P_0)}_{C^*-ext}(\mcl{A},\B{\mcl{H}_0})$, then $\Phi\in\mathrm{CP}^{(P)}_{C^*-ext}(\mcl{A},\B{\mcl{H}})$.
    \end{enumerate}
    (Note that, in $(ii)$, $P_0$ is invertible.)
\end{proposition}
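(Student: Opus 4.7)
The overall strategy will be to exploit the block-diagonal decomposition $\Phi = \sMatrix{\Phi_0 & 0 \\ 0 & 0}$ with respect to $\mcl{H} = \mcl{H}_0 \oplus \mcl{H}_0^\perp$ provided by Lemma \ref{lem-P-P0-Phi-Phi0}, and transfer $P$-$C^*$-convex decompositions back and forth between $\Phi$ and $\Phi_0$. All the ingredients we need are already in place from the preceding lemmas; the point is to move between the two scales using block-triangular forms of the conjugating operators.

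For (i), I would start with an arbitrary proper $P_0$-$C^*$-convex decomposition $\Phi_0 = \sum_{j=1}^n \mathrm{Ad}_{T_j^0} \circ \Psi_j$ with $\Psi_j \in \mathrm{CP}^{(P_0)}(\mcl{A},\B{\mcl{H}_0})$ and $T_j^0 \in \B{\mcl{H}_0}$ invertible, and lift it to a proper $P$-$C^*$-convex decomposition of $\Phi$ by setting $T_j := \sMatrix{T_j^0 & 0 \\ 0 & I_{\mcl{H}_0^\perp}}$ and $\Phi_j := \sMatrix{\Psi_j & 0 \\ 0 & 0}$. A direct check shows $\Phi_j \in \mathrm{CP}^{(P)}(\mcl{A},\B{\mcl{H}})$, $\sum_j T_j^* P T_j = P$, and $\Phi = \sum_j \mathrm{Ad}_{T_j} \circ \Phi_j$. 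By $P$-$C^*$-extremity of $\Phi$ there exist invertible $S_j \in \B{\mcl{H}}$ with $\Phi_j = \mathrm{Ad}_{S_j} \circ \Phi$; evaluating at $1 \in \mcl{A}$ gives $S_j^* P S_j = P$, so Lemma \ref{lem-P-S-decomp} (with $\alpha = \beta = 1$) forces the block form $S_j = \sMatrix{S_j^1 & 0 \\ S_j^2 & S_j^3}$ with $S_j^1 \in \B{\mcl{H}_0}$ invertible. A $2\times 2$ block computation of $\mathrm{Ad}_{S_j} \circ \Phi$ then collapses all but the $(1,1)$-entry, giving $\Psi_j = \mathrm{Ad}_{S_j^1} \circ \Phi_0$, which is what was wanted.

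For (ii), I would reverse the argument. Given a proper $P$-$C^*$-convex decomposition $\Phi = \sum_j \mathrm{Ad}_{T_j} \circ \Phi_j$, Lemma \ref{lem-P-P0-Phi-Phi0} places each $\Phi_j = \sMatrix{(\Phi_j)_0 & 0 \\ 0 & 0}$ with $(\Phi_j)_0 \in \mathrm{CP}^{(P_0)}(\mcl{A},\B{\mcl{H}_0})$. Since $T_j^* P T_j \leq P$ and $\dim(\mcl{H}) < \infty$, Corollary \ref{cor-invertible-conjugate-positive-inequality} supplies the block form $T_j = \sMatrix{T_j^1 & 0 \\ T_j^2 & T_j^3}$ with $T_j^1 \in \B{\mcl{H}_0}$ invertible. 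Block multiplication then collapses the decomposition to $\Phi_0 = \sum_j \mathrm{Ad}_{T_j^1} \circ (\Phi_j)_0$ together with $\sum_j (T_j^1)^* P_0 T_j^1 = P_0$, i.e.\ a proper $P_0$-$C^*$-convex decomposition of $\Phi_0$. By $P_0$-$C^*$-extremity there exist invertible $S_j^1 \in \B{\mcl{H}_0}$ with $(\Phi_j)_0 = \mathrm{Ad}_{S_j^1} \circ \Phi_0$; these lift to invertible $S_j := \sMatrix{S_j^1 & 0 \\ 0 & I_{\mcl{H}_0^\perp}}$ satisfying $\Phi_j = \mathrm{Ad}_{S_j} \circ \Phi$, as required.

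The main obstacle, and the only place the finite-dimensional hypothesis enters, is obtaining the block-triangular form of the conjugating operators. In (i) the identity $S_j^* P S_j = P$ gives the two-sided estimate $\alpha P \leq S_j^* P S_j \leq \beta P$ for free, so Lemma \ref{lem-P-S-decomp} fires in any dimension. In (ii) we only have the one-sided inequality $T_j^* P T_j \leq P$, and the missing lower bound $\alpha P \leq T_j^* P T_j$ is supplied by the dimension-counting argument of Remark \ref{rem-invertible-conjugate-positive-inequality} (applied through Corollary \ref{cor-invertible-conjugate-positive-inequality}) which shows $T_j^*(\ran{P^{1/2}}) = \ran{P^{1/2}}$. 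This asymmetry is exactly what forces $\dim(\mcl{H}) < \infty$ in (ii) but not in (i).
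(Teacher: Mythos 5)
Your proof is correct and follows essentially the same route as the paper: lift/collapse block-diagonal decompositions across $\mcl{H}=\mcl{H}_0\oplus\mcl{H}_0^\perp$, using Lemma \ref{lem-P-S-decomp} for the conjugating operators in (i) and Corollary \ref{cor-invertible-conjugate-positive-inequality} for the coefficients $T_j$ in (ii). Your closing remark correctly identifies why finite-dimensionality is needed only in (ii); no gaps.
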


\begin{proof}
    $(i)$ Assume that $\Phi\in\mathrm{CP}^{(P)}_{C^*-ext}(\mcl{A},\B{\mcl{H}})$. Suppose $\Phi_0 = \sum_{j=1}^n \mathrm{Ad}_{R_j}\circ\Psi_j$ is a proper $P_0$-$C^*$-convex decomposition with $\Psi_j \in \mathrm{CP}^{(P_0)}(\mcl{A},\B{\mcl{H}_0})$ and  $R_j \in \B{\mcl{H}_0}$ invertible satisfying $\sum_{j=1}^nR_j^*P_0R_j=P_0$. Decompose $\mcl{H}=\mcl{H}_0\oplus\mcl{H}_0^\perp$. Consider $T_j:=\sMatrix{R_j&0\\0&I_{\mcl{H}_0^\perp}}\in\B{\mcl{H}}$ invertible, and $\Phi_j(\cdot):=\sMatrix{\Psi_j(\cdot)&0\\0&0} \in \mathrm{CP}^{(P)}(\mcl{A},\B{\mcl{H}})$  for all $1\leq j\leq n$. Then $\Phi = \sum_{j=1}^n \mathrm{Ad}_{T_j}\circ\Phi_j$ is a proper $P$-$C^*$-convex decomposition. Since $\Phi\in\mathrm{CP}^{(P)}_{C^*-ext}(\mcl{A},\B{\mcl{H}})$ there exists $S_j \in \B{\mcl{H}}$ invertible such that $\Phi = \mathrm{Ad}_{S_j}\circ\Phi_j$ for all $1\leq j\leq n$. In particular, $P=S_j^*PS_j$ for all $1\leq j\leq n$. Then, by Lemma \ref{lem-P-S-decomp}, each $S_j$ has the block form $S_j=\sMatrix{X_j&0\\\ast&\ast}$ with $X_j\in\B{\mcl{H}_0}$ invertible, where '$\ast$' denote unspecified entries. Now,  $\Phi = \mathrm{Ad}_{S_j}\circ\Phi_j$ implies that  $\Phi_0 = \mathrm{Ad}_{X_j}\circ\Psi_j$ for all $1\leq j\leq n$, and we conclude that $\Phi_0 \in \mathrm{CP}^{(P_0)}_{C^*-ext}(\mcl{A},\B{\mcl{H}_0})$.  \\ 
    $(ii)$ Suppose $\Phi = \sum_{j=1}^n \mathrm{Ad}_{T_j}\circ\Phi_j$ is a proper $P$-$C^*$-convex decomposition with $\Phi_j \in \mathrm{CP}^{(P)}(\mcl{A},\B{\mcl{H}})$ and $T_j\in\B{\mcl{H}}$. Note that with respect to the decomposition $\mcl{H}=\mcl{H}_0\oplus\mcl{H}_0^\perp$ each $\Phi_j$ has the block form  $\Phi_j(\cdot)=\sMatrix{\Psi_j(\cdot)&0\\0&0}$, for some $\Psi_j\in\mathrm{CP}^{(P_0)}(\mcl{A},\B{\mcl{H}_0})$. Now, since $T_j$ is invertible, ${T_j}^*PT_j \leq P$ and $\dim(\mcl{H}) < \infty$, by Corollary \ref{cor-invertible-conjugate-positive-inequality}, we have $T_j=\sMatrix{R_j&0\\\ast&\ast}$, where '$\ast$' denote unspecified entries and $R_j\in\B{\mcl{H}_0}$ invertible for all $1\leq j\leq n$. Thus, $\Phi_0 = \sum_{j=1}^n \mathrm{Ad}_{R_j}\circ\Psi_j$ is a proper $P_0$-$C^*$-convex combination of $\Psi_j \in \mathrm{CP}^{(P_0)}(\mcl{A},\B{\mcl{H}_0})$ and hence  there exist invertible operators $X_j \in \B{\mcl{H}_0}$ such that $\Phi_0 = \mathrm{Ad}_{X_j}\circ\Psi_j$ for all $1\leq j\leq n$. Then $\Phi = \mathrm{Ad}_{S_j}\circ\Phi_j$, where $S_j=\sMatrix{X_j&0\\0&I_{\mcl{H}_0^\perp}}\in\B{\mcl{H}}$ invertible. Hence, $\Phi \in \mathrm{CP}^{(P)}_{C^*-ext}(\mcl{A},\B{\mcl{H}})$.
\end{proof}

\begin{proposition}\label{prop-Phi-Phi0-ext}
     Suppose $0\neq P\in\B{\mcl{H}}_+$ with $\ker{P}\neq\{0\}$ and $\Phi\in\mathrm{CP}^{(P)}(\mcl{A},\B{\mcl{H}})$. Let $P_0$ and $\Phi_0:\mcl{A}\to\B{\mcl{H}_0}$ be as in \eqref{eq-P-P0-Phi-Phi0}, where $\mcl{H}_0:=\clran{P}$. Then, 
      $\Phi\in\mathrm{CP}^{(P)}_{ext}(\mcl{A},\B{\mcl{H}})$ if and only if $\Phi_0\in\mathrm{CP}^{(P_0)}_{ext}(\mcl{A},\B{\mcl{H}_0})$.
\end{proposition}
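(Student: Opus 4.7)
The plan is to exploit Lemma \ref{lem-P-P0-Phi-Phi0}, which already tells us that every element of $\mathrm{CP}^{(P)}(\mcl{A},\B{\mcl{H}})$ has a block-diagonal form with only the upper-left $\mcl{H}_0$-corner nonzero. Once that rigid block structure is in hand, a convex decomposition of $\Phi$ corresponds bijectively to a convex decomposition of $\Phi_0$, and the equivalence of linear extremality drops out almost for free.

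For the forward implication, I would assume $\Phi\in\mathrm{CP}^{(P)}_{ext}(\mcl{A},\B{\mcl{H}})$ and take a proper convex combination $\Phi_0=\sum_{j=1}^n t_j\Psi_j$ with $\Psi_j\in\mathrm{CP}^{(P_0)}(\mcl{A},\B{\mcl{H}_0})$ and $t_j\in(0,1)$, $\sum_j t_j=1$. Define the lifts $\Phi_j(\cdot):=\sMatrix{\Psi_j(\cdot)&0\\0&0}$ on $\mcl{H}=\mcl{H}_0\oplus\mcl{H}_0^\perp$; each $\Phi_j$ is CP (as an ampliation of a CP map) and satisfies $\Phi_j(1)=\sMatrix{P_0&0\\0&0}=P$, so $\Phi_j\in\mathrm{CP}^{(P)}(\mcl{A},\B{\mcl{H}})$. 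Then $\Phi=\sum_j t_j\Phi_j$ is a proper convex combination, and linear extremality of $\Phi$ forces $\Phi_j=\Phi$, hence $\Psi_j=\Phi_0$ for all $j$.

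For the reverse implication, assume $\Phi_0\in\mathrm{CP}^{(P_0)}_{ext}(\mcl{A},\B{\mcl{H}_0})$ and write $\Phi=\sum_{j=1}^n t_j\Phi_j$ as a proper convex combination with $\Phi_j\in\mathrm{CP}^{(P)}(\mcl{A},\B{\mcl{H}})$ and $t_j\in(0,1)$. By Lemma \ref{lem-P-P0-Phi-Phi0} applied to each $\Phi_j$, we have $\Phi_j(\cdot)=\sMatrix{\Psi_j(\cdot)&0\\0&0}$ for some $\Psi_j\in\mathrm{CP}^{(P_0)}(\mcl{A},\B{\mcl{H}_0})$. Comparing upper-left corners yields $\Phi_0=\sum_j t_j\Psi_j$, a proper convex combination. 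Linear extremality of $\Phi_0$ then gives $\Psi_j=\Phi_0$, and therefore $\Phi_j=\Phi$ for every $j$, concluding that $\Phi\in\mathrm{CP}^{(P)}_{ext}(\mcl{A},\B{\mcl{H}})$.

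There is no real obstacle here; the only thing to verify carefully is that the block form supplied by Lemma \ref{lem-P-P0-Phi-Phi0} forces the off-diagonal blocks of each $\Phi_j$ in the decomposition to vanish, which is automatic because each $\Phi_j$ lies in $\mathrm{CP}^{(P)}$ and the lemma applies to it individually. Note that, unlike part (ii) of Proposition \ref{prop-Phi-Phi0}, no finite-dimensionality assumption is needed since we are not dealing with invertible conjugating operators but only with scalar convex combinations.
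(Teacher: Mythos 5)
Your proof is correct and is essentially the paper's own argument: the paper's proof is a one-sentence appeal to the fact that every element of $\mathrm{CP}^{(P)}(\mcl{A},\B{\mcl{H}})$ has the block-diagonal form from Lemma \ref{lem-P-P0-Phi-Phi0}, which is exactly the bijection between convex decompositions of $\Phi$ and of $\Phi_0$ that you spell out. Your write-up simply makes the details of that correspondence explicit.
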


\begin{proof}
     This follows from the definition of linear extreme points and the fact that, with respect to the decomposition $\mcl{H}=\mcl{H}_0\oplus\mcl{H}_0^\perp$, any $\Psi\in\mathrm{CP}^{(P)}(\mcl{A},\B{\mcl{H}})$ can be written as  
     $\Psi=\sMatrix{\Psi_0&0\\0&0}$ for some  $\Psi_0\in\mathrm{CP}^{(P_0)}(\mcl{A},\B{\mcl{H}_0})$.
\end{proof}

\noindent\textbf{Notation.}
    Given any $\Phi\in\mathrm{CP}(\mcl{A},\B{\mcl{H}})$ with $\Phi(1)\in\B{\mcl{H}}$ invertible, we define $\widehat{\Phi}:\mcl{A}\to\B{\mcl{H}}$ as the map    
    \begin{align}\label{eq-Phi-hat}
         \widehat{\Phi}(a):=\Phi(1)^{\frac{-1}{2}}\Phi(a)\Phi(1)^{-\frac{1}{2}},\qquad\forall~a\in\mcl{A}.
     \end{align}
    Clearly $\widehat{\Phi}\in\mathrm{UCP}(\mcl{A},\B{\mcl{H}})$ and $\Phi=\mathrm{Ad}_T\circ\widehat{\Phi}$, where $T=\Phi(1)^{\frac{1}{2}}\in\B{\mcl{H}}$ invertible. 

 Suppose $P\in \B{\mcl{H}}_+$ is invertible and $\Phi\in\mathrm{CP}^{(P)}(\mcl{A},\B{\mcl{H}})$. We observe that $\Phi$ is a linear extreme point of $\mathrm{CP}^{(P)}(\mcl{A},\B{\mcl{H}})$ if and only if $\widehat{\Phi}$ is a linear extreme point of the convex set $\mathrm{UCP}(\mcl{A},\B{\mcl{H}})$. In the following, we show that the same conclusion holds for $P$-$C^*$-extreme points as well. 

\begin{theorem}\label{thm-P-Cext-char}
   Let $P\in\B{\mcl{H}}_+$ be invertible and $\Phi\in\mathrm{CP}^{(P)}(\mcl{A},\B{\mcl{H}})$. Then the following are equivalent: 
    \begin{enumerate}[label=(\roman*)]
        \item $\Phi\in\mathrm{CP}^{(P)}_{C^*-ext}(\mcl{A},\B{\mcl{H}})$;
        \item $\widehat{\Phi}\in\mathrm{UCP}_{C^*-ext}(\mcl{A},\B{\mcl{H}})$; 
             
        \item $\Phi=\mathrm{Ad}_{T}\circ\widetilde{\Psi}$ for some $\widetilde{\Psi}\in\mathrm{UCP}_{C^*-ext}(\mcl{A},\B{\mcl{H}})$ and $T\in\B{\mcl{H}}$ invertible with $T^*T=P$; 
        \item For any $\Psi\in\mathrm{CP}(\mcl{A},\B{\mcl{H}})$ with $\Psi\leq_{cp}\Phi$ and $\Psi(1)$ invertible, there exists invertible operator $Z\in\B{\mcl{H}}$ such that $\Psi=\mathrm{Ad}_Z\circ\Phi$.
    \end{enumerate}
\end{theorem}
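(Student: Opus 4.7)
The plan is to establish the chain $(i)\Leftrightarrow(ii)\Leftrightarrow(iii)$ together with $(i)\Leftrightarrow(iv)$, exploiting the fact that, when $P$ is invertible, the assignment $\Phi\mapsto\widehat{\Phi}=\mathrm{Ad}_{P^{-1/2}}\circ\Phi$ is a bijection from $\mathrm{CP}^{(P)}(\mcl{A},\B{\mcl{H}})$ onto $\mathrm{UCP}(\mcl{A},\B{\mcl{H}})$ with inverse $\Psi\mapsto\mathrm{Ad}_{P^{1/2}}\circ\Psi$. The core idea is that this bijection intertwines the two convexity structures and transfers the extremality question from $\mathrm{CP}^{(P)}$ to the classical UCP setting.

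For $(i)\Leftrightarrow(ii)$, I would show that the substitution $T_j\leftrightarrow S_j:=P^{1/2}T_jP^{-1/2}$ implements a bijection between proper $P$-$C^*$-convex decompositions of $\Phi$ and proper $C^*$-convex decompositions of $\widehat{\Phi}$. A direct calculation gives
\begin{align*}
    \sum_j S_j^*S_j = P^{-1/2}\Big(\sum_j T_j^*PT_j\Big)P^{-1/2},
\end{align*}
so the constraint $\sum_j T_j^*PT_j=P$ corresponds exactly to $\sum_j S_j^*S_j=I$, with invertibility preserved on both sides. Using the composition law $\mathrm{Ad}_A\circ\mathrm{Ad}_B=\mathrm{Ad}_{BA}$, the decomposition $\Phi=\sum_j\mathrm{Ad}_{T_j}\circ\Phi_j$ transforms into $\widehat{\Phi}=\sum_j\mathrm{Ad}_{S_j}\circ\widehat{\Phi_j}$. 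Finally, invertible equivalence on $\mathrm{CP}^{(P)}$ matches unitary equivalence on $\mathrm{UCP}$: if $\Phi_j=\mathrm{Ad}_{R_j}\circ\Phi$ with $R_j$ invertible, then $\widehat{\Phi_j}=\mathrm{Ad}_{U_j}\circ\widehat{\Phi}$ with $U_j=P^{1/2}R_jP^{-1/2}$ invertible; evaluating both sides at $1$ forces $U_j^*U_j=I$, which combined with invertibility upgrades $U_j$ to a unitary. The converse direction is symmetric.

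For $(ii)\Leftrightarrow(iii)$, the implication $(ii)\Rightarrow(iii)$ is immediate by taking $\widetilde{\Psi}=\widehat{\Phi}$ and $T=P^{1/2}$. For $(iii)\Rightarrow(ii)$, given $\Phi=\mathrm{Ad}_T\circ\widetilde{\Psi}$ with $T^*T=P$, the operator $W:=TP^{-1/2}$ satisfies $W^*W=P^{-1/2}T^*TP^{-1/2}=I$ and is invertible, hence unitary, and one verifies $\widehat{\Phi}=\mathrm{Ad}_W\circ\widetilde{\Psi}$; since unitary equivalence preserves $C^*$-extremality, $\widehat{\Phi}$ inherits $C^*$-extremality from $\widetilde{\Psi}$. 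For $(i)\Leftrightarrow(iv)$, I will appeal directly to Theorem \ref{thm-RNT-CP-P}, which requires $\Psi(1)=B^*PB$ for some invertible $B$; when $P$ is invertible, this reduces to the simpler condition that $\Psi(1)$ be invertible, because both $\Psi(1)^{1/2}$ and $P^{1/2}$ have range $\mcl{H}$ and trivial kernel, so Theorem \ref{thm-Fill-Dix} produces an invertible $C$ with $\Psi(1)^{1/2}=P^{1/2}C$, giving $\Psi(1)=C^*PC$.

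The main obstacle is the bookkeeping in $(i)\Leftrightarrow(ii)$: one must see that invertible conjugacy in the $P$-$C^*$-framework corresponds precisely to unitary conjugacy in the UCP-framework. The resolution hinges on unitality of the conjugated UCP maps, which converts the identity $R_j^*PR_j=P$ into $U_j^*U_j=I$ after conjugation by $P^{\pm 1/2}$. Once this dictionary is in place, the remaining implications reduce to the algebraic identities above together with appeals to Theorems \ref{thm-RNT-CP-P} and \ref{thm-Fill-Dix}.
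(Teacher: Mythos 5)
Your proposal is correct and follows essentially the same route as the paper: the key step in both is the conjugation dictionary $T_j\leftrightarrow P^{1/2}T_jP^{-1/2}$ translating proper $P$-$C^*$-convex decompositions of $\Phi$ into proper $C^*$-convex decompositions of $\widehat{\Phi}$, with invertible equivalence upgrading to unitary equivalence by evaluating at $1$. The only organizational difference is that the paper closes the cycle via $(iii)\Rightarrow(iv)$ using Theorem \ref{thm-Zhou}, whereas you obtain $(i)\Leftrightarrow(iv)$ directly from Theorem \ref{thm-RNT-CP-P} together with the observation that for invertible $P$ the condition $\Psi(1)=B^*PB$ reduces to invertibility of $\Psi(1)$ --- which is precisely the alternative the paper itself points out in the remark following its proof.
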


\begin{proof}
    $(i)\Rightarrow (ii)$ Suppose $\Psi_j\in\mathrm{UCP}(\mcl{A},\B{\mcl{H}})$ and $T_j\in\B{\mcl{H}}$ invertible are such that $\sum_{j=1}^nT_j^*T_j=I_\mcl{H}$ and  
        $\widehat{\Phi}=\sum_{j=1}^n\mathrm{Ad}_{T_j}\circ\Psi_j$.
    Then
    \begin{align*}
        \Phi=\mathrm{Ad}_{P^{\frac{1}{2}}}\circ\widehat{\Phi}
        =\sum_{j=1}^n\mathrm{Ad}_{\widetilde{T}_j}\circ\Phi_j,
    \end{align*}
    where $\Phi_j:=\mathrm{Ad}_{P^{\frac{1}{2}}}\circ\Psi_j\in\mathrm{CP}^{(P)}(\mcl{A},\B{\mcl{H}})$ and $\widetilde{T}_j:=P^{-\frac{1}{2}}T_jP^{\frac{1}{2}}\in\B{\mcl{H}}$ invertible with $\sum_j\widetilde{T}_j^*P\widetilde{T}_j=P$. Since $\Phi$ is a $P$-$C^*$-extreme point there exist invertible $S_j\in\B{\mcl{H}}$ such that $\Phi_j=\mathrm{Ad}_{S_j}\circ\Phi$, and hence, $\Psi_j=\mathrm{Ad}_{U_j}\circ\widehat{\Phi}$, where $U_j:=P^{\frac{1}{2}}S_jP^{-\frac{1}{2}}\in\B{\mcl{H}}$ is an invertible isometry and hence a unitary for all $1\leq j\leq n$. This concludes that $\widehat{\Phi}\in\mathrm{UCP}_{C^*-ext}(\mcl{A},\B{\mcl{H}})$.  \\
    $(ii)\Rightarrow (iii)$ Follows since $\Phi=\mathrm{Ad}_{P^{\frac{1}{2}}}\circ\widehat{\Phi}$. \\
    $(iii)\Rightarrow (iv)$ Assume that $\Phi=\mathrm{Ad}_{T}\circ\widetilde{\Psi}$ for some $\widetilde{\Psi}\in\mathrm{UCP}_{C^*-ext}(\mcl{A},\B{\mcl{H}})$ and $T\in\B{\mcl{H}}$ invertible.  Let  $\Psi\in\mathrm{CP}(\mcl{A},\B{\mcl{H}})$ be such that  $\Psi(1)$ invertible and $\Psi\leq_{cp}\Phi$. Then $\mathrm{Ad}_{T^{-1}}\circ\Psi\leq_{cp}\widetilde{\Psi}$. Since $\widetilde{\Psi}\in\mathrm{UCP}_{C^*-ext}(\mcl{A,\B{\mcl{H}}})$, by Theorem \ref{thm-Zhou}, there exists invertible operator $Y\in\B{\mcl{H}}$ such that $\mathrm{Ad}_{T^{-1}}\circ\Psi=\mathrm{Ad}_Y\circ\widetilde{\Psi}$. Then $\Psi=\mathrm{Ad}_{Z}\circ\Phi$, where $Z:=T^{-1}YT\in\B{\mcl{H}}$ is invertible. \\
    $(iv)\Rightarrow (i)$ Consider a proper $P$-$C^*$-convex decomposition  of $\Phi$, say $\Phi=\sum_{j=1}^n\mathrm{Ad}_{T_j}\circ\Phi_j$ with $\Phi_j\in\mathrm{CP}^{(P)}(\mcl{A},\B{\mcl{H}})$ and $T_j\in\B{\mcl{H}}$ invertible  and satisfies $\sum_{j=1}^nT_j^*PT_j=P$. Then for each $1\leq j\leq n$, $\Psi_j:=\mathrm{Ad}_{T_j}\circ\Phi_j$ is such that $\Psi_j(1)$ is invertible and $\Psi_j\leq_{cp}\Phi$. Hence by assumption, there exist invertible operators $Z_j\in\B{\mcl{H}}$ such that $\mathrm{Ad}_{T_j}\circ\Phi_j=\mathrm{Ad}_{Z_j}\circ\Phi$ for all $1\leq j\leq n$. 
    Then $S_j:=Z_jT_j^{-1}\in\B{\mcl{H}}$ is invertible such that $\Phi_j=\mathrm{Ad}_{S_j}\circ\Phi$ for all $1\leq j\leq n$. Hence $\Phi\in\mathrm{CP}^{(P)}_{C^*-ext}(\mcl{A,\B{\mcl{H}}})$.
\end{proof}

 Note that the equivalence of $(i)$ and $(iv)$ in the above theorem will also follows from Theorem \ref{thm-RNT-CP-P}.


\begin{theorem}\label{thm-P-Cstar-structure}
    Let $\mcl{H}$ be a finite-dimensional Hilbert space, $P\in\B{\mcl{H}}_+$, $\mcl{A}$ be a unital $C^*$-algebra and $\Phi \in \mathrm{CP^{(P)}}(\mcl{A},\B{\mcl{H}})$. Then $\Phi \in \mathrm{CP^{(P)}}_{C^*-ext}(\mcl{A},\B{\mcl{H}})$ if and only if there exist
        finitely many pairwise non-equivalent irreducible representations $\pi_1,\pi_2,\dots,\pi_k$ of $\mcl{A}$,
        Hilbert spaces $\{\mcl{H}_j^i: 1\leq i\leq k, 1\leq j\leq n_i\}$, and
        pure UCP maps $\Phi_j^{\pi_i}:\mcl{A}\to\B{\mcl{H}_j^i}$
    such that $\{\Phi_j^{\pi_i}\}_{j=1}^{n_i}$ is a  nested sequence of compressions of representation $\pi_i$ for all $1\leq i\leq k$ and 
    \begin{align}\label{eq-CP-P-Cstar-structure}
        \mcl{H}=\big(\oplus_{i=1}^k\oplus_{j=1}^{n_i}\mcl{H}_j^i\big)\oplus\ran{P}^\perp,
        \qquad
           \Phi= S^*\Big(\Big(\oplus_{i=1}^k \oplus_{j=1}^{n_i} \Phi_j^{\pi_i}\Big)\oplus 0\Big)S
    \end{align}
    for some invertible operator $S\in\B{\mcl{H}}$, where $0:\mcl{A}\to\B{\ran{P}^\perp}$ is the zero map. 
\end{theorem}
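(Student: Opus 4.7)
The plan is to combine three previously established ingredients: Proposition \ref{prop-Phi-Phi0}, which reduces the problem to the case where $P$ is invertible; Theorem \ref{thm-P-Cext-char}, which in that invertible case identifies the $P$-$C^*$-extreme points of $\mathrm{CP}^{(P)}(\mcl{A},\B{\mcl{H}})$ with the invertible conjugates of the ordinary $C^*$-extreme points of $\mathrm{UCP}(\mcl{A},\B{\mcl{H}})$; and the Farenick--Zhou Theorem \ref{thm-MAIN-FaZh}, which describes $\mathrm{UCP}_{C^*-ext}(\mcl{A},\B{\mcl{H}})$ in finite dimensions as direct sums of nested compressions of finitely many pairwise non-equivalent irreducible representations.

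First I would treat the case $\ker{P}=\{0\}$, so that $P$ is invertible and $\ran{P}^\perp=\{0\}$ (hence the trailing $\oplus 0$ in \eqref{eq-CP-P-Cstar-structure} disappears). For necessity, given $\Phi\in\mathrm{CP}^{(P)}_{C^*-ext}(\mcl{A},\B{\mcl{H}})$, the implication $(i)\Rightarrow(iii)$ of Theorem \ref{thm-P-Cext-char} produces an invertible $T\in\B{\mcl{H}}$ with $T^*T=P$ and a map $\widetilde{\Psi}\in\mathrm{UCP}_{C^*-ext}(\mcl{A},\B{\mcl{H}})$ such that $\Phi=\mathrm{Ad}_T\circ\widetilde{\Psi}$. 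Applying Theorem \ref{thm-MAIN-FaZh} to $\widetilde{\Psi}$ yields finitely many pairwise non-equivalent irreducible representations $\pi_1,\dots,\pi_k$, nested compressions $\Phi_j^{\pi_i}\colon\mcl{A}\to\B{\mcl{H}_j^i}$, and a unitary $U\in\B{\mcl{H}}$ such that $\mcl{H}=\bigoplus_{i,j}\mcl{H}_j^i$ and $\widetilde{\Psi}=\mathrm{Ad}_U\circ\bigl(\bigoplus_{i,j}\Phi_j^{\pi_i}\bigr)$. Then $S:=UT\in\B{\mcl{H}}$ is invertible and $\Phi=\mathrm{Ad}_S\circ\bigl(\bigoplus_{i,j}\Phi_j^{\pi_i}\bigr)$, which is the required form. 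Conversely, if $\Phi$ has this form with invertible $S$, Theorem \ref{thm-MAIN-FaZh} places $\bigoplus_{i,j}\Phi_j^{\pi_i}$ in $\mathrm{UCP}_{C^*-ext}(\mcl{A},\B{\mcl{H}})$, and the implication $(iii)\Rightarrow(i)$ of Theorem \ref{thm-P-Cext-char} places $\Phi$ in $\mathrm{CP}^{(P)}_{C^*-ext}(\mcl{A},\B{\mcl{H}})$.

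For a general $P\in\B{\mcl{H}}_+$, I would invoke Lemma \ref{lem-P-P0-Phi-Phi0} and Proposition \ref{prop-Phi-Phi0}. Setting $\mcl{H}_0:=\ran{P}$, which is closed since $\dim\mcl{H}<\infty$, and decomposing $\mcl{H}=\mcl{H}_0\oplus\ran{P}^\perp$, the lemma gives $P=P_0\oplus 0$ with $P_0\in\B{\mcl{H}_0}_+$ invertible and $\Phi=\Phi_0\oplus 0$ with $\Phi_0\in\mathrm{CP}^{(P_0)}(\mcl{A},\B{\mcl{H}_0})$, while Proposition \ref{prop-Phi-Phi0} asserts that $\Phi$ is a $P$-$C^*$-extreme point if and only if $\Phi_0$ is a $P_0$-$C^*$-extreme point. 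Applying the invertible case already handled to $\Phi_0$ produces an invertible $S_0\in\B{\mcl{H}_0}$, a Hilbert space decomposition $\mcl{H}_0=\bigoplus_{i,j}\mcl{H}_j^i$, and nested compressions $\Phi_j^{\pi_i}$ of pairwise non-equivalent irreducible $\pi_i$ with $\Phi_0=\mathrm{Ad}_{S_0}\circ\bigl(\bigoplus_{i,j}\Phi_j^{\pi_i}\bigr)$. Setting $S:=S_0\oplus I_{\ran{P}^\perp}\in\B{\mcl{H}}$, which is invertible, delivers $\Phi=\mathrm{Ad}_S\circ\bigl(\bigl(\bigoplus_{i,j}\Phi_j^{\pi_i}\bigr)\oplus 0\bigr)$, i.e.\ the decomposition in \eqref{eq-CP-P-Cstar-structure}. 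The reverse direction in the general case follows by reading this same assembly in reverse and invoking the second halves of Proposition \ref{prop-Phi-Phi0} and the invertible case.

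The main obstacle is not conceptual but bookkeeping: keeping the Hilbert space decompositions aligned when passing between $\mcl{H}$ and $\mcl{H}_0$, verifying that the unitary from Theorem \ref{thm-MAIN-FaZh} can be absorbed into the invertible $T$ without losing invertibility (it can, and moreover $S^*S=P$ is preserved automatically since $U$ is unitary), and checking that the assembled $S$ satisfies $S^*\bigl(\bigl(\bigoplus_{i,j}I_{\mcl{H}_j^i}\bigr)\oplus 0\bigr)S=P$; this last equality reduces to $S_0^*S_0=P_0$, which is exactly $\Phi_0(1)=P_0$ after unpacking. Once these identifications are made, the theorem is a direct consequence of the three cited results.
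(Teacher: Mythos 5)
Your overall route is the paper's route: reduce to invertible $P$ via Lemma \ref{lem-P-P0-Phi-Phi0} and Proposition \ref{prop-Phi-Phi0}, identify the invertible case with $\mathrm{UCP}_{C^*-ext}$ via Theorem \ref{thm-P-Cext-char}, and invoke Theorem \ref{thm-MAIN-FaZh}. The case $\ker{P}=\{0\}$ and the forward direction of the general case are handled correctly (absorbing the Farenick--Zhou unitary into $T$ is fine, and $S^*S=P$ does come for free).

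There is, however, a genuine gap in your converse for non-invertible $P$, and it is exactly the point you dismissed as bookkeeping. The hypothesis of the theorem hands you an \emph{arbitrary} invertible $S\in\B{\mcl{H}}$ with $\Phi=\mathrm{Ad}_S\circ(\Psi\oplus 0)$, where $\Psi:=\oplus_{i,j}\Phi_j^{\pi_i}$; it does \emph{not} hand you an $S$ of the form $S_0\oplus I_{\ran{P}^\perp}$, so you cannot simply "read the assembly in reverse." Before you can apply Proposition \ref{prop-Phi-Phi0}(ii) or the invertible case, you must extract from $S$ an invertible $S_1\in\B{\mcl{H}_0}$ with $\Phi_0=\mathrm{Ad}_{S_1}\circ\Psi$. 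This requires an argument: writing $Q$ for the projection onto $\mcl{H}_0=\ran{P}$, one has $P=S^*QS$ together with $\alpha Q\leq P\leq \beta Q$ (since $P_0$ is invertible), whence $\alpha Q\leq S^*QS\leq \beta Q$; Lemma \ref{lem-P-S-decomp} then forces $S=\sMatrix{S_1&0\\S_2&S_3}$ with $S_1$ invertible, and only then does the computation
\begin{align*}
S^*\Matrix{\Psi&0\\0&0}S=\Matrix{S_1^*\Psi S_1&0\\0&0}
\end{align*}
give $\Phi_0=\mathrm{Ad}_{S_1}\circ\Psi$ with $\Psi\in\mathrm{UCP}_{C^*-ext}(\mcl{A},\B{\mcl{H}_0})$, so that Theorem \ref{thm-P-Cext-char} and Proposition \ref{prop-Phi-Phi0}(ii) finish the proof. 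Without this lower-triangularity step the off-diagonal blocks of $S$ could a priori mix $\mcl{H}_0$ with $\ran{P}^\perp$ and your reduction to the invertible case does not go through. The paper devotes essentially its entire converse argument to precisely this point; supplying it closes the gap, and the rest of your write-up stands.
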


\begin{proof}
    Assume $P\neq 0$. We prove the case when $P$ is not invertible. Let $\mcl{H}_0, P_0,\Phi_0$ be as in Lemma \ref{lem-P-P0-Phi-Phi0} so that $\Phi=\sMatrix{\Phi_0&0\\0&0}$. Assume that $\Phi \in \mathrm{CP^{(P)}}_{C^*-ext}(\mcl{A},\B{\mcl{H}})$. Then, from Proposition \ref{prop-Phi-Phi0} and Theorem \ref{thm-P-Cext-char}, we have  $\Phi_0=\mathrm{Ad}_X\circ\Psi$ for some $X\in\B{\mcl{H}_0}$ invertible and $\Psi\in\mathrm{UCP}_{C^*-ext}(\mcl{A},\B{\mcl{H}_0})$. Then using Theorem \ref{thm-MAIN-FaZh} we conclude that $\Phi$ has the decomposition as in \eqref{eq-CP-P-Cstar-structure}, with $S=\sMatrix{X&0\\0&I}$. 
    Conversely, assume that $\Phi$ has a decomposition  as in \eqref{eq-CP-P-Cstar-structure}. Note that $\mcl{H}_0=\ran{P}=\oplus_{i=1}^k\oplus_{j=1}^{n_i}\mcl{H}_j^i$. Let $\Psi:=\oplus_{i=1}^k \oplus_{j=1}^{n_i} \Phi_j^{\pi_i}\in\mathrm{UCP}_{C^*-ext}(\mcl{A},\B{\mcl{H}_0})$ and write $S=\sMatrix{S_1&S_2\\S_3&S_4}$ with respect to the decomposition $\mcl{H}=\mcl{H}_0\oplus\mcl{H}_0^\perp$. Now, since $P_0$ is invertible there exists positive scalar $\alpha>0$ such that $\alpha I_{\mcl{H}_0}\leq P_0\leq I_{\mcl{H}_0}$. Since $P=S^*\sMatrix{I_{\mcl{H}_0}&0\\0&0}S$ we conclude that
    \begin{align*}
        \alpha\Matrix{I_{\mcl{H}_0}&0\\0&0}\leq S^*\Matrix{I_{\mcl{H}_0}&0\\0&0}S\leq\Matrix{I_{\mcl{H}_0}&0\\0&0}.
    \end{align*}
    Then, by Lemma \ref{lem-P-S-decomp}, we get $S_1$ is invertible, and satisfies $\mathrm{Ad}_{S_1}\circ \Psi=\Phi_0$. Hence $\Phi_0$ is $C^*$-extreme point and so is $\Phi$.   
\end{proof}

\begin{proposition}\label{prop-Cstar-linear-ext-finite}
    Suppose $\dim(\mcl{H})<\infty$. Then 
    \begin{align*}
       \mathrm{CP}^{(P)}_{C^*-ext}(\mcl{A},\B{\mcl{H}})\subseteq\mathrm{CP}^{(P)}_{ext}(\mcl{A},\B{\mcl{H}})
    \end{align*}
    for all $P\in\B{\mcl{H}}_+$.
\end{proposition}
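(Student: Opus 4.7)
The plan is to reduce to the case where $P$ is invertible and then to transfer the problem to $\mathrm{UCP}(\mcl{A},\B{\mcl{H}})$, where the corresponding inclusion is already known (Proposition \ref{prop-FaMo}(i)). The case $P=0$ is trivial since then $\mathrm{CP}^{(P)}(\mcl{A},\B{\mcl{H}})=\{0\}$, so assume $P\neq 0$. If $P$ is not invertible, set $\mcl{H}_0:=\clran{P}=\ran{P}$ (closed because $\dim(\mcl{H})<\infty$), and write $\Phi=\sMatrix{\Phi_0&0\\0&0}$ with $\Phi_0\in\mathrm{CP}^{(P_0)}(\mcl{A},\B{\mcl{H}_0})$ and $P_0\in\B{\mcl{H}_0}$ invertible as in Lemma \ref{lem-P-P0-Phi-Phi0}. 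Proposition \ref{prop-Phi-Phi0}(i) gives $\Phi_0\in\mathrm{CP}^{(P_0)}_{C^*-ext}(\mcl{A},\B{\mcl{H}_0})$, and Proposition \ref{prop-Phi-Phi0-ext} reduces linear extremality of $\Phi$ to that of $\Phi_0$. So it suffices to handle the invertible case.

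Now suppose $P$ is invertible and $\Phi\in\mathrm{CP}^{(P)}_{C^*-ext}(\mcl{A},\B{\mcl{H}})$. By Theorem \ref{thm-P-Cext-char} ((i)$\Leftrightarrow$(ii)), $\widehat{\Phi}\in\mathrm{UCP}_{C^*-ext}(\mcl{A},\B{\mcl{H}})$, and Proposition \ref{prop-FaMo}(i) then yields $\widehat{\Phi}\in\mathrm{UCP}_{ext}(\mcl{A},\B{\mcl{H}})$. I claim this forces $\Phi\in\mathrm{CP}^{(P)}_{ext}(\mcl{A},\B{\mcl{H}})$. Indeed, suppose $\Phi=t\Phi_1+(1-t)\Phi_2$ is a proper convex combination with $\Phi_j\in\mathrm{CP}^{(P)}(\mcl{A},\B{\mcl{H}})$ and $t\in(0,1)$. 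Conjugating both sides by $P^{-\frac{1}{2}}$ yields
\begin{align*}
   \widehat{\Phi}=\mathrm{Ad}_{P^{-\frac{1}{2}}}\circ\Phi=t\,\mathrm{Ad}_{P^{-\frac{1}{2}}}\circ\Phi_1+(1-t)\,\mathrm{Ad}_{P^{-\frac{1}{2}}}\circ\Phi_2=t\widehat{\Phi_1}+(1-t)\widehat{\Phi_2},
\end{align*}
with $\widehat{\Phi_j}\in\mathrm{UCP}(\mcl{A},\B{\mcl{H}})$ since $\Phi_j(1)=P$. Extremality of $\widehat{\Phi}$ forces $\widehat{\Phi_1}=\widehat{\Phi_2}=\widehat{\Phi}$, and conjugating back by $P^{\frac{1}{2}}$ gives $\Phi_1=\Phi_2=\Phi$, as required.

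Assembling the two steps completes the proof: for arbitrary $P\in\B{\mcl{H}}_+$, $P$-$C^*$-extremality of $\Phi$ passes to $P_0$-$C^*$-extremality of $\Phi_0$, which (by the invertible case) gives $P_0$-linear extremality of $\Phi_0$, and Proposition \ref{prop-Phi-Phi0-ext} lifts this back to linear extremality of $\Phi$. There is no real obstacle here; the entire argument is a clean combination of Theorem \ref{thm-P-Cext-char}, Proposition \ref{prop-FaMo}(i), and the two reduction propositions. The only minor point worth verifying explicitly is that the map $\Psi\mapsto\mathrm{Ad}_{P^{\frac{1}{2}}}\circ\Psi$ is a linear bijection $\mathrm{UCP}(\mcl{A},\B{\mcl{H}})\to\mathrm{CP}^{(P)}(\mcl{A},\B{\mcl{H}})$ preserving convex combinations, which is immediate from invertibility of $P$.
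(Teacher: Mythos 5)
Your proof is correct and follows essentially the same route as the paper: reduce to invertible $P$ via Propositions \ref{prop-Phi-Phi0} and \ref{prop-Phi-Phi0-ext}, pass to $\widehat{\Phi}$ using Theorem \ref{thm-P-Cext-char} and Proposition \ref{prop-FaMo}(i), and transfer linear extremality back by conjugating with $P^{\pm\frac{1}{2}}$. The only difference is that you spell out explicitly the equivalence between extremality of $\Phi$ and of $\widehat{\Phi}$, which the paper merely cites as an earlier observation.
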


\begin{proof}
    Due to Proposition \ref{prop-Phi-Phi0} and \ref{prop-Phi-Phi0-ext}  we assume that $P\in\B{\mcl{H}}_+$ is invertible. Let $\Phi\in\mathrm{CP}^{(P)}_{C^*-ext}(\mcl{A},\B{\mcl{H}})$. Then, from Theorem \ref{thm-P-Cext-char} and \cite[Proposition 1.1]{FaMo97}, we have 
    \begin{align*}
        \widehat{\Phi}\in\mathrm{UCP}_{C^*-ext}(\mcl{A},\B{\mcl{H}})\subseteq\mathrm{UCP}_{ext}(\mcl{A},\B{\mcl{H}}).
    \end{align*}
    But we know that $\widehat{\Phi}\in\mathrm{UCP}_{ext}(\mcl{A,\B{\mcl{H}}})$ if and only if  $\Phi\in\mathrm{CP}^{(P)}_{ext}(\mcl{A},\B{\mcl{H}})$. 
\end{proof}

\begin{remark}\label{rmk-P-Cstar-comm-homo}
    Assume that $\dim(\mcl{H}) < \infty$.  Let $\mcl{A}$ be a commutative unital $C^*$-algebra, $P\in\B{\mcl{H}}$ be a projection and $\Phi\in\mathrm{CP^{(P)}}(\mcl{A},\B{\mcl{H}})$. Then $\Phi \in \mathrm{CP}^{(P)}_{C^*-ext}(\mcl{A},\B{\mcl{H}})$ if and only if $\Phi$ is multiplicative i.e., $\ast$-homomorphism. This follows from Propositions \ref{prop-Phi-Phi0}, \ref{prop-FaMo}, and the fact that $\Phi$ is multiplicative if and only if $\Phi_0$ in Lemma \ref{lem-P-P0-Phi-Phi0} is multiplicative. We prove a more general result in the following. 
\end{remark}

 In (\cite[Proposition 1.4.10]{Arv69}), Arveson characterized the linear extreme points of the convex set $\mathrm{CP^{(P)}}(\mcl{A},\B{\mcl{H}})$, where $\mcl{A}$ is a commutative unital $C^*$-algebra and $\dim(\mcl{H})<\infty$. In the following, we characterize the $P$-$C^*$-extreme points. 

\begin{proposition}
    Assume that $\dim(\mcl{H})<\infty$ and $\mcl{A}=C(\Omega)$, where $\Omega$ is a compact Hausdorff space. Let $P\in\B{\mcl{H}}_+$ and $\Phi\in\mathrm{CP^{(P)}}(\mcl{A},\B{\mcl{H}})$. Then the following are equivalent: 
    \begin{enumerate}[label=(\roman*)]
        \item $\Phi \in \mathrm{CP}^{(P)}_{C^*-ext}(\mcl{A},\B{\mcl{H}})$;
        \item There exist distinct points $w_1,w_2,\cdots,w_k$ in $\Omega$ and mutually orthogonal projections $\{Q_j\}_{j=1}^k\subseteq\B{\mcl{H}}$ such that 
        \begin{align}\label{eq-P-Cstar-comm}
            \Phi(f)=\sum_{j=1}^kf(w_j)P^{\frac{1}{2}}Q_jP^{\frac{1}{2}},\qquad\forall~f\in C(\Omega),
        \end{align}
        and $\sum_{j=1}^kQ_j$ is the projection onto the $\ran{P}$.
    \end{enumerate}
\end{proposition}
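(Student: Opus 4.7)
The plan is to reduce the problem to the invertible case via Proposition \ref{prop-Phi-Phi0} and Lemma \ref{lem-P-P0-Phi-Phi0}, then apply Theorem \ref{thm-P-Cext-char} to convert the $P$-$C^*$-extremality condition into a statement about $C^*$-extremality of a UCP map, and finally use Proposition \ref{prop-FaMo}(iii), which says that in the commutative finite-dimensional setting $C^*$-extreme UCP maps are precisely unital $\ast$-homomorphisms. The explicit description \eqref{eq-P-Cstar-comm} will then follow from the standard classification of $\ast$-homomorphisms $C(\Omega) \to \B{\mcl{H}_0}$ for $\mcl{H}_0$ finite-dimensional.

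First I would dispose of the trivial case $P=0$ and assume $P\neq 0$. Write $\mcl{H}_0:=\ran{P}$ (closed as $\dim\mcl{H}<\infty$) and decompose $\mcl{H}=\mcl{H}_0\oplus\mcl{H}_0^\perp$. By Lemma \ref{lem-P-P0-Phi-Phi0}, $P=\sMatrix{P_0&0\\0&0}$ with $P_0\in\B{\mcl{H}_0}_+$ invertible, and $\Phi=\sMatrix{\Phi_0&0\\0&0}$ with $\Phi_0\in\mathrm{CP}^{(P_0)}(\mcl{A},\B{\mcl{H}_0})$. Proposition \ref{prop-Phi-Phi0} gives $\Phi\in\mathrm{CP}^{(P)}_{C^*-ext}(\mcl{A},\B{\mcl{H}})$ if and only if $\Phi_0\in\mathrm{CP}^{(P_0)}_{C^*-ext}(\mcl{A},\B{\mcl{H}_0})$. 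Since $P_0$ is invertible, Theorem \ref{thm-P-Cext-char} identifies this with $\widehat{\Phi_0}=P_0^{-1/2}\Phi_0(\cdot)P_0^{-1/2}\in\mathrm{UCP}_{C^*-ext}(\mcl{A},\B{\mcl{H}_0})$, and Proposition \ref{prop-FaMo}(iii) identifies this last condition with $\widehat{\Phi_0}$ being a unital $\ast$-homomorphism from $C(\Omega)$ into $\B{\mcl{H}_0}$.

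For the forward direction, assume $\widehat{\Phi_0}$ is a unital $\ast$-homomorphism. By spectral theory, every unital $\ast$-homomorphism $C(\Omega)\to\B{\mcl{H}_0}$ with $\dim(\mcl{H}_0)<\infty$ factors through the evaluation at finitely many points: there exist distinct points $w_1,\ldots,w_k\in\Omega$ and mutually orthogonal projections $Q_1',\ldots,Q_k'\in\B{\mcl{H}_0}$ with $\sum_{j=1}^kQ_j'=I_{\mcl{H}_0}$ such that $\widehat{\Phi_0}(f)=\sum_{j=1}^kf(w_j)Q_j'$. Conjugating by $P_0^{1/2}$ gives $\Phi_0(f)=\sum_{j=1}^kf(w_j)P_0^{1/2}Q_j'P_0^{1/2}$. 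Lifting back to $\mcl{H}$ by setting $Q_j:=\sMatrix{Q_j'&0\\0&0}\in\B{\mcl{H}}$ yields mutually orthogonal projections on $\mcl{H}$ with $\sum_{j=1}^kQ_j$ equal to the projection onto $\mcl{H}_0=\ran{P}$, and one checks that $P^{1/2}Q_jP^{1/2}=\sMatrix{P_0^{1/2}Q_j'P_0^{1/2}&0\\0&0}$, yielding formula \eqref{eq-P-Cstar-comm}.

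For the converse direction, if $\Phi$ has the form \eqref{eq-P-Cstar-comm} with $\sum_jQ_j$ the projection onto $\ran{P}$, then each $Q_j$ must have range in $\mcl{H}_0$, so writing $Q_j=\sMatrix{Q_j'&0\\0&0}$ we obtain $\Phi_0(f)=\sum_jf(w_j)P_0^{1/2}Q_j'P_0^{1/2}$ with $\sum_jQ_j'=I_{\mcl{H}_0}$, and hence $\widehat{\Phi_0}(f)=\sum_jf(w_j)Q_j'$. Using distinctness of the $w_j$'s and mutual orthogonality of the $Q_j'$'s, this map is directly verified to be a unital $\ast$-homomorphism; then reversing the chain of equivalences gives $\Phi\in\mathrm{CP}^{(P)}_{C^*-ext}(\mcl{A},\B{\mcl{H}})$. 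The main bookkeeping obstacle is the reduction to the invertible case and ensuring that the projections $Q_j$ in the statement correctly correspond to the $\ran{P}$-supported lifts of the spectral projections of $\widehat{\Phi_0}$; once this is set up, all the substantive work is carried by the cited earlier results.
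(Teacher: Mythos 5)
Your proposal is correct and follows essentially the same route as the paper: reduce to invertible $P$ via Proposition \ref{prop-Phi-Phi0}, pass to $\widehat{\Phi}$ via Theorem \ref{thm-P-Cext-char}, and invoke Farenick--Morenz's description of $C^*$-extreme UCP maps on commutative $C^*$-algebras (the paper cites their Proposition 2.2 directly for the form $\sum_j f(w_j)Q_j$, whereas you route through the $\ast$-homomorphism characterization of Proposition \ref{prop-FaMo}(iii) plus the spectral classification of finite-dimensional representations of $C(\Omega)$, which is equivalent). The bookkeeping with the block forms of $P$, $\Phi$, and the $Q_j$'s matches the paper's treatment.
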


\begin{proof}
    If $P=0$, then nothing to prove. So assume that $P\neq0$. Because of Proposition \ref{prop-Phi-Phi0}, it is enough to prove the equivalence of $(i)$ and $(ii)$ for the case when $P$ is invertible. 
    Note that if $P$ is not invertible, then $Q_j$'s in \eqref{eq-P-Cstar-comm} satisfies the relation  $Q_j\leq \sMatrix{I_{\mcl{H}_0}&0\\0&0}$ , the projection onto $\mcl{H}_0:=\ran{P}$. Hence each $Q_j$ must have the block matrix form $Q_j=\sMatrix{Q_{j,0}&0\\0&0}$ for some $Q_{j,0}\in\B{\mcl{H}_0}$. So assume that $P$ is invertible.\\
    $(i)\Rightarrow (ii)$ Assume that $\Phi \in \mathrm{CP}^{(P)}_{C^*-ext}(\mcl{A},\B{\mcl{H}})$. Then, by Theorem \ref{thm-P-Cext-char} and \cite[Proposition 2.2]{FaMo97}, there exist distinct points $w_1,w_2,\cdots,w_k$ in $\Omega$ and mutually orthogonal projections $\{Q_j\}_{j=1}^k\subseteq\B{\mcl{H}}$ such that $\sum_jQ_j=I$ and 
        \begin{align*}
            \widehat{\Phi}(f)=\sum_{j=1}^kf(w_j)Q_j,\qquad\forall~f\in C(\Omega),
        \end{align*}
    Then \eqref{eq-P-Cstar-comm} holds as $\Phi=\mathrm{Ad}_{P^{\frac{1}{2}}}\circ\widehat{\Phi}$. \\
    $(ii)\Rightarrow (i)$ Assume that $\Phi$ has the form \eqref{eq-P-Cstar-comm}. Then 
    \begin{align*}
        \widehat{\Phi}(f)=\mathrm{Ad}_{P^{-\frac{1}{2}}}\circ\Phi(f)=\sum_{j=1}^kf(w_j)Q_j,\qquad\forall~f\in C(\Omega).
    \end{align*}
    Hence, again by \cite[Proposition 2.2]{FaMo97}, we have $\widehat{\Phi}\in\mathrm{UCP}_{C^*-ext}(\mcl{A},\B{\mcl{H}})$ so that $\Phi \in \mathrm{CP}^{(P)}_{C^*-ext}(\mcl{A},\B{\mcl{H}})$.
\end{proof}

 Next, we prove a non-commutative analogue of the classical Krein-Milman theorem in the context of the $P$-$C^*$-convex set $\mathrm{CP}^{(P)}(\mcl{A},\B{\mcl{H}})$.

\begin{definition}
    Given a subset $\mcl{S}\subseteq\mathrm{CP}(\mcl{A},\B{\mcl{H}})$ and $P\in\B{\mcl{H}}_+$ we let 
    \begin{align*}
       P\mbox{-}C^*{\mbox{-}}con(\mcl{S}):=\Big\{\sum_{j=1}^n\mathrm{Ad}_{T_j}\circ\Phi_j: \Phi_j\in\mcl{S},T_j\in\B{\mcl{H}}\mbox{ such that }\sum_{j=1}^nT_j^*PT_j=P,n\geq 1 \Big\},
    \end{align*}
    and is called the \it{$P$-$C^*$-convex hull} of $\mcl{S}$.  If $P=I$, then we denote the above set simply by $C^*\mbox{-}con(\mcl{S})$ and is called the \it{$C^*$-convex hull} of $\mcl{S}$.
 \end{definition}

 We observe that $P\mbox{-}C^*{\mbox{-}}con(\mcl{S})$ is the smallest $P\mbox{-}C^*{\mbox{-}}$convex set containing $\mcl{S}$. The following theorem generalize \cite[Theorem 3.5]{FaMo97}, and \cite[Theorem 5.3]{BhKu22} and \cite[Theorem 7.14]{BBK21}.

\begin{theorem}\label{thm-KM-CP-P}
   Let $\mcl{A}$ be a unital $C^*$-algebra and $\mcl{H}$ be a separable Hilbert space such that one of the following happens:
   \begin{enumerate}[label=(\roman*)]
       \item $\mcl{H}$ is finite-dimensional; 
       \item $\mcl{A}$ is separable or type $I$ factor; 
       \item $\mcl{A}$ is commutative. 
   \end{enumerate}
   Then for any $P\in\B{\mcl{H}}_+$ invertible,
   \begin{align*}
       \mathrm{CP}^{(P)}(\mcl{A},\B{\mcl{H}})=
       \ol{P\mbox{-}C^*{\mbox{-}}con}\Big(\mathrm{CP}^{(P)}_{C^*-ext}(\mcl{A},\B{\mcl{H}})\Big),
   \end{align*}
   where the closure is taken with respect to the BW-topology on $\mathrm{CP}(\mcl{A},\B{\mcl{H}})$. 
\end{theorem}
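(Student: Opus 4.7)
The plan is to reduce the assertion to the already-known Krein--Milman theorems for the $C^*$-convex set $\mathrm{UCP}(\mcl{A},\B{\mcl{H}})$ via the bijection $\Phi\leftrightarrow\widehat{\Phi}$ afforded by the invertibility of $P$. Concretely, define $\Lambda:\mathrm{CP}^{(P)}(\mcl{A},\B{\mcl{H}})\to\mathrm{UCP}(\mcl{A},\B{\mcl{H}})$ by $\Lambda(\Phi):=\widehat{\Phi}=\mathrm{Ad}_{P^{-1/2}}\circ\Phi$, with inverse $\Psi\mapsto\mathrm{Ad}_{P^{1/2}}\circ\Psi$. Since $P^{\pm 1/2}\in\B{\mcl{H}}$ are bounded invertible operators, both $\Lambda$ and $\Lambda^{-1}$ are continuous in the BW-topology (the BW-topology on $\mathrm{CP}(\mcl{A},\B{\mcl{H}})$ is stable under composition with $\mathrm{Ad}_T$ for any $T\in\B{\mcl{H}}$), so $\Lambda$ is a BW-homeomorphism.

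Next I would verify that $\Lambda$ intertwines the two convex structures. Given a proper $P$-$C^*$-convex combination $\Phi=\sum_{j=1}^n\mathrm{Ad}_{T_j}\circ\Phi_j$ with $\sum_j T_j^*PT_j=P$, setting $S_j:=P^{1/2}T_jP^{-1/2}$ gives $\sum_j S_j^*S_j=I$ and
\begin{align*}
   \widehat{\Phi}
    &= \mathrm{Ad}_{P^{-1/2}}\Big(\sum_{j=1}^n\mathrm{Ad}_{T_j}\circ\Phi_j\Big)
     = \sum_{j=1}^n\mathrm{Ad}_{S_j}\circ\widehat{\Phi_j},
\end{align*}
which is a $C^*$-convex combination in $\mathrm{UCP}(\mcl{A},\B{\mcl{H}})$. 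The same calculation run backwards, together with the invertibility of $P^{1/2}$, shows that $\Lambda$ and $\Lambda^{-1}$ map (proper) $P$-$C^*$-convex combinations to (proper) $C^*$-convex combinations and vice versa. Consequently, for any subset $\mcl{S}\subseteq\mathrm{CP}^{(P)}(\mcl{A},\B{\mcl{H}})$,
\begin{align*}
    \Lambda\bigl(P\mbox{-}C^*\mbox{-}con(\mcl{S})\bigr) = C^*\mbox{-}con(\Lambda(\mcl{S})),
\end{align*}
and, by BW-continuity of $\Lambda^{\pm 1}$, the same identity holds after taking BW-closures. Moreover, Theorem \ref{thm-P-Cext-char} (equivalence of $(i)$ and $(ii)$) precisely says that $\Lambda$ restricts to a bijection from $\mathrm{CP}^{(P)}_{C^*-ext}(\mcl{A},\B{\mcl{H}})$ onto $\mathrm{UCP}_{C^*-ext}(\mcl{A},\B{\mcl{H}})$.

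With these preparations, the theorem is reduced to the corresponding statement for $\mathrm{UCP}(\mcl{A},\B{\mcl{H}})$, namely
\begin{align*}
   \mathrm{UCP}(\mcl{A},\B{\mcl{H}}) = \overline{C^*\mbox{-}con}\bigl(\mathrm{UCP}_{C^*-ext}(\mcl{A},\B{\mcl{H}})\bigr).
\end{align*}
This is exactly \cite[Theorem 3.5]{FaMo97} in case (i), \cite[Theorem 5.3]{BhKu22} in case (ii), and \cite[Theorem 7.14]{BBK21} in case (iii). Applying $\Lambda^{-1}$ to both sides of this identity and using the compatibility established above yields the desired equality. I do not anticipate a genuine obstacle here: the only mild subtlety is making sure that the BW-closure commutes with the homeomorphism $\Lambda^{-1}$, which is immediate from its continuity, and that the commutative case (iii) applies with the stated hypotheses (for a separable $\mcl{H}$ and general commutative $\mcl{A}$ the cited theorem of Banerjee--Bhat--Kumar is the relevant one). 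The remainder is bookkeeping.
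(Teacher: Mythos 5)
Your proposal is correct and is essentially the paper's own argument: the paper likewise transports the known Krein--Milman theorems for $\mathrm{UCP}(\mcl{A},\B{\mcl{H}})$ back to $\mathrm{CP}^{(P)}(\mcl{A},\B{\mcl{H}})$ by conjugating an approximating net with $\mathrm{Ad}_{P^{1/2}}$, using Theorem \ref{thm-P-Cext-char} to identify the $P$-$C^*$-extreme points with $\mathrm{Ad}_{P^{1/2}}$-conjugates of $C^*$-extreme UCP maps and the BW-continuity of $\mathrm{Ad}_{P^{1/2}}$ to pass to the closure. Your packaging of this as a BW-homeomorphism $\Lambda$ intertwining the two convex structures is just a slightly more abstract phrasing of the same computation.
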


\begin{proof}
    Let $\Phi\in\mathrm{CP}^{(P)}(\mcl{A},\B{\mcl{H}})$. Then $\widehat{\Phi}\in\mathrm{UCP}(\mcl{A},\B{\mcl{H}})$, and hence from \cite{FaMo97,BBK21,BhKu22}, there exists a net $\{\Psi_\alpha\}_\alpha\in C^*{\mbox{-}}con(\mathrm{UCP}_{C^*-ext}(\mcl{A},\B{\mcl{H}}))$ that converges to $\widehat{\Phi}$ in BW-topology. Write $\Psi_\alpha=\sum_{j=1}^{n_\alpha}\mathrm{Ad}_{T_{j,\alpha}}\circ\Psi_{j,\alpha}$ for some $\Psi_{j,\alpha}\in\mathrm{UCP}_{C^*-ext}(\mcl{A},\B{\mcl{H}})$, $n_\alpha\in\mbb{N}$ and $T_{j,\alpha}\in\B{\mcl{H}}$ with $\sum_{j=1}^{n_\alpha} T_{j,\alpha}^*T_{j,\alpha}=I_\mcl{H}$. Then, by Theorem \ref{thm-P-Cext-char}, $\widetilde{\Psi}_{j,\alpha}:= \mathrm{Ad}_{P^{\frac{1}{2}}}\circ\Psi_{j,\alpha} \in \mathrm{CP}^{(P)}_{C^*-ext}(\mcl{A},\B{\mcl{H}})$ and hence
    \begin{align*}
        \mathrm{Ad}_{P^{\frac{1}{2}}}\circ\Psi_\alpha =\sum_{j=1}^{n_\alpha}\mathrm{Ad}_{P^{-\frac{1}{2}}T_{j,\alpha}P^{\frac{1}{2}}}\circ\widetilde{\Psi}_{j,\alpha}\in P\mbox{-}C^*{\mbox{-}}con\big(\mathrm{CP}^{(P)}_{C^*-ext}(\mcl{A},\B{\mcl{H}})\big).
    \end{align*}
    Now, $\Psi_\alpha$ converges to $\widehat{\Phi}$ in BW-topology implies that $\mathrm{Ad}_{P^{\frac{1}{2}}}\circ\Psi_\alpha $ converges to $\mathrm{Ad}_{P^{\frac{1}{2}}}\circ\widehat{\Phi}=\Phi$ in BW-topology. Thus $\Phi\in \ol{P\mbox{-}C^*{\mbox{-}}con}\Big(\mathrm{CP}^{(P)}_{C^*-ext}(\mcl{A},\B{\mcl{H}})\Big)$. 
\end{proof}

\begin{corollary}\label{cor-KM-CP-P-fd}
    If $\mcl{H}$ is finite-dimensional, then
    \begin{align*}
       \mathrm{CP}^{(P)}(\mcl{A},\B{\mcl{H}})=
       \ol{P\mbox{-}C^*{\mbox{-}}con}\Big(\mathrm{CP}^{(P)}_{C^*-ext}(\mcl{A},\B{\mcl{H}})\Big),
   \end{align*}
    for all $P\in\B{\mcl{H}}_+$,  where the closure is taken w.r.t the BW-topology on $\mathrm{CP}(\mcl{A},\B{\mcl{H}})$. 
\end{corollary}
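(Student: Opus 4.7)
The plan is to reduce to the invertible-$P$ case already handled by Theorem \ref{thm-KM-CP-P}(i), using the block decomposition provided by Lemma \ref{lem-P-P0-Phi-Phi0} and the lifting mechanism from Proposition \ref{prop-Phi-Phi0}(ii). If $P=0$ the claim is trivial, since $\mathrm{CP}^{(P)}(\mcl{A},\B{\mcl{H}})=\{0\}$ and the zero map is vacuously a $P$-$C^*$-extreme point; if $P$ is invertible the statement is exactly Theorem \ref{thm-KM-CP-P}(i). So the only case requiring work is $0\neq P$ with $\ker{P}\neq\{0\}$.

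Write $\mcl{H}=\mcl{H}_0\oplus\mcl{H}_0^\perp$ with $\mcl{H}_0:=\ran{P}$ (closed since $\dim(\mcl{H})<\infty$), so that by Lemma \ref{lem-P-P0-Phi-Phi0} we have $P=\sMatrix{P_0&0\\0&0}$ with $P_0$ invertible, and any $\Phi\in\mathrm{CP}^{(P)}(\mcl{A},\B{\mcl{H}})$ takes the form $\Phi=\sMatrix{\Phi_0&0\\0&0}$ with $\Phi_0\in\mathrm{CP}^{(P_0)}(\mcl{A},\B{\mcl{H}_0})$. Applying Theorem \ref{thm-KM-CP-P}(i) on the finite-dimensional space $\mcl{H}_0$ produces a net of $P_0$-$C^*$-convex combinations
\[
    \Psi_\alpha=\sum_{j=1}^{n_\alpha}\mathrm{Ad}_{R_{j,\alpha}}\circ\Theta_{j,\alpha},\qquad \Theta_{j,\alpha}\in\mathrm{CP}^{(P_0)}_{C^*-ext}(\mcl{A},\B{\mcl{H}_0}),\quad \sum_j R_{j,\alpha}^*P_0R_{j,\alpha}=P_0,
\]
converging in the BW-topology to $\Phi_0$. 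Next I would lift everything to $\mcl{H}$ by setting $\Phi_{j,\alpha}:=\sMatrix{\Theta_{j,\alpha}&0\\0&0}$, which lies in $\mathrm{CP}^{(P)}_{C^*-ext}(\mcl{A},\B{\mcl{H}})$ thanks to Proposition \ref{prop-Phi-Phi0}(ii), together with $T_{1,\alpha}:=\sMatrix{R_{1,\alpha}&0\\0&I_{\mcl{H}_0^\perp}}$ and $T_{j,\alpha}:=\sMatrix{R_{j,\alpha}&0\\0&0}$ for $j\geq 2$.

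A direct block computation using the specific form of $P$ yields both the normalization $\sum_j T_{j,\alpha}^*PT_{j,\alpha}=P$ and the identity $\sum_j\mathrm{Ad}_{T_{j,\alpha}}\circ\Phi_{j,\alpha}=\sMatrix{\Psi_\alpha&0\\0&0}$, so each such finite sum lies in $P\mbox{-}C^*\mbox{-}con(\mathrm{CP}^{(P)}_{C^*-ext}(\mcl{A},\B{\mcl{H}}))$. The upper-left corner embedding $\B{\mcl{H}_0}\hookrightarrow\B{\mcl{H}}$ is weak-operator continuous, so $\Psi_\alpha\to\Phi_0$ in BW-topology on $\mcl{H}_0$ lifts to $\sMatrix{\Psi_\alpha&0\\0&0}\to\Phi$ in BW-topology on $\mcl{H}$, giving $\Phi\in\ol{P\mbox{-}C^*\mbox{-}con}(\mathrm{CP}^{(P)}_{C^*-ext}(\mcl{A},\B{\mcl{H}}))$. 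The reverse inclusion holds because $\mathrm{CP}^{(P)}(\mcl{A},\B{\mcl{H}})$ is itself a BW-compact (hence closed) $P$-$C^*$-convex set containing $\mathrm{CP}^{(P)}_{C^*-ext}(\mcl{A},\B{\mcl{H}})$. No serious obstacle is anticipated; the only care required is the bookkeeping of the block arithmetic to verify simultaneously the normalization $\sum_j T_{j,\alpha}^*PT_{j,\alpha}=P$ and the preservation of $P$-$C^*$-extremality under the lift, both of which are straightforward consequences of the vanishing of the lower-right block of $P$.
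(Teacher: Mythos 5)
Your proposal is correct and follows essentially the same route as the paper, which proves the corollary by combining Proposition \ref{prop-Phi-Phi0} (to reduce to the invertible compression $P_0$ on $\clran{P}$ and to lift extremality back) with Theorem \ref{thm-KM-CP-P} applied on the finite-dimensional space $\mcl{H}_0$. Your block computations verifying the normalization $\sum_j T_{j,\alpha}^*PT_{j,\alpha}=P$ and the BW-convergence of the lifted net are exactly the details the paper leaves implicit.
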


\begin{proof}
    Follows from Proposition \ref{prop-Phi-Phi0} and Theorem \ref{thm-KM-CP-P}. 
\end{proof}

\section{\texorpdfstring{$C^*$}{C*}-extreme points of contractive CP-maps}\label{sec-CCP-maps}

 Let $\msc{C}_1,\msc{C}_2$ be two $C^*$-convex subsets of  $\mathrm{CP}(\mcl{A},\B{\mcl{H}})$ such that $\msc{C}_1\subseteq\msc{C}_2$. Then, from definition of $C^*$-extreme points, it follows that 
 \begin{align}\label{eq-C-star-ext-subset}
     (\msc{C}_2)_{C^*-ext}\bigcap\msc{C}_1\subseteq(\msc{C}_1)_{C^*-ext}.
 \end{align}

  
\begin{proposition}\label{prop-UCP-C-Cstar}
 Let $\msc{C}$ be any $C^*$-convex set such that $\mathrm{UCP}(\mcl{A},\B{\mcl{H}})\subseteq\msc{C}\subseteq\mathrm{CCP}(\mcl{A},\B{\mcl{H}})$. Then
 \begin{align*}
     \mathrm{UCP}_{C^*-ext}(\mcl{A},\B{\mcl{H}})=\mathrm{UCP}(\mcl{A},\B{\mcl{H}})\bigcap \msc{C}_{C^*-ext}.
 \end{align*}  
 \end{proposition}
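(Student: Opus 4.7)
The plan is to prove the two inclusions separately. The inclusion $\supseteq$ is immediate from the general observation recorded in \eqref{eq-C-star-ext-subset}: since $\mathrm{UCP}(\mcl{A},\B{\mcl{H}})$ is itself a $C^*$-convex set sitting inside $\msc{C}$, applying \eqref{eq-C-star-ext-subset} with $\msc{C}_1=\mathrm{UCP}(\mcl{A},\B{\mcl{H}})$ and $\msc{C}_2=\msc{C}$ gives $\msc{C}_{C^*-ext}\cap\mathrm{UCP}(\mcl{A},\B{\mcl{H}})\subseteq\mathrm{UCP}_{C^*-ext}(\mcl{A},\B{\mcl{H}})$.

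For the nontrivial inclusion $\subseteq$, I would take $\Phi\in\mathrm{UCP}_{C^*-ext}(\mcl{A},\B{\mcl{H}})$. Clearly $\Phi\in\mathrm{UCP}(\mcl{A},\B{\mcl{H}})\subseteq\msc{C}$, so it suffices to verify $\Phi\in\msc{C}_{C^*-ext}$. Suppose $\Phi=\sum_{j=1}^{n}\mathrm{Ad}_{T_j}\circ\Phi_j$ is a proper $C^*$-convex decomposition inside $\msc{C}$, so $\Phi_j\in\msc{C}\subseteq\mathrm{CCP}(\mcl{A},\B{\mcl{H}})$, $T_j\in\B{\mcl{H}}$ invertible and $\sum_jT_j^*T_j=I$.

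The crux is to show each $\Phi_j$ is actually unital, so that the decomposition lies inside $\mathrm{UCP}(\mcl{A},\B{\mcl{H}})$ and the $C^*$-extremality of $\Phi$ there can be invoked. Evaluating at $1\in\mcl{A}$ gives $I=\Phi(1)=\sum_{j=1}^nT_j^*\Phi_j(1)T_j$, and since $\Phi_j$ is contractive, $I-\Phi_j(1)\geq 0$. Combined with $\sum_jT_j^*T_j=I$ this yields
\begin{align*}
    0=\sum_{j=1}^{n}T_j^*\bigl(I-\Phi_j(1)\bigr)T_j.
\end{align*}
Each summand is positive, so every summand vanishes, forcing $(I-\Phi_j(1))^{\frac{1}{2}}T_j=0$; invertibility of $T_j$ then gives $\Phi_j(1)=I$, i.e., $\Phi_j\in\mathrm{UCP}(\mcl{A},\B{\mcl{H}})$ for every $j$.

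Having reduced to a proper $C^*$-convex decomposition of $\Phi$ inside $\mathrm{UCP}(\mcl{A},\B{\mcl{H}})$, the hypothesis $\Phi\in\mathrm{UCP}_{C^*-ext}(\mcl{A},\B{\mcl{H}})$ supplies unitaries $U_j\in\B{\mcl{H}}$ with $\Phi_j=\mathrm{Ad}_{U_j}\circ\Phi$, completing the proof that $\Phi\in\msc{C}_{C^*-ext}$. There is no serious obstacle: the whole argument pivots on the single observation that unitality of the left-hand side propagates to all pieces of a proper $C^*$-convex decomposition by contractivity of the $\Phi_j$ and invertibility of the $T_j$.
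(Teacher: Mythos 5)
Your proof is correct and follows essentially the same route as the paper: the key step in both is that $I=\sum_j T_j^*\Phi_j(1)T_j$ together with $\sum_j T_j^*T_j=I$ forces $\sum_j T_j^*(I-\Phi_j(1))T_j=0$, whence positivity and invertibility of the $T_j$ give $\Phi_j(1)=I$, reducing to $C^*$-extremality in $\mathrm{UCP}(\mcl{A},\B{\mcl{H}})$. The only (harmless) differences are that you treat a general $n$-term decomposition directly while the paper works with $n=2$, and that you spell out the easy inclusion via \eqref{eq-C-star-ext-subset}, which the paper leaves implicit.
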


 \begin{proof}
     Let $\Phi \in \mathrm{UCP}_{C^*-ext}(\mcl{A},\B{\mcl{H}})$ and $\Phi=\sum_{j=1}^2\mathrm{Ad}_{b_j}\circ\Phi_j$ for some $\Phi_j\in\msc{C}$ and $T_j\in\B{\mcl{H}}$ invertible with $\sum_{j=1}^2T_j^*T_j=I$. Then 
     \begin{align*}
         \sum_{j=1}^2T_j^*T_j = I
                              = \Phi(1)
                              = \sum_{j=1}^2T_j^*\Phi_j(1)T_j
     \end{align*}
     that is, $\sum_{j=1}^2T_j^*\big(I - \Phi_j(1)\big)T_j = 0$. Then, since each $I-\Phi_j(1)\geq 0$, we must have $T_j^*(I- \Phi_j(1))T_j = 0$. Since $T_j$'s are invertible we get  $I- \Phi_j(1)=0$, i.e., $\Phi_j\in \mathrm{UCP}(\mcl{A},\B{\mcl{H}})$. Therefore, there exist unitaries $U_j \in \B{\mcl{H}}$ such that $\Phi_j=\mathrm{Ad}_{U_j}\circ\Phi$ for  $j=1,2$, and we conclude that $\Phi \in \msc{C}_{C^*-ext}$. 
 \end{proof}

\begin{remark}\label{rmk-UCP-ext-C-ext}
    If $\msc{C}$ is a convex set as in the above proposition, in a similar way, we can show that $\mathrm{UCP}_{ext}(\mcl{A},\B{\mcl{H}}) = \mathrm{UCP}(\mcl{A},\B{\mcl{H}}) \bigcap \msc{C}_{ext}$. 
\end{remark}

Now we prove a technical lemma which we need later.

\begin{lemma}\label{lem-norm-Phi-j}
     Let $\Phi\in\mathrm{CCP}(\mcl{A},\B{\mcl{H}})$ with $\norm{\Phi}=1$. If $\Phi=\mathrm{Ad}_{T_1}\circ\Phi_1+\mathrm{Ad}_{T_2}\circ\Phi_2$ is a proper $C^*$-convex decomposition of $\Phi$ with $\Phi_j\in\mathrm{CCP}(\mcl{A},\B{\mcl{H}})$ and $T_j\in\B{\mcl{H}}$, then $\norm{\Phi_j}=1$ for $j=1,2$.   
\end{lemma}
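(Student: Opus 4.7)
The plan is to argue by contradiction using the fact that for any CP map $\Psi:\mcl{A}\to\B{\mcl{H}}$, we have $\norm{\Psi}=\norm{\Psi(1)}$, so the norm question reduces to an operator inequality question at the identity.

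Setting $P:=\Phi(1)$ and $P_j:=\Phi_j(1)$ for $j=1,2$, the hypothesis $\norm{\Phi}=1$ gives $\norm{P}=1$, and since $\Phi_j\in\mathrm{CCP}(\mcl{A},\B{\mcl{H}})$ we already know $0\leq P_j\leq I$. Evaluating the proper $C^*$-convex decomposition at $1\in\mcl{A}$ yields the identity
\begin{align*}
    P = T_1^*P_1T_1 + T_2^*P_2T_2,\qquad T_1^*T_1+T_2^*T_2=I.
\end{align*}

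Suppose now, towards a contradiction, that $\norm{\Phi_1}<1$ (the case $j=2$ is symmetric). Then $P_1\leq cI$ for some scalar $c\in[0,1)$, so $T_1^*P_1T_1\leq cT_1^*T_1$. Combining with $T_2^*P_2T_2\leq T_2^*T_2=I-T_1^*T_1$, I obtain
\begin{align*}
    P \;\leq\; cT_1^*T_1 + (I-T_1^*T_1) \;=\; I-(1-c)T_1^*T_1.
\end{align*}
Since $T_1\in\B{\mcl{H}}$ is invertible, $T_1^*T_1$ is strictly positive, hence bounded below: there exists $\epsilon>0$ with $T_1^*T_1\geq\epsilon I$. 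Therefore $P\leq \bigl(1-(1-c)\epsilon\bigr)I$, which forces $\norm{P}\leq 1-(1-c)\epsilon<1$, contradicting $\norm{P}=1$. Thus $\norm{\Phi_1}=1$, and by the symmetric argument $\norm{\Phi_2}=1$ as well.

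I do not expect any obstacle here; the only ingredients are the standard identity $\norm{\Psi}=\norm{\Psi(1)}$ for CP maps, invertibility of $T_j$ (which gives a strict lower bound on $T_j^*T_j$), and basic operator-inequality manipulations. The key observation driving the proof is that if the norm of $\Phi_1(1)$ drops strictly below $1$, then the invertibility of $T_1$ forces a uniform gap in the inequality, incompatible with $\norm{\Phi(1)}=1$.
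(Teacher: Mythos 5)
Your argument is correct and is essentially the paper's own proof: both reduce to the identity $\Phi(1)=T_1^*\Phi_1(1)T_1+T_2^*\Phi_2(1)T_2$, bound $\Phi_j(1)\leq\norm{\Phi_j}I$, and use the strict lower bound $T_j^*T_j\geq \epsilon I$ coming from invertibility of $T_j$ to force $\norm{\Phi(1)}<1$ unless $\norm{\Phi_j}=1$. The only cosmetic difference is that you argue by contradiction one index at a time, while the paper handles both indices in a single chain of inequalities.
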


\begin{proof}
    Since $T_j^*T_j$ is positive invertible there exists a scalar $s_j\in (0,\infty)$ such that $T_j^*T_j\geq s_jI$. Now, since $\norm{\Phi_j(1)}=\norm{\Phi_j}\leq 1$, we get
    \begin{align*}
        \Phi(1) &={T_1}^*\Phi_1(1)T_1 + {T_2}^*\Phi_2(1)T_2 \\
                &\leq \norm{\Phi_1}T_1^*T_1+\norm{\Phi_2}T_2^*T_2 \\
                &= I - \big((1-\norm{\Phi_1})T_1^*T_1+(1-\norm{\Phi_2})T_2^*T_2\big)\\
                &\leq\big(1-(s_1(1-\norm{\Phi_1}) + s_2(1-\norm{\Phi_2}))\big)I.
    \end{align*}
    As $\norm{\Phi(1)}=\norm{\Phi}=1$, from the above, we get $s_1(1-\norm{\Phi_1}) + s_2(1-\norm{\Phi_2})=0$. Thus, $\norm{\Phi_1}=\norm{\Phi_2}=1$.
\end{proof}

\subsection{\texorpdfstring{$C^*$}{C*}-extreme points of $\mathrm{CCP}^\times$ maps}

To study $C^*$-extreme points $\Phi$ of CCP-maps, first we consider the special case when $\Phi(1)$ is invertible. 

\begin{lemma}
    The set
    \begin{align*}
        \mathrm{CCP}^\times(\mcl{A},\B{\mcl{H}})&:=\{\Phi\in\mathrm{CP}(\mcl{A},\B{\mcl{H}}):\Phi\mbox{ is contractive and }\Phi(1)\mbox{ is invertible}\}
    \end{align*}
    is a  $C^*$-convex set. 
\end{lemma}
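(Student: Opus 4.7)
The plan is straightforward: I need to verify that $\mathrm{CCP}^\times(\mcl{A},\B{\mcl{H}})$ is closed under $C^*$-convex combinations. Non-emptiness is immediate (for any state $\psi$, the map $a \mapsto \tfrac{1}{2}\psi(a)I$ lies in the set). So suppose $\Phi_j \in \mathrm{CCP}^\times(\mcl{A},\B{\mcl{H}})$ and $T_j \in \B{\mcl{H}}$, $1 \leq j \leq n$, with $\sum_{j=1}^n T_j^*T_j = I$, and set $\Phi := \sum_{j=1}^n \mathrm{Ad}_{T_j}\circ\Phi_j$. I need to show $\Phi \in \mathrm{CCP}^\times(\mcl{A},\B{\mcl{H}})$.

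First, $\Phi$ is CP as a finite sum of compositions of CP maps. Next, since each $\Phi_j$ is contractive we have $\Phi_j(1) \leq I$, hence
\begin{align*}
   \Phi(1) = \sum_{j=1}^n T_j^*\Phi_j(1)T_j \leq \sum_{j=1}^n T_j^*T_j = I,
\end{align*}
so $\Phi$ is contractive.

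The main point is to verify that $\Phi(1)$ is invertible, and this is where the hypothesis $\Phi_j(1)$ invertible is used crucially. Since each $\Phi_j(1) \in \B{\mcl{H}}_+$ is positive and invertible, there exists a scalar $s_j > 0$ such that $\Phi_j(1) \geq s_j I$. Setting $s := \min_{1\leq j\leq n} s_j > 0$, and using that $T_j^*T_j \geq 0$, we obtain $T_j^*\Phi_j(1)T_j \geq s_j T_j^*T_j \geq s\, T_j^*T_j$ for each $j$. Summing,
\begin{align*}
    \Phi(1) = \sum_{j=1}^n T_j^*\Phi_j(1)T_j \geq s \sum_{j=1}^n T_j^*T_j = sI,
\end{align*}
which shows $\Phi(1)$ is invertible. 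Hence $\Phi \in \mathrm{CCP}^\times(\mcl{A},\B{\mcl{H}})$, and the set is $C^*$-convex.

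There is no real obstacle here; the only subtle point is that finiteness of the sum is essential to produce the uniform lower bound $s = \min_j s_j$. An infinite $C^*$-convex combination could fail to preserve invertibility (the infimum might be zero), but this is not an issue for the finite sums considered in the definition of $C^*$-convexity.
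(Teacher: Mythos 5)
Your proof is correct and uses essentially the same idea as the paper: both arguments reduce to producing a uniform scalar lower bound $s>0$ with $\Phi_j(1)\geq sI$ for all $j$, whence $\Phi(1)=\sum_j T_j^*\Phi_j(1)T_j\geq s\sum_j T_j^*T_j=sI$. The paper merely packages the $T_j$ into a single isometry $T:\mcl{H}\to\oplus_j\mcl{H}$ and the $\Phi_j(1)$ into a block-diagonal invertible positive operator before making the identical estimate, so the two proofs differ only in presentation.
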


\begin{proof}
  Let $\Phi_j \in \mathrm{CCP}^\times(\mcl{A},\B{\mcl{H}})$ and $T_j\in\B{\mcl{H}}$ with $\sum_{j=1}^nT_j^*T_j=1$. Then
  \begin{align*}
      0\leq\sum_{j=1}^nT_j^*\Phi_j(1)T_j\leq \sum_{j=1}^nT_j^*T_j=I
  \end{align*}
  so that $\Phi:= \sum_{j=1}^n\mathrm{Ad}_{T_j}\circ\Phi_j$ is a contraction. Let
  \begin{align*}
      T:=\Matrix{T_1\\T_2\\\vdots\\T_n}\in\B{\mcl{H},\oplus_{j=1}^n\mcl{H}},
      \qquad
      P:=\Matrix{\Phi_1(1)&&&\\&\Phi_2(1)&&\\&&\ddots&\\&&&\Phi_n(1)}\in\B{\oplus_{j=1}^n\mcl{H}}.
  \end{align*}
  Note that $\Phi(1) = \sum_{j=1}^n {T_j}^*\Phi_j(1)T_j = T^*PT$. 
  Since $P$ is positive and invertible, there exists a scalar $\alpha>0$ such that $\ip{z,Pz}\geq\alpha\norm{z}^2$ for all $z\in\oplus_{j=1}^n\mcl{H}$. Now, since $T$ is an isometry we have  
  \begin{align*}
      \ip{x,T^*PTx}= \ip{Tx,PTx}\geq \alpha \norm{Tx}^2= \alpha \norm{x}^2,\qquad\forall~x\in\mcl{H}.
  \end{align*}
  Hence $\Phi(1)=T^*PT$ is a positive invertible element so that $\Phi \in \mathrm{CCP}^\times(\mcl{A},\B{\mcl{H}})$. Therefore, $\mathrm{CCP}^\times(\mcl{A},\B{\mcl{H}})$ is a $C^*$-convex set. 
\end{proof}


Our main aim is to determine the structure of $C^*$-extreme points of CCP maps. Note that if $\Phi\in\mathrm{CCP}_{C^*-ext}(\mcl{A},\B{\mcl{H}})$ and $\Phi(1)$ is invertible, then by \eqref{eq-C-star-ext-subset}, $\Phi\in\mathrm{CCP}^\times_{C^*-ext}(\mcl{A},\B{\mcl{H}})$. This observation leads us to the following:

\begin{proposition}\label{prop-CCP-prime=UCP-ext}
    \begin{align*}
      \mathrm{CCP}^\times_{C^*-ext}(\mcl{A},\B{\mcl{H}}) = \mathrm{UCP}_{C^*-ext}(\mcl{A},\B{\mcl{H}}) 
    \end{align*} 
\end{proposition}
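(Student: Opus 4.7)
The plan is to reduce the equality, via Proposition \ref{prop-UCP-C-Cstar}, to the single claim that every element of $\mathrm{CCP}^\times_{C^*-ext}(\mcl{A},\B{\mcl{H}})$ is automatically unital. Applying that proposition with $\msc{C} = \mathrm{CCP}^\times(\mcl{A},\B{\mcl{H}})$, which lies between $\mathrm{UCP}$ and $\mathrm{CCP}$ since a UCP map is contractive with invertible value at $1$, yields
\[\mathrm{UCP}_{C^*-ext}(\mcl{A},\B{\mcl{H}}) = \mathrm{UCP}(\mcl{A},\B{\mcl{H}}) \cap \mathrm{CCP}^\times_{C^*-ext}(\mcl{A},\B{\mcl{H}}).\]
Consequently the proposition is equivalent to the inclusion $\mathrm{CCP}^\times_{C^*-ext}(\mcl{A},\B{\mcl{H}}) \subseteq \mathrm{UCP}(\mcl{A},\B{\mcl{H}})$.

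To prove this inclusion, fix $\Phi \in \mathrm{CCP}^\times_{C^*-ext}(\mcl{A},\B{\mcl{H}})$ and set $P := \Phi(1)$, a positive invertible operator with $0 < P \leq I$. The strategy is to manufacture a non-tautological proper $C^*$-convex decomposition of $\Phi$ in $\mathrm{CCP}^\times$ in which the unitalisation $\widehat{\Phi}$ from \eqref{eq-Phi-hat} appears as a component. Fix $t \in (0,1)$ and put
\[T_1 := \sqrt{t}\, P^{1/2}, \quad T_2 := (I - tP)^{1/2}, \quad \Phi_2 := (1-t)\, \mathrm{Ad}_{T_2^{-1}} \circ \Phi.\]
Both $T_1$ and $T_2$ are positive and invertible (the latter because $I - tP \geq (1-t)I > 0$), and $T_1^* T_1 + T_2^* T_2 = tP + (I - tP) = I$. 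A direct calculation gives $\mathrm{Ad}_{T_1} \circ \widehat{\Phi} = t\Phi$ and $\mathrm{Ad}_{T_2} \circ \Phi_2 = (1-t)\Phi$, so that $\Phi = \mathrm{Ad}_{T_1} \circ \widehat{\Phi} + \mathrm{Ad}_{T_2} \circ \Phi_2$. Furthermore $\widehat{\Phi}(1) = I$ while $\Phi_2(1) = (1-t)(I-tP)^{-1}P$, which is positive, invertible and $\leq I$ (the last inequality amounting to $P \leq I$), so both components belong to $\mathrm{CCP}^\times(\mcl{A},\B{\mcl{H}})$ and the decomposition is proper.

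The $C^*$-extremality of $\Phi$ then produces a unitary $U_1 \in \B{\mcl{H}}$ with $\widehat{\Phi} = \mathrm{Ad}_{U_1} \circ \Phi$; evaluating at $1 \in \mcl{A}$ yields $I = U_1^* P U_1$, whence $P = I$ and $\Phi \in \mathrm{UCP}(\mcl{A},\B{\mcl{H}})$. The only genuine obstacle I anticipate is choosing the right non-trivial decomposition: writing $\Phi = t\Phi + (1-t)\Phi$ forces nothing about $P$, whereas the coupling $T_1 = \sqrt{t}P^{1/2}$, $T_2 = (I - tP)^{1/2}$ is precisely what makes $\widehat{\Phi}$ emerge inside a $C^*$-convex decomposition of $\Phi$ itself, after which the unitary equivalence imposed by $C^*$-extremality pins down $P = I$.
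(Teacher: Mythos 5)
Your proposal is correct and is essentially the paper's own argument: the paper also reduces via Proposition \ref{prop-UCP-C-Cstar} to showing $\Phi(1)=I$, and uses exactly your decomposition at $t=\tfrac12$, namely $T_1=\tfrac{1}{\sqrt2}P^{1/2}$, $T_2=\tfrac{1}{\sqrt2}(2I-P)^{1/2}$ with $\widehat{\Phi}$ as the first component, so that the unitary equivalence forced by $C^*$-extremality gives $P=U^*IU=I$. Your version with a general parameter $t\in(0,1)$ is a harmless generalization of the same construction.
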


\begin{proof} 
    By Proposition \ref{prop-UCP-C-Cstar} we have $\mathrm{UCP}_{C^*-ext}(\mcl{A},\B{\mcl{H}}) \subseteq  \mathrm{CCP}^\times_{C^*-ext}(\mcl{A},\B{\mcl{H}}).$
    To prove the reverse inequality, let $\Phi \in \mathrm{CCP}^\times_{C^*-ext}(\mcl{A},\B{\mcl{H}})$ and $\Phi(1)=P$. Then, $\Phi = \mathrm{Ad}_{T_1}\circ\Phi_1 + \mathrm{Ad}_{T_2}\circ\Phi_2$ is a proper $C^*$-convex decomposition of $\Phi$, where $T_1 = \frac{1}{\sqrt{2}}P^{\frac{1}{2}}$, $T_2 = \frac{1}{\sqrt{2}}(2I-P)^{\frac{1}{2}}$ and $\Phi_j= \mathrm{Ad}_{\frac{1}{\sqrt{2}}{T_j}^{-1}}\circ\Phi \in \mathrm{CCP}^\times(\mcl{A},\B{\mcl{H}})$. Hence there exists a unitary $U \in \B{\mcl{H}}$ such that $\Phi= \mathrm{Ad}_U\circ\Phi_1$. In particular, $P=\Phi(1)=U^*\Phi_1(1)U=I$ so that $\Phi$ is unital. Hence, again by Proposition \ref{prop-UCP-C-Cstar}, we have $\Phi \in \mathrm{UCP}_{C^*-ext}(\mcl{A},\B{\mcl{H}})$.
\end{proof}

We conclude this subsection with a brief analysis of the linear extreme points of $\mathrm{CCP}^\times$ maps. Observe that 
    \begin{align*}
        \mathrm{CCP}^\times(\mcl{A},\B{\mcl{H}})&=\bigcup_{\substack{P\in\B{\mcl{H}}_{+}^{inv}\\\norm{P}\leq 1}}\mathrm{CP}^{(P)}(\mcl{A},\B{\mcl{H}}),
    \end{align*}
    where $\B{\mcl{H}}_{+}^{inv}$ denotes the set of all positive invertible elements in $\B{\mcl{H}}$.

 \begin{proposition}\label{prop-CCP-prime-ext-bounds} 
    \begin{align*}
      \mathrm{UCP}_{ext}(\mcl{A},\B{\mcl{H}}) \subseteq \mathrm{CCP}^\times_{ext}(\mcl{A},\B{\mcl{H}}) \subseteq \bigcup_{P\in\B{\mcl{H}}_+^{inv}, \norm{P}=1}\mathrm{CP}^{(P)}_{ext}(\mcl{A},\B{\mcl{H}})
    \end{align*} 
\end{proposition}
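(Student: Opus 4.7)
The plan is to prove the two inclusions separately, leveraging structural facts about $\mathrm{CCP}^\times(\mcl{A},\B{\mcl{H}})$ that have already been established.

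For the first inclusion $\mathrm{UCP}_{ext}(\mcl{A},\B{\mcl{H}})\subseteq\mathrm{CCP}^\times_{ext}(\mcl{A},\B{\mcl{H}})$, I would note that $\mathrm{CCP}^\times(\mcl{A},\B{\mcl{H}})$ is a convex set sandwiched as $\mathrm{UCP}(\mcl{A},\B{\mcl{H}})\subseteq\mathrm{CCP}^\times(\mcl{A},\B{\mcl{H}})\subseteq\mathrm{CCP}(\mcl{A},\B{\mcl{H}})$, since any unital CP map is automatically contractive with invertible value at $1$. Remark \ref{rmk-UCP-ext-C-ext} applied with $\msc{C}=\mathrm{CCP}^\times(\mcl{A},\B{\mcl{H}})$ then gives the identity $\mathrm{UCP}_{ext}(\mcl{A},\B{\mcl{H}})=\mathrm{UCP}(\mcl{A},\B{\mcl{H}})\cap\mathrm{CCP}^\times_{ext}(\mcl{A},\B{\mcl{H}})$, from which the desired inclusion is immediate.

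For the second inclusion, given $\Phi\in\mathrm{CCP}^\times_{ext}(\mcl{A},\B{\mcl{H}})$, set $P:=\Phi(1)$, which is positive and invertible by definition of $\mathrm{CCP}^\times$. Two things remain to check: that $\norm{P}=1$, and that $\Phi$ is a linear extreme point of the smaller convex set $\mathrm{CP}^{(P)}(\mcl{A},\B{\mcl{H}})$.

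The step I expect to be the main subtle point is verifying $\norm{P}=1$. Here I would argue by contradiction: if $\norm{P}<1$, pick $\epsilon>0$ small enough so that $(1\pm\epsilon)P\leq I$, and write the proper convex decomposition
\begin{align*}
    \Phi=\tfrac{1}{2}\big((1+\epsilon)\Phi\big)+\tfrac{1}{2}\big((1-\epsilon)\Phi\big).
\end{align*}
Both summands are CP maps, contractive (since $(1\pm\epsilon)P\leq I$), with value $(1\pm\epsilon)P$ at $1$, which remains positive and invertible. So both lie in $\mathrm{CCP}^\times(\mcl{A},\B{\mcl{H}})$. Since $P$ is invertible the map $\Phi$ is nonzero, so the two summands are distinct, contradicting the extremality of $\Phi$. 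Hence $\norm{P}=1$.

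The final step is routine: if $\Phi=t\Phi_1+(1-t)\Phi_2$ is a proper convex combination with $\Phi_j\in\mathrm{CP}^{(P)}(\mcl{A},\B{\mcl{H}})$ and $t\in(0,1)$, then each $\Phi_j(1)=P$ is invertible and satisfies $P\leq\norm{P}I=I$, so $\Phi_j\in\mathrm{CCP}^\times(\mcl{A},\B{\mcl{H}})$. The extremality of $\Phi$ in $\mathrm{CCP}^\times(\mcl{A},\B{\mcl{H}})$ then forces $\Phi_1=\Phi_2=\Phi$, giving $\Phi\in\mathrm{CP}^{(P)}_{ext}(\mcl{A},\B{\mcl{H}})$ with $P$ positive invertible and $\norm{P}=1$, as required.
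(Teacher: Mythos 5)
Your proof is correct and follows essentially the same route as the paper: the first inclusion via Remark \ref{rmk-UCP-ext-C-ext} applied to $\msc{C}=\mathrm{CCP}^\times(\mcl{A},\B{\mcl{H}})$, and the second by observing that extremality in $\mathrm{CCP}^\times$ passes to the smaller convex set $\mathrm{CP}^{(P)}(\mcl{A},\B{\mcl{H}})$ together with a contradiction argument ruling out $\norm{P}<1$ by writing $\Phi$ as a proper convex combination of two distinct scalar multiples of itself. Your $(1\pm\epsilon)$ decomposition is just a reparametrization of the paper's choice of $s\neq t$ with $\norm{P}=\tfrac{1}{2}s+\tfrac{1}{2}t$.
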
 

\begin{proof}
   The first inclusion follows from Remark \ref{rmk-UCP-ext-C-ext}. Now, to see the second inclusion, let $\Phi\in \mathrm{CCP}^\times_{ext}(\mcl{A},\B{\mcl{H}})$. Then it follows from the definition of extreme points that $\Phi\in\mathrm{CP}^{(P)}_{ext}(\mcl{A},\B{\mcl{H}})$, where $P:=\Phi(1)\in\B{\mcl{H}}_+^{inv}$ is a contraction so that $0 < \norm{P}\leq 1$. If possible assume that $0<\norm{P}<1$. Then there exists $s\neq t \in (0,1)\backslash\{\norm{P}\}$ such that $\norm{P}= \frac{1}{2}s + \frac{1}{2}t$. Thus, $\Phi=\frac{1}{2}\big(\frac{s}{\norm{P}}\Phi\big) + \frac{1}{2}\big(\frac{t}{\norm{P}}\Phi\big)$ is a proper convex combination of $\mathrm{CCP}^\times$-maps, which leads to a contradiction since $\Phi$ is an extreme point. Therefore $\norm{P}=1$. 
\end{proof}

\begin{example}\label{eg-CCP-prime-CP-proper} 
 The inclusions in the above proposition can be possibly strict. To see this let $I\neq P\in\B{\mcl{H}}_+^{inv}$ be such that $\norm{P}=1$ and  $\dim(\mcl{H})>1$. 
 \begin{enumerate}[label=(\roman*)]
    \item  If $\Phi\in\mathrm{CCP}^\times(\mcl{A},\B{\mcl{H}})$ is a pure CP-map with $\norm{\Phi}=1$, then $\Phi\in\mathrm{CCP}^\times_{ext}(\mcl{A},\B{\mcl{H}})$. For, let  $\Phi=t\Phi_1 + (1-t)\Phi_2$ be a proper convex combination of $\Phi_j \in \mathrm{CCP}^\times(\mcl{A,\B{H}})$. Then $t\Phi_1 \leq_{cp} \Phi$ so that $t\Phi_1=s\Phi$ for some $s\in [0,1]$. Therefore $t\norm{\Phi_1(1)}=s\norm{\Phi}$, and using Lemma \ref{lem-norm-Phi-j} we conclude that $s=t$, i.e., $\Phi_1=\Phi$.  Similarly, we can show $\Phi_2=\Phi$. Hence, $\Phi \in \mathrm{CCP}^\times_{ext}(\mcl{A,\B{H}})$. In particular, $\Phi:=\mathrm{Ad}_{P^{\frac{1}{2}}}$ is a pure CP-map on $\B{\mcl{H}}$ with $\norm{\Phi}=\norm{P}=1$, and hence linear extreme point of $\mathrm{CCP}^\times(\B{\mcl{H}},\B{\mcl{H}})$, but $\Phi$ is not unital. This shows that the first inclusion can be strict.  
    \item Let $\psi: \mcl{A}\to \mbb{C}$ be a pure state. Consider the CP-map $\Phi:\mcl{A} \to \B{\mcl{H}}$ defined by $\Phi(\cdot) := \psi(\cdot)P$. By Proposition \ref{prop-PC-ext-non-empty} we have $\Phi\in \mathrm{CP}^{(P)}_{ext}(\mcl{A,\B{H}})$. Now let $\Phi_1(\cdot):= \psi(\cdot)P^2$ and $\Phi_2(\cdot):= \psi(\cdot)(2P-P^2)$. Note that $\Phi_1,\Phi_2\in\mathrm{CCP}^\times(\mcl{A},\B{\mcl{H}})$ as $P^2$ and $2P-P^2$ are positive invertible contractions. Clearly $\Phi = \frac{1}{2}\Phi_1 + \frac{1}{2}\Phi_2$ so that $\Phi \notin \mathrm{CCP}^\times_{ext}(\mcl{A},\B{\mcl{H}})$. Thus, the second inclusion in the above proposition is not equality in general. 
 \end{enumerate}
\end{example}

\begin{example}\label{eg-CC-prime-UCP-ext}
    If $\Phi\in\mathrm{CCP}^\times_{ext}(\mcl{A},\B{\mcl{H}})$, then one can easily see that $\widehat{\Phi}\in\mathrm{UCP}_{ext}(\mcl{A},\B{\mcl{H}})$. But the converse is not true in general. For example, let $P=\frac{1}{2}I \in \B{\mcl{H}}$ and fix $\Psi\in\mathrm{UCP}_{ext}(\mcl{A},\B{\mcl{H}})\neq\emptyset$. We observe that $\Phi:=\frac{1}{2}\Psi \in \mathrm{CP}^{(P)}_{ext}(\mcl{A},\B{\mcl{H}})$ and $\widehat{\Phi} = \Psi \in \mathrm{UCP}_{ext}(\mcl{A},\B{\mcl{H}})$. Since $\norm{P}<1$, by the above proposition, $\Phi \notin \mathrm{CCP}^\times_{ext}(\mcl{A},\B{\mcl{H}})$.  
\end{example}

\begin{corollary} 
 If $\dim(\mcl{H}) < \infty$, then,
    \begin{align*}
        \mathrm{CCP}^\times_{C^*-ext}(\mcl{A},\B{\mcl{H}}) \subseteq \mathrm{CCP}^\times_{ext}(\mcl{A},\B{\mcl{H}})
    \end{align*}
\end{corollary}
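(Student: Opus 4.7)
The plan is to chain together two results already established: Proposition \ref{prop-CCP-prime=UCP-ext}, which identifies $\mathrm{CCP}^\times_{C^*-ext}(\mcl{A},\B{\mcl{H}})$ with $\mathrm{UCP}_{C^*-ext}(\mcl{A},\B{\mcl{H}})$, and Proposition \ref{prop-FaMo}(i), which says that in finite dimensions $\mathrm{UCP}_{C^*-ext}(\mcl{A},\B{\mcl{H}})\subseteq \mathrm{UCP}_{ext}(\mcl{A},\B{\mcl{H}})$. Combining these, any $\Phi\in\mathrm{CCP}^\times_{C^*-ext}(\mcl{A},\B{\mcl{H}})$ is a unital linear extreme point of $\mathrm{UCP}(\mcl{A},\B{\mcl{H}})$. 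The remaining task is to promote this to linear extremality inside the larger convex set $\mathrm{CCP}^\times(\mcl{A},\B{\mcl{H}})$.

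For that last step, I would take a proper convex decomposition $\Phi = t\Phi_1 + (1-t)\Phi_2$ with $\Phi_j\in\mathrm{CCP}^\times(\mcl{A},\B{\mcl{H}})$ and $t\in(0,1)$, and evaluate at the unit: since $\Phi(1)=I$ and each $\Phi_j(1)\leq I$, the identity
\begin{align*}
  t\bigl(I-\Phi_1(1)\bigr) + (1-t)\bigl(I-\Phi_2(1)\bigr) = 0
\end{align*}
forces $\Phi_j(1)=I$ for $j=1,2$, so $\Phi_j\in\mathrm{UCP}(\mcl{A},\B{\mcl{H}})$. Then the linear extremality of $\Phi$ inside $\mathrm{UCP}(\mcl{A},\B{\mcl{H}})$ (which we already extracted from Proposition \ref{prop-FaMo}(i), or alternatively by invoking Remark \ref{rmk-UCP-ext-C-ext}) gives $\Phi_1=\Phi_2=\Phi$, hence $\Phi\in\mathrm{CCP}^\times_{ext}(\mcl{A},\B{\mcl{H}})$.

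There is essentially no obstacle here; the proof is three lines once the correct sequence of prior results is identified. The only thing to be slightly careful about is that the decomposition is taken inside $\mathrm{CCP}^\times(\mcl{A},\B{\mcl{H}})$ rather than $\mathrm{UCP}(\mcl{A},\B{\mcl{H}})$, which is why the argument at the unit (reducing to unital components) is needed before appealing to Proposition \ref{prop-FaMo}(i). Note that finite-dimensionality is used only through Proposition \ref{prop-FaMo}(i); it is not needed for the unit-reduction step.
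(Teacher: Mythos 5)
Your proposal is correct and follows essentially the same route as the paper: the paper's proof cites Proposition \ref{prop-CCP-prime=UCP-ext}, Proposition \ref{prop-FaMo}(i), and the first inclusion of Proposition \ref{prop-CCP-prime-ext-bounds}, and your unit-evaluation argument is exactly the (correct) proof of that last inclusion, merely written out inline instead of cited.
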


\begin{proof}
   Follows from Propositions \ref{prop-CCP-prime=UCP-ext}, \ref{prop-FaMo} and \ref{prop-CCP-prime-ext-bounds}.
\end{proof}


\subsection{\texorpdfstring{$C^*$}{C*}-extreme points of $\mathrm{CCP}$ maps}

 Recall that, from Proposition \ref{prop-UCP-C-Cstar}, we have 
 $$\mathrm{UCP}_{C^*-ext}(\mcl{A},\B{\mcl{H}})\subseteq\mathrm{CCP}_{C^*-ext}(\mcl{A},\B{\mcl{H}}).$$
  We know that $\mathrm{UCP}_{C^*-ext}(\mcl{A},\B{\mcl{H}})$ is non-empty $C^*$-convex set (see \cite{FaMo97}), and hence $\mathrm{CCP}_{C^*-ext}(\mcl{A},\B{\mcl{H}})$ is also non-empty. Note that if $0\neq \Phi\in\mathrm{CCP}_{C^*-ext}(\mcl{A,\B{\mcl{H}}})$, then $\norm{\Phi}=1$. For, if $0 < \norm{\Phi}< 1$, then as in Proposition \ref{prop-CCP-prime-ext-bounds}, we choose $s\neq t \in (0,1)\backslash\{\norm{\Phi}\}$ such that $\norm{\Phi}= \frac{1}{2}s + \frac{1}{2}t$. Thus $\Phi=\frac{1}{2}\big(\frac{s}{\norm{\Phi}}\Phi\big) + \frac{1}{2}\big(\frac{t}{\norm{\Phi}}\Phi\big)$ is a proper $C^*$-convex combination of $\mathrm{CCP}$-maps. But, $\Phi$ is not unitarily equivalent to $\frac{s}{\norm{\Phi}}\Phi$ as their norms are different. This is a contradiction. Hence $\norm{\Phi}=1$.

\begin{lemma}\label{lem-CCP-proj}
    Let $\Phi \in \mathrm{CCP}_{C^*-ext}(\mcl{A},\B{\mcl{H}})$ and $\ran{\Phi(1)}$ is closed. Then $\Phi(1)$ is a projection.
\end{lemma}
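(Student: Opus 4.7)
The strategy is to exhibit a proper $C^*$-convex decomposition $\Phi = \mathrm{Ad}_{T_1}\circ\Phi_1 + \mathrm{Ad}_{T_2}\circ\Phi_2$ inside $\mathrm{CCP}(\mcl{A},\B{\mcl{H}})$ in which $\Phi_1(1)$ is \emph{already} a projection; the $C^*$-extremality of $\Phi$ then produces a unitary $U_1$ with $U_1^*\Phi(1)U_1 = \Phi_1(1)$, which forces $\Phi(1)$ to be unitarily equivalent to a projection, hence itself a projection.

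Write $P := \Phi(1)$ and $\mcl{H}_0 := \ran{P}$, which is closed by hypothesis. By Lemma \ref{lem-P-P0-Phi-Phi0}, with respect to $\mcl{H} = \mcl{H}_0 \oplus \mcl{H}_0^\perp$,
\begin{align*}
    P = \sMatrix{P_0 & 0 \\ 0 & 0}, \qquad \Phi = \sMatrix{\Phi_0 & 0 \\ 0 & 0},
\end{align*}
where $P_0\in\B{\mcl{H}_0}$ is a positive invertible contraction and $\Phi_0 \in \mathrm{CP}^{(P_0)}(\mcl{A},\B{\mcl{H}_0})$; in particular $\widehat{\Phi}_0 = \mathrm{Ad}_{P_0^{-1/2}}\circ\Phi_0 \in \mathrm{UCP}(\mcl{A},\B{\mcl{H}_0})$ is well defined. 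I would then set
\begin{align*}
    T_1 &:= \tfrac{1}{\sqrt{2}}\sMatrix{P_0^{1/2} & 0 \\ 0 & I}, &
    T_2 &:= \tfrac{1}{\sqrt{2}}\sMatrix{(2I_{\mcl{H}_0}-P_0)^{1/2} & 0 \\ 0 & I}, \\
    \Phi_1 &:= \sMatrix{\widehat{\Phi}_0 & 0 \\ 0 & 0}, &
    \Phi_2(a) &:= \sMatrix{(2I_{\mcl{H}_0}-P_0)^{-1/2}\Phi_0(a)(2I_{\mcl{H}_0}-P_0)^{-1/2} & 0 \\ 0 & 0}.
\end{align*}
Since $0<P_0\leq I_{\mcl{H}_0}$ gives $2I_{\mcl{H}_0}-P_0\geq I_{\mcl{H}_0}$, both $T_j\in\B{\mcl{H}}$ are invertible. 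A short block-matrix check verifies $T_1^*T_1+T_2^*T_2 = I_{\mcl{H}}$ and $\mathrm{Ad}_{T_1}\circ\Phi_1 = \mathrm{Ad}_{T_2}\circ\Phi_2 = \tfrac{1}{2}\Phi$, so that $\Phi = \mathrm{Ad}_{T_1}\circ\Phi_1 + \mathrm{Ad}_{T_2}\circ\Phi_2$; moreover contractivity of $\Phi_2$ reduces to $P_0 \leq 2I_{\mcl{H}_0}-P_0$, i.e.\ to $P_0 \leq I_{\mcl{H}_0}$, which holds. Hence this is a proper $C^*$-convex decomposition in $\mathrm{CCP}(\mcl{A},\B{\mcl{H}})$. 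Applying the hypothesis $\Phi \in \mathrm{CCP}_{C^*-ext}(\mcl{A},\B{\mcl{H}})$ yields a unitary $U_1\in\B{\mcl{H}}$ with $\Phi_1 = \mathrm{Ad}_{U_1}\circ\Phi$; evaluating at $1\in\mcl{A}$ gives $U_1^* P U_1 = \Phi_1(1) = \sMatrix{I_{\mcl{H}_0} & 0 \\ 0 & 0}$, a projection, and so $P = U_1\Phi_1(1)U_1^*$ is a projection.

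The main obstacle is designing the decomposition so that $\Phi_1(1)$ is already the projection onto $\mcl{H}_0$: once that is arranged, unitary equivalence does the rest. The factor $P_0^{1/2}$ in $T_1$ is chosen precisely to make $\mathrm{Ad}_{T_1}\circ\Phi_1 = \tfrac{1}{2}\Phi$ while keeping $\Phi_1(1)$ a projection; the factor $(2I_{\mcl{H}_0}-P_0)^{1/2}$ in $T_2$ is then forced by the normalization $T_1^*T_1+T_2^*T_2 = I_{\mcl{H}}$, and $\Phi_2$ is determined by matching $\mathrm{Ad}_{T_2}\circ\Phi_2 = \tfrac{1}{2}\Phi$. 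The closed-range assumption enters crucially here, since it guarantees $P_0$ is invertible so that $\widehat{\Phi}_0$ and $(2I_{\mcl{H}_0}-P_0)^{-1/2}$ are defined.
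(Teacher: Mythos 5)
Your proposal is correct and follows essentially the same route as the paper: the paper uses exactly the operators $T_1=\tfrac{1}{\sqrt{2}}\sMatrix{P_0^{1/2}&0\\0&I}$, $T_2=\tfrac{1}{\sqrt{2}}\sMatrix{(2I-P_0)^{1/2}&0\\0&I}$ and the maps $\Phi_j=\mathrm{Ad}_{\frac{1}{\sqrt 2}T_j^{-1}}\circ\Phi$ (which coincide with your $\Phi_1,\Phi_2$), then invokes $C^*$-extremality to get a unitary carrying $\Phi(1)$ to the projection $\Phi_1(1)$. The only cosmetic difference is that the paper treats the case $\ker{\Phi(1)}=\{0\}$ separately via Proposition \ref{prop-CCP-prime=UCP-ext}, whereas your construction handles it uniformly.
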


\begin{proof}
    Let $P:=\Phi(1)$ so that $\Phi\in\mathrm{CP}^{(P)}(\mcl{A},\B{\mcl{H}})$. If $P=0$, then nothing to prove. So assume $P\neq 0$. Now, if $\ker{P}=\{0\}$, then $P$ is invertible so that  
    \begin{align*}
       \Phi \in \mathrm{CCP}_{C^*-ext}(\mcl{A},\B{\mcl{H}}) 
\bigcap\mathrm{CCP}^\times(\mcl{A},\B{\mcl{H}})\subseteq\mathrm{CCP}^\times_{C^*-ext}(\mcl{A},\B{\mcl{H}}).
    \end{align*}
    Hence, by Prop \ref{prop-CCP-prime=UCP-ext},  $\Phi(1)=I$.
     Now, if $\ker{P}\neq\{0\}$, then let $\mcl{H}_0,P_0,\Phi_0$ be as in Lemma \ref{lem-P-P0-Phi-Phi0}. Set 
 \begin{align*}
     T_1=\frac{1}{\sqrt{2}}\Matrix{P_0^{\frac{1}{2}}&0\\0&I}
     \qquad\mbox{and}\qquad
     T_2=\frac{1}{\sqrt{2}}\Matrix{(2I-P_0)^{\frac{1}{2}}&0\\0&I}
 \end{align*}   
    in $\B{\mcl{H}}$ and let $\Phi_j:= \mathrm{Ad}_{\frac{1}{\sqrt{2}}{T_j}^{-1}}\circ\Phi \in \mathrm{CCP}(\mcl{A},\B{\mcl{H}})$ for $j=1,2$. Then $\Phi = \sum_{j=1}^2\mathrm{Ad}_{T_j}\circ\Phi_j$ is a proper $C^*$-convex decomposition, and hence there exists a unitary $U \in \B{\mcl{H}}$ such that $\Phi= \mathrm{Ad}_U\circ\Phi_1$. Since $\Phi_1(1)=\sMatrix{I&0\\0&0}$ is a projection, it follows that $\Phi(1)$ is also a projection.
\end{proof}

 In the above lemma, we are uncertain whether the assumption that $\ran{\Phi(1)}$ is closed follows automatically if $\Phi$ is a $C^*$-extreme point.

\begin{lemma}\label{lem-proj-unitary}
    Let $S\in\B{\mcl{H}}$ be invertible, $P\in\B{\mcl{H}}$ be a projection and $\Phi,\Psi \in \mathrm{CP}^{(P)}(\mcl{A,\B{\mcl{H}}})$ be such that  $\Psi = \mathrm{Ad}_S\circ\Phi$. Then there exists a unitary $U\in\B{\mcl{H}}$ such that $\Psi = \mathrm{Ad}_U\circ\Phi$.   
\end{lemma}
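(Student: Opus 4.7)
The plan is to use the block decomposition with respect to $\mcl{H} = \mcl{H}_0 \oplus \mcl{H}_0^\perp$, where $\mcl{H}_0 = \ran{P}$. Since $P$ is a projection, $\ran{P}$ is closed and Lemma \ref{lem-P-P0-Phi-Phi0} gives
\[
P = \Matrix{I_{\mcl{H}_0} & 0 \\ 0 & 0}, \qquad \Phi = \Matrix{\Phi_0 & 0 \\ 0 & 0}, \qquad \Psi = \Matrix{\Psi_0 & 0 \\ 0 & 0}
\]
with $\Phi_0, \Psi_0 \in \mathrm{UCP}(\mcl{A}, \B{\mcl{H}_0})$.

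Next, note that $\Psi(1) = P$ and $\Psi = \mathrm{Ad}_S \circ \Phi$ force $S^*PS = P$, so the hypothesis $\alpha P \leq S^*PS \leq \beta P$ of Lemma \ref{lem-P-S-decomp} holds with $\alpha = \beta = 1$. That lemma yields a block form $S = \sMatrix{S_1 & 0 \\ S_2 & S_3}$ with $S_1 \in \B{\mcl{H}_0}$ invertible. Computing $S^*PS$ in block form gives
\[
S^*PS = \Matrix{S_1^* S_1 & 0 \\ 0 & 0},
\]
so $S_1^* S_1 = I_{\mcl{H}_0}$. Since $S_1$ is invertible, it is a unitary on $\mcl{H}_0$. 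A direct block computation of $S^* \Phi(a) S$ (the lower-triangular structure of $S$ interacts cleanly with the upper-left block support of $\Phi$) then shows $\Psi_0 = S_1^* \Phi_0(\cdot) S_1 = \mathrm{Ad}_{S_1}\circ\Phi_0$.

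Finally, extend $S_1$ to a global unitary by setting $U := \sMatrix{S_1 & 0 \\ 0 & I_{\mcl{H}_0^\perp}} \in \B{\mcl{H}}$. Then $U$ is unitary and
\[
\mathrm{Ad}_U \circ \Phi = \Matrix{S_1^* \Phi_0 S_1 & 0 \\ 0 & 0} = \Matrix{\Psi_0 & 0 \\ 0 & 0} = \Psi,
\]
as required. The trivial cases $P = 0$ (where $\Phi = \Psi = 0$ and any unitary works) and $P = I_\mcl{H}$ (where $S^*S = I$ already makes $S$ itself unitary) are subsumed by the same argument.

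I do not anticipate a serious obstacle here: the two main ingredients, Lemmas \ref{lem-P-P0-Phi-Phi0} and \ref{lem-P-S-decomp}, do essentially all the work. The only point requiring any care is verifying that the off-diagonal blocks of $S$ are irrelevant to the conjugation of $\Phi$, which is automatic because $\Phi$ is supported on $\mcl{H}_0$ in the upper-left corner and $S$ is lower-triangular with respect to the same decomposition.
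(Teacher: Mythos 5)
Your proof is correct and follows essentially the same route as the paper: both reduce to the block decomposition of Lemma \ref{lem-P-P0-Phi-Phi0}, invoke Lemma \ref{lem-P-S-decomp} on the identity $S^*PS=P$ to get the lower-triangular form of $S$ with $S_1$ an invertible isometry (hence unitary), and take $U=\sMatrix{S_1&0\\0&I}$. The handling of the edge cases $P=0$ and $P=I$ also matches the paper's treatment.
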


\begin{proof}
   Assume that $P\neq 0$. If $P=I$, then $S$ will be an invertible isometry, and therefore a unitary. So assume that $P\neq I$. Then $\ker{P}\neq\{0\}$, and let $\mcl{H}_0=\clran{P}$. With respect to the decomposition $\mcl{H}=\mcl{H}_0\oplus\mcl{H}_0^\perp$, as in Lemma \ref{lem-P-P0-Phi-Phi0}, we write 
   \begin{align*}
       P= \Matrix{I&0\\0&0},\quad 
       \Phi = \Matrix{\Phi_0&0\\0&0},\quad
       \Psi = \Matrix{\Psi_0&0\\0&0},
   \end{align*}
   where $\Phi_0,\Psi_0 \in \mathrm{UCP}(\mcl{A},\B{\mcl{H}_0})$. Now, $\Psi = \mathrm{Ad}_S\circ\Phi$ implies that $P=S^*PS$. Then, by Lemma \ref{lem-P-S-decomp}, $S$ has the block matrix form $S=\sMatrix{S_1&0\\S_2&S_3}$ with $S_1\in\B{\mcl{H}_0}$ invertible. Also, from $P=S^*PS$, it follows that $S_1$ is an isometry and $\Psi_0 = \mathrm{Ad}_{S_{1}}\circ\Phi_0$. Then $U:=\sMatrix{S_{1}&0\\0&I}$ is a unitary such that $\Psi = \mathrm{Ad}_U\circ\Phi$.
\end{proof}

Now we are ready to prove the main theorem of this section. 

\begin{theorem}\label{thm-CCP-C-star-ext} 
    If $\dim(\mcl{H}) < \infty$, then
    \begin{align*}
       \mathrm{CCP}_{C^*-ext}(\mcl{A,\B{\mcl{H}}})= \bigcup_{P=P^2=P^*}\mathrm{CP}^{(P)}_{C^*-ext}(\mcl{A,\B{\mcl{H}}}).
    \end{align*}
\end{theorem}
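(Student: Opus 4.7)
The strategy is to establish both inclusions separately, with finite-dimensionality entering in two key places: it ensures $\ran\Phi(1)$ is automatically closed so Lemma \ref{lem-CCP-proj} applies, and it enables the use of Proposition \ref{prop-Phi-Phi0}(ii). The main theme in both directions is translating between a $C^*$-convex decomposition in $\mathrm{CCP}$ (normalized by $\sum_j T_j^*T_j=I$) and a proper $P$-$C^*$-convex decomposition in $\mathrm{CP}^{(P)}$ (normalized by $\sum_j T_j^*PT_j=P$).

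For the inclusion $\bigcup_P\mathrm{CP}^{(P)}_{C^*-ext}(\mcl{A},\B{\mcl{H}})\subseteq\mathrm{CCP}_{C^*-ext}(\mcl{A},\B{\mcl{H}})$, fix a projection $P$ and a $P$-$C^*$-extreme $\Phi\in\mathrm{CP}^{(P)}(\mcl{A},\B{\mcl{H}})$, and take a proper $C^*$-convex CCP-decomposition $\Phi=\sum_j\mathrm{Ad}_{T_j}\circ\Phi_j$. The plan is to compare $\sum_j T_j^*\Phi_j(1)T_j=P$ against $\sum_j T_j^*T_j=I$ by testing with vectors in $\ran P$ and in $\ker P$; this forces $\Phi_j(1)T_j=T_jP$, hence $\Phi_j(1)=T_jPT_j^{-1}$, and self-adjointness then makes $T_j^*T_j$ commute with $P$. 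Writing the polar decomposition $T_j=U_j|T_j|$ (a genuine unitary appears because $T_j$ is invertible in finite dimension), one gets $\Phi_j(1)=U_jPU_j^*$ and $|T_j|$ commutes with $P$. Setting $\widetilde\Phi_j:=\mathrm{Ad}_{U_j}\circ\Phi_j\in\mathrm{CP}^{(P)}(\mcl{A},\B{\mcl{H}})$, I then rewrite $\Phi=\sum_j\mathrm{Ad}_{|T_j|}\circ\widetilde\Phi_j$ and verify it is a proper $P$-$C^*$-convex decomposition. By hypothesis there are invertible $S_j$ with $\widetilde\Phi_j=\mathrm{Ad}_{S_j}\circ\Phi$, upgraded to unitaries $W_j$ by Lemma \ref{lem-proj-unitary}, so that $\Phi_j=\mathrm{Ad}_{W_jU_j^*}\circ\Phi$ with $W_jU_j^*$ unitary.

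For the reverse inclusion, take $\Phi\in\mathrm{CCP}_{C^*-ext}(\mcl{A},\B{\mcl{H}})$; Lemma \ref{lem-CCP-proj} forces $P:=\Phi(1)$ to be a projection. By Proposition \ref{prop-Phi-Phi0}(ii) it suffices to show the compression $\Phi_0:\mcl{A}\to\B{\mcl{H}_0}$ (with $\mcl{H}_0=\ran P$) from Lemma \ref{lem-P-P0-Phi-Phi0} lies in $\mathrm{UCP}_{C^*-ext}(\mcl{A},\B{\mcl{H}_0})$. Given a proper $C^*$-convex UCP-decomposition $\Phi_0=\sum_{j=1}^n\mathrm{Ad}_{A_j}\circ\Psi_j$, I would lift it to $\mathrm{CCP}(\mcl{A},\B{\mcl{H}})$ by the block-diagonal choice
\begin{align*}
  \widetilde\Phi_j:=\Matrix{\Psi_j&0\\0&0},\qquad T_j:=\Matrix{A_j&0\\0&\tfrac{1}{\sqrt{n}}I_{\mcl{H}_0^\perp}},
\end{align*}
so that $\sum_j T_j^*T_j=I$ and $\Phi=\sum_j\mathrm{Ad}_{T_j}\circ\widetilde\Phi_j$ is a proper $C^*$-convex CCP-decomposition. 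The CCP-extremality of $\Phi$ then produces unitaries $V_j\in\B{\mcl{H}}$ with $\widetilde\Phi_j=\mathrm{Ad}_{V_j}\circ\Phi$; evaluating at $1$ gives $V_j^*PV_j=P$, which forces $V_j$ to commute with the projection $P$ and split as a block-diagonal unitary $V_j=\sMatrix{V_{j,11}&0\\0&V_{j,22}}$ with $V_{j,11}$ unitary on $\mcl{H}_0$. Reading off the $(1,1)$-block finally gives $\Psi_j=\mathrm{Ad}_{V_{j,11}}\circ\Phi_0$, establishing $\Phi_0\in\mathrm{UCP}_{C^*-ext}(\mcl{A},\B{\mcl{H}_0})$.

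The harder direction is $\supseteq$, since an arbitrary $C^*$-convex CCP-decomposition of $\Phi$ has no a priori reason to remain inside $\mathrm{CP}^{(P)}$; the positivity argument producing $\Phi_j(1)=T_jPT_j^{-1}$, combined with the polar decomposition used to separate a unitary ``twist'' $U_j$ from a positive $P$-commuting factor $|T_j|$, is the technical core of the proof, and this is precisely where finite-dimensionality is essential (both for the unitary part of the polar decomposition, and to invoke Lemma \ref{lem-CCP-proj} and Proposition \ref{prop-Phi-Phi0}(ii)).
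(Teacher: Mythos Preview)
Your proof is correct, and for the inclusion $\subseteq$ it is essentially identical to the paper's: apply Lemma \ref{lem-CCP-proj} to make $P=\Phi(1)$ a projection, lift a proper $C^*$-convex decomposition of $\Phi_0$ block-diagonally, and read off the $(1,1)$-block of the resulting unitary equivalence (the paper phrases the unitary equivalence as $\Phi=\mathrm{Ad}_{U_j}\circ\widetilde{\Psi}_j$ rather than $\widetilde{\Phi}_j=\mathrm{Ad}_{V_j}\circ\Phi$, but this is immaterial).

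For the inclusion $\supseteq$ your argument is genuinely different and more self-contained. The paper, having written $P=\sum_jT_j^*\Phi_j(1)T_j$ with $\sum_jT_j^*T_j=I$ and $0\leq\Phi_j(1)\leq I$, invokes the external fact (\cite[Proposition 26]{LoPa81}, \cite{Wei02}) that projections are $C^*$-extreme among positive contractions to obtain unitaries $U_j$ with $\Phi_j(1)=U_j^*PU_j$, and then takes $U_jT_j$ as the new coefficients. You instead derive $\Phi_j(1)T_j=T_jP$ directly by testing the two positive identities $\sum_jT_j^*(I-\Phi_j(1))T_j=I-P$ and $\sum_jT_j^*\Phi_j(1)T_j=P$ against vectors in $\ran{P}$ and $\ker{P}$; the polar decomposition $T_j=U_j|T_j|$ then separates a unitary twist from a $P$-commuting positive factor, and $|T_j|$ serves as the new coefficient. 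This avoids the external citation entirely and makes transparent \emph{why} $\Phi_j(1)$ must be unitarily conjugate to $P$. One small correction to your commentary: the polar part of an invertible operator is unitary on \emph{any} Hilbert space, not just in finite dimension, so your argument for $\supseteq$ in fact works without the hypothesis $\dim(\mcl{H})<\infty$ --- consistent with the paper's own remark (see the Note following the theorem and \eqref{eq-C-star-ext-general-inclusion}) that this inclusion holds in general. Finite-dimensionality is genuinely needed only for $\subseteq$, via Lemma \ref{lem-CCP-proj} and Proposition \ref{prop-Phi-Phi0}(ii).
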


\begin{proof}
Let $P \in \B{\mcl{H}}$ be a projection and $\Phi \in \mathrm{CP}^{(P)}_{C^*-ext}(\mcl{A,\B{\mcl{H}}})$. Suppose $\Phi=\mathrm{Ad}_{T_1}\circ\Phi_1+\mathrm{Ad}_{T_2}\circ\Phi_2$ be a proper $C^*$-convex decomposition of $\Phi$ with $\Phi_j \in \mathrm{CCP}(\mcl{A,\B{\mcl{H}}})$ and $T_j\in\B{\mcl{H}}$. Then, 
    \begin{align*}
        P=\Phi(1)= {T_1}^*\Phi_1(1)T_1 + {T_2}^*\Phi_2(1)T_2,
    \end{align*}
    where $0\leq\Phi_j(1)\leq I$, and hence by \cite[Proposition 26]{LoPa81} and \cite{Wei02}, there exists a unitary $U_j\in\B{\mcl{H}}$ such that $\Phi_j(1)={U_j}^*PU_j$ for $j=1,2$. Thus 
    \begin{align*}
        P=(U_1T_1)^*PU_1T_1 +(U_2T_2)^*PU_2T_2.
    \end{align*}
    Let $\wtilde{\Phi}_j:= \mathrm{Ad}_{U_j^*}\circ\Phi_j\in \mathrm{CP}^{(P)}(\mcl{A,\B{\mcl{H}}})$ for $j=1,2$. Then
    \begin{align*}
        \Phi= \mathrm{Ad}_{T_1}\circ\Phi_1+\mathrm{Ad}_{T_2}\circ\Phi_2 
            = \mathrm{Ad}_{U_1T_1}\circ\wtilde{\Phi}_1+ \mathrm{Ad}_{U_2T_2}\circ\wtilde{\Phi}_2
    \end{align*}
    is a proper $P$-$C^*$-convex combination of $\wtilde{\Phi}_j$'s. Since $\Phi \in \mathrm{CP}^{(P)}_{C^*-ext}(\mcl{A,\B{\mcl{H}}})$ there exists invertible $S_j\in \B{\mcl{H}}$ such that $\wtilde{\Phi}_j = \mathrm{Ad}_{S_j}\circ\Phi$ for $j=1,2$. Now, by Lemma \ref{lem-proj-unitary}, we can choose $S_j$ to be unitary. Therefore, $\Phi_j = \mathrm{Ad}_{V_j}\circ\Phi$, where $V_j:=S_jU_j$ is unitary for $j=1,2$. Therefore, $\Phi \in \mathrm{CCP}_{C^*-ext}(\mcl{A,\B{\mcl{H}}})$. 
    
    Conversely, let $\Phi\in \mathrm{CCP}_{C^*-ext}(\mcl{A},\B{\mcl{H}})$. Then, by Lemma \ref{lem-CCP-proj}, $P:=\Phi(1)$ is a projection. Assume that $P\neq 0$. If $P=I$, then from Proposition \ref{prop-UCP-C-Cstar}  we have $\Phi\in \mathrm{UCP}_{C^*-ext}(\mcl{A},\B{\mcl{H}})=\mathrm{CP^{(I)}}_{C^*-ext}(\mcl{A},\B{\mcl{H}})$. So assume $P\neq I$. Let $\mcl{H}_0, P_0,\Phi_0$ be as in Lemma \ref{lem-P-P0-Phi-Phi0}. Since $P$ is a projection, we have $P_0 = I\in\B{\mcl{H}_0}$ and $\Phi_0 \in \mathrm{UCP}(\mcl{A},\B{\mcl{H}_0})$. Now, we show that  $\Phi_0\in\mathrm{UCP}_{C^*-ext}(\mcl{A},\B{\mcl{H}_0})$ so that, by Proposition \ref{prop-Phi-Phi0}, $\Phi\in\mathrm{CP^{(P)}}_{C^*-ext}(\mcl{A},\B{\mcl{H}})$. So let $\Phi_0=\sum_{j=1}^n\mathrm{Ad}_{T_j}\circ\Psi_j$ be a proper $C^*$-convex decomposition of $\Phi_0$, where $\Psi_j\in\mathrm{UCP}(\mcl{A},\B{\mcl{H}_0})$ and $T_j\in\B{\mcl{H}_0}$ invertible such that $\sum_{j=1}^nT_j^*T_j=I$. Then $\widetilde{\Psi}_j:=\sMatrix{\Psi_j&0\\0&0}\in\mathrm{CCP}(\mcl{A},\B{\mcl{H}})$ and $\widetilde{T}_j:=\sMatrix{T_j&0\\0&\frac{1}{\sqrt{n}}I}\in\B{\mcl{H}}$ invertible are such that $\Phi=\sum_{j=1}^n\mathrm{Ad}_{\widetilde{T}_j}\circ\widetilde{\Psi}_j$ is a proper $C^*$-convex decomposition of $\Phi$. Hence there exists unitary $U_j=\sMatrix{X_j&Y_j\\Z_j&W_j}\in\B{\mcl{H}_0\oplus\mcl{H}_0^\perp}$ such that 
    \begin{align*}
        \Matrix{\Phi_0(\cdot)&0\\0&0}
                =\Phi(\cdot)=\mathrm{Ad}_{U_j}\circ\widetilde{\Psi}_j(\cdot)
                =\Matrix{X_j^*\Psi_j(\cdot)X_j&X_j^*\Psi_j(\cdot)Y_j\\ Y_j^*\Psi_j(\cdot)X_j&Y_j^*\Psi_j(\cdot)Y_j}
                ,\qquad\forall~1\leq j\leq n.
    \end{align*}
    Since $\Phi_0,\Psi_j$ are unital, from the above equation, we get $X_j^*X_j=I$ and  $Y_j^*Y_j=0$, i.e., $Y_j=0$ for all $1\leq j\leq n$. Then $U_j$ unitary implies $X_j$ is unitary, and $\Phi_0=\mathrm{Ad}_{X_j}\circ\Psi_j$ for $1\leq j\leq n$. Hence $\Phi_0\in{\mathrm{UCP}_{C^*-ext}(\mcl{A},\B{\mcl{H}_0})}$. This completes the proof.
\end{proof}

\begin{remark}\label{rmk-CCP-Cstar-structure}
    Suppose $\dim(\mcl{H})<\infty$. Then, from Proposition \ref{prop-Phi-Phi0} and Theorem \ref{thm-CCP-C-star-ext}, we have the following: A contractive CP map $\Phi$ is a $C^*$-extreme point of  $\mathrm{CCP}(\mcl{A,\B{\mcl{H}}})$ if and only if there exists a closed subspace $\mcl{H}_0\subseteq\mcl{H}$ and $\Psi\in\mathrm{UCP}_{C^*-ext}(\mcl{A},\B{\mcl{H}_0})$ such that
    \begin{align}\label{eq-CCP-Cstar-structure}
     \Phi=\Matrix{\Psi&0\\0&0},
    \end{align} 
    with respect to the decomposition $\mcl{H}=\mcl{H}_0\oplus\mcl{H}_0^\perp$. Also, by Theorem \ref{thm-CCP-C-star-ext}  and Lemma \ref{lem-proj-unitary}, $\Phi$ must be of the form \eqref{eq-CP-P-Cstar-structure} with $S$ unitary, where $P:=\Phi(1)$ is a projection. In particular, if $\mcl{A}$ is a commutative unital $C^*$-algebra, then by Proposition \ref{prop-FaMo} we conclude that $\Phi\in\mathrm{CCP}_{C^*-ext}(\mcl{A},\B{\mcl{H}})$ if and only if $\Phi$ is a $\ast$-homomorphism. 
\end{remark}

\begin{note}
 We observe that by using Lemma \ref{lem-CCP-proj}, Corollary \ref{cor-invertible-conjugate-positive-inequality} and \cite[Proposition 26]{LoPa81}, one can derive the structure \ref{eq-CCP-Cstar-structure} without invoking $P$-$C^*$-extreme points. However, for infinite-dimensional Hilbert spaces, we are unsure whether this can be accomplished. The proof of the above theorem demonstrates that the inclusion
  \begin{align}\label{eq-C-star-ext-general-inclusion}
        \bigcup_{P=P^2=P^*}\mathrm{CP}^{(P)}_{C^*-ext}(\mcl{A,\B{\mcl{H}}})\subseteq\mathrm{CCP}_{C^*-ext}(\mcl{A,\B{\mcl{H}}})
    \end{align}
 holds for any Hilbert space $\mcl{H}$. 
 If we can prove that $\Phi(1)$ is a projection for any $\Phi\in\mathrm{CCP}_{C^*-ext}(\mcl{A,\B{\mcl{H}}})$, then it will follow that each $\Phi\in \mathrm{CCP}_{C^*-ext}(\mcl{A,\B{\mcl{H}}})$ must be of the form $\Phi=\sMatrix{\Psi&0\\0&0}$ for some $\Psi\in \mathrm{UCP}_{C^*-ext}(\mcl{A},\B{\mcl{H}_0})$ and closed subspace $\mcl{H}_0\subseteq\mcl{H}$. 
 Additionally, if Proposition \ref{prop-Phi-Phi0} (ii) holds for any $\mcl{H}$ and $P=\Phi(1)$ projection, then one can  arrive at the structure \eqref{eq-CCP-Cstar-structure}. 
\end{note}
 

Next we prove a Krein-Milman type theorem for the $C^*$-convex set of CCP-maps.

\begin{lemma}\label{lem-conhull-proj}
    Let $0\neq P\in\B{\mcl{H}}$ be a projection. Then  
    \begin{align*}
    P\mbox{-}C^*{\mbox{-}}con(\mcl{S}) \subseteq C^*{\mbox{-}}con(\mcl{S}),
    \end{align*}
   for any subset $\mcl{S}\subseteq\mathrm{CP}^{(P)}(\mcl{A},\B{\mcl{H}})$.
\end{lemma}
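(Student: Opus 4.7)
The plan is to take an arbitrary $P$-$C^*$-convex combination $\Phi=\sum_{j=1}^n\mathrm{Ad}_{T_j}\circ\Phi_j$ with $\Phi_j\in\mcl{S}\subseteq\mathrm{CP}^{(P)}(\mcl{A},\B{\mcl{H}})$ and $\sum_{j=1}^nT_j^*PT_j=P$, and produce operators $S_j\in\B{\mcl{H}}$ satisfying $\sum_{j=1}^nS_j^*S_j=I$ together with $\mathrm{Ad}_{S_j}\circ\Phi_j=\mathrm{Ad}_{T_j}\circ\Phi_j$ for every $j$. This will immediately exhibit $\Phi$ as a $C^*$-convex combination of the same $\Phi_j$'s.

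The key observation is that since each $\Phi_j(1)=P$ is a projection, Lemma \ref{lem-P-P0-Phi-Phi0} (applied with respect to the decomposition $\mcl{H}=\clran{P}\oplus\ker{P}=P\mcl{H}\oplus(I-P)\mcl{H}$) forces the block-matrix identity $\Phi_j(a)=P\Phi_j(a)P$ for every $a\in\mcl{A}$. Equivalently, $\Phi_j(a)(I-P)=0=(I-P)\Phi_j(a)$. Thus, if we modify any $T_j$ by replacing its $(I-P)$-component with something else, the map $\mathrm{Ad}_{T_j}\circ\Phi_j$ is unaffected.

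Exploiting this, I would set
\[
   S_j:=PT_j+\tfrac{1}{\sqrt{n}}(I-P),\qquad 1\leq j\leq n.
\]
Using $P(I-P)=0$, a direct expansion gives $S_j^*S_j=T_j^*PT_j+\tfrac{1}{n}(I-P)$, and summing over $j$ yields
\[
   \sum_{j=1}^nS_j^*S_j \;=\;\sum_{j=1}^nT_j^*PT_j+(I-P)\;=\;P+(I-P)\;=\;I.
\]
Moreover, for each $a\in\mcl{A}$, the identity $\Phi_j(a)(I-P)=0$ gives $\Phi_j(a)S_j=\Phi_j(a)PT_j=\Phi_j(a)T_j$, and similarly $S_j^*\Phi_j(a)=T_j^*\Phi_j(a)$, so $\mathrm{Ad}_{S_j}\circ\Phi_j=\mathrm{Ad}_{T_j}\circ\Phi_j$. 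Summing gives $\Phi=\sum_{j=1}^n\mathrm{Ad}_{S_j}\circ\Phi_j\in C^*\mbox{-}con(\mcl{S})$, completing the proof.

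There is no serious obstacle; the proof hinges entirely on the fact that $P$ being a projection (rather than an arbitrary positive operator) makes $(I-P)$ a genuine complementary projection, which is exactly what is needed to complete $\sum T_j^*PT_j=P$ to a sum summing to $I$ without disturbing the values $\mathrm{Ad}_{T_j}\circ\Phi_j$. The only mild subtlety is noticing that one may freely modify the $T_j$'s on $\ker{P}$, and this follows from the block-diagonal structure of maps in $\mathrm{CP}^{(P)}(\mcl{A},\B{\mcl{H}})$ already established in Lemma \ref{lem-P-P0-Phi-Phi0}.
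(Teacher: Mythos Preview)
Your proof is correct and follows essentially the same approach as the paper: both replace the $(I-P)$-part of each $T_j$ by $\tfrac{1}{\sqrt{n}}(I-P)$, the paper doing this via the block-matrix choice $S_j=\sMatrix{X_j&0\\0&\frac{1}{\sqrt{n}}I}$ while you write it coordinate-free as $S_j=PT_j+\tfrac{1}{\sqrt{n}}(I-P)$. Your version has the small bonus of establishing the termwise equality $\mathrm{Ad}_{S_j}\circ\Phi_j=\mathrm{Ad}_{T_j}\circ\Phi_j$ directly, whereas the paper only matches the sums (though the two choices of $S_j$ in fact coincide, since the constraint $\sum_jT_j^*PT_j=P$ forces $PT_j(I-P)=0$ for each $j$).
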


\begin{proof}
    If $P=I$, nothing to prove. So assume $P\neq I$ and let $\Phi\in P\mbox{-}C^*{\mbox{-}}con(\mcl{S})$. Assume $\Phi = \sum_{j=1}^n \mathrm{Ad}_{T_j}\circ\Phi_j$, where $\Phi_j \in \mcl{S}$ and $T_j \in \B{\mcl{H}}$ is such that $\sum_{j=1}^n {T_j}^*PT_j = P$. Let $\mcl{H}_0=\ran{P}$. With respect to the decomposition $\mcl{H}=\mcl{H}_0\oplus\mcl{H}_0^\perp$ write 
    \begin{align*}
        P=\Matrix{I&0\\0&0},\quad
        T_j = \Matrix{X_j&Y_j\\Z_j&W_j},\quad
        \Phi_j=\Matrix{\Psi_j&0\\0&0},\quad
        \Phi=\Matrix{\Phi_0&0\\0&0},
    \end{align*}
    where $\Phi_0,\Psi_j \in \mathrm{UCP}(\mcl{A},\B{\mcl{H}_0})$ are as in \eqref{eq-P-P0-Phi-Phi0}. Set $S_j = \sMatrix{X_j&0\\0&\frac{1}{\sqrt{n}}I}$ for all $1\leq j\leq n$. Then $\Phi=\sum_{j=1}^n \mathrm{Ad}_{S_j}\circ\Phi_j \in C^*{\mbox{-}}con(\mcl{S})$. 
\end{proof}

\begin{theorem}\label{thm-KMT-CCP}
    Suppose $\dim(\mcl{H}) < \infty$. Then,
    \begin{align*}
       \mathrm{CCP}(\mcl{A},\B{\mcl{H}})=
       \ol{C^*{\mbox{-}}con}\Big(\mathrm{CCP}_{C^*-ext}(\mcl{A},\B{\mcl{H}})\Big),
   \end{align*}
   where the closure is taken with respect to the BW-topology on $\mathrm{CP}(\mcl{A},\B{\mcl{H}})$. 
\end{theorem}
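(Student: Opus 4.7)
The plan is to reduce, via Corollary \ref{cor-KM-CP-P-fd}, to a structural rewriting of $P$-$C^*$-extreme CP maps as conjugates (by $P^{1/2}$) of $C^*$-extreme CCP maps. The easy inclusion $\ol{C^*{\mbox{-}}con}(\mathrm{CCP}_{C^*-ext}(\mcl{A},\B{\mcl{H}})) \subseteq \mathrm{CCP}(\mcl{A},\B{\mcl{H}})$ is immediate since $\mathrm{CCP}(\mcl{A},\B{\mcl{H}})$ is BW-compact and $C^*$-convex. For the reverse inclusion, fix $\Phi \in \mathrm{CCP}(\mcl{A},\B{\mcl{H}})$, set $P := \Phi(1)$, and apply Corollary \ref{cor-KM-CP-P-fd} to obtain a net $\{\Phi_\alpha\}$ in $P\mbox{-}C^*{\mbox{-}}con(\mathrm{CP}^{(P)}_{C^*-ext}(\mcl{A},\B{\mcl{H}}))$ converging to $\Phi$ in BW-topology; each $\Phi_\alpha$ has the form $\sum_{j=1}^{n_\alpha} \mathrm{Ad}_{T_j^\alpha} \circ \Phi_j^\alpha$ with $\Phi_j^\alpha \in \mathrm{CP}^{(P)}_{C^*-ext}$ and $\sum_j (T_j^\alpha)^* P T_j^\alpha = P$.

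The crucial step is to factor each $\Phi_j^\alpha$ as $\mathrm{Ad}_{P^{1/2}} \circ \widehat{\Phi_j^\alpha}$ for some $\widehat{\Phi_j^\alpha} \in \mathrm{CCP}_{C^*-ext}(\mcl{A},\B{\mcl{H}})$. Let $\mcl{H}_0 := \ran{P}$; since $\dim \mcl{H}<\infty$, the compression $P_0 \in \B{\mcl{H}_0}$ of $P$ is invertible. Proposition \ref{prop-Phi-Phi0} block-diagonalizes $\Phi_j^\alpha = \SMatrix{\Phi_{j,0}^\alpha & 0 \\ 0 & 0}$ with $\Phi_{j,0}^\alpha \in \mathrm{CP}^{(P_0)}_{C^*-ext}(\mcl{A},\B{\mcl{H}_0})$, and Theorem \ref{thm-P-Cext-char} then supplies $\widehat{\Phi_{j,0}^\alpha} \in \mathrm{UCP}_{C^*-ext}(\mcl{A},\B{\mcl{H}_0})$ with $\Phi_{j,0}^\alpha = \mathrm{Ad}_{P_0^{1/2}} \circ \widehat{\Phi_{j,0}^\alpha}$. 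Setting $\widehat{\Phi_j^\alpha} := \SMatrix{\widehat{\Phi_{j,0}^\alpha} & 0 \\ 0 & 0}$, a direct block-matrix computation yields $\mathrm{Ad}_{P^{1/2}} \circ \widehat{\Phi_j^\alpha} = \Phi_j^\alpha$, Proposition \ref{prop-Phi-Phi0}(ii) (where the finite-dimensional hypothesis enters) places $\widehat{\Phi_j^\alpha}$ in $\mathrm{CP}^{(Q)}_{C^*-ext}(\mcl{A},\B{\mcl{H}})$ for $Q$ the orthogonal projection onto $\mcl{H}_0$, and Theorem \ref{thm-CCP-C-star-ext} then lodges $\widehat{\Phi_j^\alpha}$ inside $\mathrm{CCP}_{C^*-ext}(\mcl{A},\B{\mcl{H}})$.

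Substituting this factorization, $\Phi_\alpha = \sum_j \mathrm{Ad}_{P^{1/2} T_j^\alpha} \circ \widehat{\Phi_j^\alpha}$, with coefficients satisfying $\sum_j (P^{1/2} T_j^\alpha)^* (P^{1/2} T_j^\alpha) = \sum_j (T_j^\alpha)^* P T_j^\alpha = P \leq I$. Observing that $0 \in \mathrm{CP}^{(0)}_{C^*-ext}(\mcl{A},\B{\mcl{H}}) \subseteq \mathrm{CCP}_{C^*-ext}(\mcl{A},\B{\mcl{H}})$ by Theorem \ref{thm-CCP-C-star-ext}, one can absorb the slack by appending $\mathrm{Ad}_{(I-P)^{1/2}} \circ 0 = 0$, to obtain
\[
\Phi_\alpha \;=\; \Big(\sum_{j=1}^{n_\alpha} \mathrm{Ad}_{P^{1/2} T_j^\alpha} \circ \widehat{\Phi_j^\alpha}\Big) + \mathrm{Ad}_{(I-P)^{1/2}} \circ 0,
\]
a genuine $C^*$-convex combination of elements of $\mathrm{CCP}_{C^*-ext}(\mcl{A},\B{\mcl{H}})$, since $\sum_j (P^{1/2} T_j^\alpha)^*(P^{1/2} T_j^\alpha) + (I-P) = I$. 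Hence each $\Phi_\alpha$ lies in $C^*{\mbox{-}}con(\mathrm{CCP}_{C^*-ext}(\mcl{A},\B{\mcl{H}}))$, and BW-convergence $\Phi_\alpha \to \Phi$ finishes the proof. The main obstacle is precisely the factorization step when $P$ is not invertible: handling the non-invertible part of $P$ forces the block-diagonal reduction via Proposition \ref{prop-Phi-Phi0}(ii) and a subsequent appeal to Theorem \ref{thm-CCP-C-star-ext}, which converts $\mathrm{CP}^{(Q)}_{C^*-ext}$ for the projection $Q$ into the desired $\mathrm{CCP}_{C^*-ext}$; once this is in place the rest is bookkeeping.
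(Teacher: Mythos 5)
Your proof is correct, and it reaches the result by reorganizing the same toolkit the paper uses rather than by a new idea. The paper proceeds ``decompose first, then approximate'': it writes $\Phi$ itself as an explicit $C^*$-convex combination of two maps whose values at $1$ are projections (namely $\sMatrix{\widehat{\Phi}_0&0\\0&0}$ and the zero map, with coefficients $\sMatrix{P_0^{1/2}&0\\0&\frac{1}{\sqrt{2}}I}$ and $\sMatrix{(I-P_0)^{1/2}&0\\0&\frac{1}{\sqrt{2}}I}$), and only then applies Corollary \ref{cor-KM-CP-P-fd} to each piece, using Lemma \ref{lem-conhull-proj} to pass from $P$-$C^*$-convex hulls to $C^*$-convex hulls when $P$ is a projection. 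You proceed ``approximate first, then convert'': you apply Corollary \ref{cor-KM-CP-P-fd} directly at $P=\Phi(1)$, which need not be a projection, and then turn each approximant from a $P$-$C^*$-convex combination into a genuine $C^*$-convex combination by factoring each $P$-$C^*$-extreme summand as $\mathrm{Ad}_{P^{1/2}}$ of a $Q$-$C^*$-extreme map ($Q$ the range projection of $P$), absorbing $P^{1/2}$ into the coefficients, and padding with $\mathrm{Ad}_{(I-P)^{1/2}}\circ 0$; this bypasses Lemma \ref{lem-conhull-proj} but in effect re-proves the same conversion inside the net, resting on the identical finite-dimensional facts (invertibility of $P_0$, Proposition \ref{prop-Phi-Phi0}(ii), and Theorem \ref{thm-CCP-C-star-ext}). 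Your factorization $\mathrm{Ad}_{T_j^\alpha}\circ\mathrm{Ad}_{P^{1/2}}=\mathrm{Ad}_{P^{1/2}T_j^\alpha}$ and the coefficient bookkeeping $\sum_j(T_j^\alpha)^*PT_j^\alpha+(I-P)=I$ are both right. The only cosmetic gaps are the degenerate cases: when $P$ is invertible, Proposition \ref{prop-Phi-Phi0} does not literally apply (it assumes $\ker{P}\neq\{0\}$) but Theorem \ref{thm-P-Cext-char} gives $\widehat{\Phi_j^\alpha}\in\mathrm{UCP}_{C^*-ext}(\mcl{A},\B{\mcl{H}})$ directly; and when $P=0$ the statement is trivial. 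Neither affects the argument.
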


\begin{proof}
    First, we observe that if $P\in\B{\mcl{H}}$ is a non-zero projection, then by Corollary \ref{cor-KM-CP-P-fd} and Lemma \ref{lem-conhull-proj}, we get 
    \begin{align}\label{eq-KMT-inclu}
      \mathrm{CP}^{(P)}(\mcl{A},\B{\mcl{H}}) 
      &= \ol{P\mbox{-}C^*{\mbox{-}}con}\Big(\mathrm{CP}^{(P)}_{C^*-ext}(\mcl{A},\B{\mcl{H}})\Big) \notag\\
      &\subseteq \ol{C^*{\mbox{-}}con}\Big(\mathrm{CP}^{(P)}_{C^*-ext}(\mcl{A},\B{\mcl{H}})\Big) \notag\\
      &\subseteq \ol{C^*{\mbox{-}}con}\Big(\mathrm{CCP}_{C^*-ext}(\mcl{A},\B{\mcl{H}})\Big).
     \end{align}
    The above inclusions hold trivially if $P=0$. Now, let $0\neq \Phi \in \mathrm{CCP}(\mcl{A},\B{\mcl{H}})$ and $P=\Phi(1)$. If $\ker{P}\neq\{0\}$, then take $\mcl{H}_0,\Phi_0,P_0$ as in Lemma \ref{lem-P-P0-Phi-Phi0} and set 
    \begin{align*}
        T_1 = \Matrix{{P_0}^{\frac{1}{2}}&0\\0&\frac{1}{\sqrt{2}}I},\quad
        T_2=\Matrix{(I-P_0)^{\frac{1}{2}}&0\\0&\frac{1}{\sqrt{2}}I},\quad
        \Phi_1 = \Matrix{ \widehat{\Phi}_0 &0\\0&0},\quad
        \Phi_2 = \Matrix{0&0\\0&0}.
    \end{align*}
    Since $\Phi_j(1)$'s are projections, from \eqref{eq-KMT-inclu}, we conclude that 
    \begin{align*} 
        \Phi = \sum_{j=1}^2\mathrm{Ad}_{T_j}\circ\Phi_j\in
        \ol{C^*{\mbox{-}}con}\Big(\mathrm{CCP}_{C^*-ext}(\mcl{A},\B{\mcl{H}})\Big).
    \end{align*}
    Similarly, if $\ker{P}=\{0\}$, then $P$ is invertible so that, again from \eqref{eq-KMT-inclu}, we have
    \begin{align*}
       \Phi=\mathrm{Ad}_{P^{\frac{1}{2}}}\circ\widehat{\Phi}+\mathrm{Ad}_{(I-P)^{\frac{1}{2}}}\circ 0 \in
        \ol{C^*{\mbox{-}}con}\Big(\mathrm{CCP}_{C^*-ext}(\mcl{A},\B{\mcl{H}})\Big).
    \end{align*}
    This completes the proof.
\end{proof}

In the remainder of this section, we examine the linear extreme points of CCP maps and their relationship with $C^*$-extreme points.

\begin{proposition}\label{prop-CCP-ext-bounds}
    \begin{align}\label{eq-lin-extremes-bounds}
        \bigcup_{P=P^2=P^*}\mathrm{CP}^{(P)}_{ext}(\mcl{A},\B{\mcl{H}})  \subseteq\mathrm{CCP}_{ext}(\mcl{A},\B{\mcl{H}}) \subseteq \bigcup_{0\leq P \leq I, \norm{P}\in\{0,1\}}\mathrm{CP}^{(P)}_{ext}(\mcl{A},\B{\mcl{H}}).
    \end{align}
\end{proposition}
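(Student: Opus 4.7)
The proof splits into the two containments, both elementary once one identifies the correct ambient convex set at each step.

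For the first containment, let $P\in\B{\mcl{H}}$ be a projection and $\Phi\in\mathrm{CP}^{(P)}_{ext}(\mcl{A},\B{\mcl{H}})$. Since $P\leq I$ we automatically have $\Phi\in\mathrm{CCP}(\mcl{A},\B{\mcl{H}})$. Suppose $\Phi=t\Phi_1+(1-t)\Phi_2$ is a proper convex decomposition with $\Phi_j\in\mathrm{CCP}(\mcl{A},\B{\mcl{H}})$ and $t\in(0,1)$. Evaluating at the unit of $\mcl{A}$ gives $P=t\Phi_1(1)+(1-t)\Phi_2(1)$ with $\Phi_j(1)\in[0,I]$. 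My plan is to invoke the classical fact that projections are precisely the extreme points of the operator interval $\{X\in\B{\mcl{H}}:0\leq X\leq I\}$ (a quick proof splits into $x\in\ker P$ and $x\in\ran P$ and uses $0\leq A,B\leq I$ to pin down $Ax=Bx=Px$); this forces $\Phi_1(1)=\Phi_2(1)=P$. Hence $\Phi_j\in\mathrm{CP}^{(P)}(\mcl{A},\B{\mcl{H}})$, and the linear extremality of $\Phi$ inside $\mathrm{CP}^{(P)}(\mcl{A},\B{\mcl{H}})$ delivers $\Phi_j=\Phi$.

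For the second containment, let $\Phi\in\mathrm{CCP}_{ext}(\mcl{A},\B{\mcl{H}})$ and set $P:=\Phi(1)$, so $0\leq P\leq I$ and $\Phi\in\mathrm{CP}^{(P)}(\mcl{A},\B{\mcl{H}})$. Since $\mathrm{CP}^{(P)}(\mcl{A},\B{\mcl{H}})\subseteq\mathrm{CCP}(\mcl{A},\B{\mcl{H}})$, any proper convex decomposition of $\Phi$ inside $\mathrm{CP}^{(P)}(\mcl{A},\B{\mcl{H}})$ is already a decomposition inside $\mathrm{CCP}(\mcl{A},\B{\mcl{H}})$, so extremality transfers downward and $\Phi\in\mathrm{CP}^{(P)}_{ext}(\mcl{A},\B{\mcl{H}})$ for free. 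What remains is the norm constraint $\|P\|\in\{0,1\}$. If $\Phi=0$ then $\|P\|=0$ and we are done; otherwise assume for contradiction that $0<\|P\|<1$. I would then pick $\epsilon>0$ small enough that $(1+\epsilon)\|P\|\leq 1$ and write
\[
\Phi=\tfrac{1}{2}(1+\epsilon)\Phi+\tfrac{1}{2}(1-\epsilon)\Phi,
\]
which is a proper convex decomposition in $\mathrm{CCP}(\mcl{A},\B{\mcl{H}})$ whose summands $(1\pm\epsilon)\Phi$ are distinct from $\Phi$ (since $\Phi\neq 0$ and $\epsilon>0$). This contradicts the extremality of $\Phi$. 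The same scaling trick already appeared in the proof of Proposition \ref{prop-CCP-prime-ext-bounds}.

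No serious obstacle is anticipated: the entire argument reduces to the extreme-point structure of the operator interval $[0,I]$ together with a one-line scaling observation for the norm. The only mildly delicate point is bookkeeping of the ambient set at each step, so that the hypothesis of being extreme in $\mathrm{CP}^{(P)}$ (in the first containment) or in $\mathrm{CCP}$ (in the second) is applied in the correct direction.
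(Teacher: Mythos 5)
Your proof is correct and follows essentially the same route as the paper: the first inclusion via the fact that projections are extreme points of the operator interval $[0,I]$ (the paper cites \cite[Proposition 54.2]{Con00} where you sketch a direct argument), and the second via the downward transfer of extremality plus the same scaling trick $\Phi=\tfrac{1}{2}(1+\epsilon)\Phi+\tfrac{1}{2}(1-\epsilon)\Phi$, which is the paper's decomposition with $s=(1+\epsilon)\norm{P}$ and $t=(1-\epsilon)\norm{P}$.
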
 

\begin{proof}
    Let $P \in\B{\mcl{H}}$ be a projection and $\Phi \in \mathrm{CP}^{(P)}_{ext}(\mcl{A},\B{\mcl{H}})$. Suppose $\Phi = t\Phi_1 + (1-t)\Phi_2$ where $\Phi_1,\Phi_2 \in \mathrm{CCP}(\mcl{A},\B{\mcl{H}})$ and $t\in (0,1)$. Then, $P=t\Phi_1(1) + (1-t)\Phi_2(1)$ with $\Phi_1(1),\Phi_2(1)$ are positive contractions. Then by \cite[Proposition 54.2]{Con00} we must have $P=\Phi_1(1)=\Phi_2(1)$ so that $\Phi_1,\Phi_2 \in \mathrm{CP}^{(P)}(\mcl{A},\B{\mcl{H}})$. Furthermore, since $\Phi \in \mathrm{CP}^{(P)}_{ext}(\mcl{A},\B{\mcl{H}})$, we have $\Phi=\Phi_1=\Phi_2$. Hence, $\Phi \in \mathrm{CCP}_{ext}(\mcl{A},\B{\mcl{H}})$. This proves the first inclusion. 

    Now, to prove the second inclusion, let $\Phi\in \mathrm{CCP}_{ext}(\mcl{A},\B{\mcl{H}})$. Then, from the definition of linear extreme points, we observe that $\Phi\in\mathrm{CP}^{(P)}_{ext}(\mcl{A},\B{\mcl{H}})$, where $P=\Phi(1)$ is a positive contraction. If possible assume that $\norm{\Phi}=\norm{P}\in (0,1)$. Choose $s\neq t \in(0,1)\backslash\{\norm{P}\}$ such that $\norm{P}= \frac{1}{2}s + \frac{1}{2}t$. Then, $\Phi=\frac{1}{2}(\frac{s}{\norm{P}}\Phi) + \frac{1}{2}(\frac{t}{\norm{P}}\Phi)$ is a proper convex combination of $\mathrm{CCP}$ maps, which is not possible as $\Phi\in\mathrm{CCP}_{ext}(\mcl{A},\B{\mcl{H}})$. Hence $\norm{P}\in \{0,1\}$. This proves the second inclusion. 
\end{proof}

\begin{remark}
    In the above proposition, if we replace $\B{\mcl{H}}$ with an arbitrary commutative unital $C^*$-algebra $\mcl{B}$, then the first inclusion becomes an equality, i.e.,
    \begin{align*}
        \mathrm{CCP}_{ext}(\mcl{A,B})=\bigcup_{P=P^2=P^*}\mathrm{CP}^{(P)}_{ext}(\mcl{A,B}).
    \end{align*}
    We see this as follows. Assume that $\mcl{B}$ is commutative, and let $\Phi\in\mathrm{CCP}_{ext}(\mcl{A,B})$. Then, from the above proposition, $\Phi\in\mathrm{CP}_{ext}^{(P)}(\mcl{A,B})$ where $P=\Phi(1)\in\mcl{B}$ is a positive contraction of norm either zero or one. Note that $\Phi = \frac{1}{2}\Phi_1 + \frac{1}{2}\Phi_2$ where $\Phi_1= \mathrm{Ad}_{P^{\frac{1}{2}}}\circ\Phi , \Phi_2= \mathrm{Ad}_{(2-P)^{\frac{1}{2}}}\circ\Phi \in \mathrm{CCP}(\mcl{A,B})$.  Now, $\Phi \in\mathrm{CCP}_{ext}(\mcl{A,B})$ implies $\Phi = \Phi_1 = \Phi_2$ and therefore, $P$ is a projection. Thus, $\mathrm{CCP}_{ext}(\mcl{A,B})=\bigcup_{P=P^2=P^*}\mathrm{CP}^{(P)}_{ext}(\mcl{A,B})$. 
\end{remark}

\begin{example}
    The inclusions in Proposition \ref{prop-CCP-ext-bounds} can be possibly strict. To see this let $P\in\B{\mcl{H}}_+$ be such that $\norm{P}=1$, $P$ is not a projection and $\dim(\mcl{H})>1$. 
    \begin{enumerate}[label=(\roman*)]
        \item If $\Phi\in\mathrm{CCP}(\mcl{A},\B{\mcl{H}})$ is a pure CP-map with $\norm{\Phi}=1$, then as in Example \ref{eg-CCP-prime-CP-proper} (i), we get $\Phi\in\mathrm{CCP}_{ext}(\mcl{A},\B{\mcl{H}})$.  In particular, $\Phi:=\mathrm{Ad}_{P^{\frac{1}{2}}}$ is a pure CP-map on $\B{\mcl{H}}$ with $\norm{\Phi}=\norm{P}=1$,  and hence $\Phi\in\mathrm{CCP}_{ext}(\B{\mcl{H}}, \B{\mcl{H}})$. However, since $P$ is not a projection 
        $$\Phi\notin\bigcup_{P=P^*=P^2} \mathrm{CP}_{ext}^{(P)}(\B{\mcl{H}}, \B{\mcl{H}}).$$
        Thus in \eqref{eq-lin-extremes-bounds} the first inclusion can be strict. 
        \item Let $\psi: \mcl{A}\to \mbb{C}$ be a pure state and consider the CP-map $\Phi:\mcl{A} \to \B{\mcl{H}}$ defined by $\Phi(\cdot) := \psi(\cdot)P$. Then, by Proposition \ref{prop-PC-ext-non-empty}, we have $\Phi\in \mathrm{CP}^{(P)}_{ext}(\mcl{A,\B{H}})$. But, as in Example \ref{eg-CCP-prime-CP-proper} (ii), we can see that $\Phi\notin\mathrm{CCP}_{ext}(\mcl{A},\B{\mcl{H}})$. 
        This shows that in \eqref{eq-lin-extremes-bounds} the second inclusion can also be strict. 
    \end{enumerate}
\end{example}

\begin{remark}
 Let $P \in \B{\mcl{H}}$ be a positive contraction and $\psi: \mcl{A}\to \mbb{C}$ be a pure state.  We saw that  $\Phi(\cdot)=\psi(\cdot)P$ is an element of $\mathrm{CP}^{(P)}_{ext}(\mcl{A,\B{H}})$. But $\Phi \in \mathrm{CCP}_{ext}(\mcl{A,\B{H}})$ if and only if $P$ is a projection. For, if $P$ is a projection then by the above proposition $\Phi\in\mathrm{CP}^{(P)}_{ext}(\mcl{A},\B{\mcl{H}})\subseteq\mathrm{CCP}_{ext}(\mcl{A},\B{\mcl{H}})$. Now, if $P$ is not a projection, then as in Example \ref{eg-CCP-prime-CP-proper} (ii), we can see that $\Phi\notin\mathrm{CCP}_{ext}(\mcl{A},\B{\mcl{H}})$.
\end{remark}

\begin{corollary}\label{rmk-CCP-Cstar-linear-ext}
    If $\dim(\mcl{H}) < \infty$, then,
    \begin{align*}
        \mathrm{CCP}_{C^*-ext}(\mcl{A},\B{\mcl{H}}) \subseteq \mathrm{CCP}_{ext}(\mcl{A},\B{\mcl{H}})
    \end{align*}
\end{corollary}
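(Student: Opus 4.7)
The plan is to chain together three results already established in the paper: Theorem \ref{thm-CCP-C-star-ext}, Proposition \ref{prop-Cstar-linear-ext-finite}, and the first inclusion in Proposition \ref{prop-CCP-ext-bounds}. Each is stated for finite-dimensional $\mcl{H}$ (or for arbitrary $P \in \B{\mcl{H}}_+$), which is precisely the setting of the corollary.

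Here is how the argument should go. Let $\Phi \in \mathrm{CCP}_{C^*-ext}(\mcl{A},\B{\mcl{H}})$. Since $\dim(\mcl{H}) < \infty$, Theorem \ref{thm-CCP-C-star-ext} gives
\begin{align*}
  \Phi \in \bigcup_{P=P^2=P^*}\mathrm{CP}^{(P)}_{C^*-ext}(\mcl{A},\B{\mcl{H}}),
\end{align*}
so there is a projection $P\in\B{\mcl{H}}$ with $\Phi \in \mathrm{CP}^{(P)}_{C^*-ext}(\mcl{A},\B{\mcl{H}})$. Next, Proposition \ref{prop-Cstar-linear-ext-finite} (applied to this $P$) upgrades $P$-$C^*$-extremality to linear extremality within $\mathrm{CP}^{(P)}(\mcl{A},\B{\mcl{H}})$, i.e.\ $\Phi \in \mathrm{CP}^{(P)}_{ext}(\mcl{A},\B{\mcl{H}})$. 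Finally, the first inclusion of Proposition \ref{prop-CCP-ext-bounds} tells us that a linear extreme point of $\mathrm{CP}^{(P)}(\mcl{A},\B{\mcl{H}})$ with $P$ a projection is automatically a linear extreme point of the larger convex set $\mathrm{CCP}(\mcl{A},\B{\mcl{H}})$, so $\Phi \in \mathrm{CCP}_{ext}(\mcl{A},\B{\mcl{H}})$.

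There is no real obstacle here, as the three supporting results have already done all the work; the corollary is simply the composition of the three inclusions
\begin{align*}
  \mathrm{CCP}_{C^*-ext}
    \;=\; \bigcup_{P=P^2=P^*}\mathrm{CP}^{(P)}_{C^*-ext}
    \;\subseteq\; \bigcup_{P=P^2=P^*}\mathrm{CP}^{(P)}_{ext}
    \;\subseteq\; \mathrm{CCP}_{ext}.
\end{align*}
The write-up should therefore be short — essentially one sentence citing the three results in the order above — and I would not introduce any additional constructions.
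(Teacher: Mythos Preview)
Your proposal is correct and matches the paper's own proof exactly: the paper also derives the corollary by citing Theorem \ref{thm-CCP-C-star-ext}, Proposition \ref{prop-Cstar-linear-ext-finite}, and Proposition \ref{prop-CCP-ext-bounds} in precisely this order.
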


\begin{proof}
    This follows from Theorem \ref{thm-CCP-C-star-ext}, along with Propositions \ref{prop-Cstar-linear-ext-finite} and \ref{prop-CCP-ext-bounds}.
\end{proof}

\section*{Acknowledgment}
The first author is supported by a CSIR fellowship (File No. 09/084(0780)/2020-EMR-I). The second author receives partial support from the IoE Project of MHRD (India) with reference no SB22231267MAETWO008573,  and is also partially funded by the NBHM grant (No. 02011/10/2023 NBHM (R.P) R\&D II/4225).

\bibliographystyle{alpha}

\end{document}